\newcommand{\rS}{\mathrm{S}}
\newcommand{\rJ}{\mathrm{J}}
\newcommand{\rT}{\mathrm{T}}
\newcommand{\rI}{\mathrm{I}}
\newcommand{\RR}{\mathbb{R}}
\begin{document}
 \title{On anisotropic non-Lipschitz restoration model: lower bound theory and convergent algorithm\thanks{Received date, and accepted date (The correct dates will be entered by the editor).}}


          \author{Chunlin Wu\thanks{School of Mathematical Sciences, Nankai University, Tianjin 300071,
          		China (wucl@nankai.edu.cn, 1120200026@mail.nankai.edu.cn).}
          \and Xuan Lin\footnotemark[2]
      \and Yufei Zhao\footnotemark[2]\thanks{Corresponding author. School of Mathematical Sciences, Nankai University, Tianjin 300071,	China (matzyf@nankai.edu.cn).}
  }

         \pagestyle{myheadings}
         \markboth{ANISOTROPIC NON-LIPSCHITZ MODEL: LOWER BOUND THEORY AND ALGORITHM}{CHUNLIN WU, XUAN LIN, YUFEI ZHAO} 
         \maketitle

          \begin{abstract}
       For nonconvex and nonsmooth restoration models, the lower bound theory reveals their good edge recovery ability, and related analysis can help to design convergent algorithms.	Existing such discussions are focused on isotropic regularization models, or only the lower bound theory of anisotropic model with a quadratic fidelity. 
       In this paper, we consider a general image recovery model with a non-Lipschitz anisotropic composite regularization term and an $\ell_q$ norm ($1\leq q<+\infty$) data fidelity term.
       We establish the lower bound theory for the anisotropic model with an $\ell_1$ fidelity, which applies to impulsive noise removal problems.
       For the general case with
       $1\leq q<+\infty$, a support inclusion analysis is provided. To solve this non-Lipschitz composite minimization model,
       we are then motivated to introduce a support shrinking strategy in the iterative algorithm and relax the support constraint to a thresholding support constraint, which is more computationally practical.
       The objective function at each iteration is also linearized to construct a strongly convex subproblem. To make the algorithm more implementable,
       we compute an approximation solution to this subproblem at each iteration, but not exactly solve it.
       The global convergence result of the proposed inexact iterative thresholding and support shrinking algorithm with proximal linearization is established. The experiments on image restoration and two stage image segmentation demonstrate the effectiveness of the proposed algorithm.
          \end{abstract}
\begin{keywords}  image restoration; anisotropic model; non-Lipschitz optimization; lower bound theory;  thresholding; support shrinking
\end{keywords}

 \begin{AMS} 49K30; 49N45; 90C26; 94A08
\end{AMS}
          \section{Introduction}\label{intro}
          		We consider the problem of image recovery, i.e., trying to find the unknown ground truth $\underline{x}\in\RR^N$ from the degraded observation $b\in\RR^M$ as follows
	$$
	b=A\underline{x}+n.
	$$
	Herein, $M$ and $N$ are positive integers, $n\in\RR^M$ represents the noise,
	and the matrix $A\in\RR^{M\times N}$ is related to the information acquisition process, e.g., an identity matrix for the image denoising problem, a convolution matrix generated by blur kernel for image blurring, and a measurement matrix in the problem of compressed sensing. 
	This linear inverse problem is usually ill posed.
	To solve such a problem, we consider the following 
	minimization model
	
	\begin{equation}\label{eqn:lqlp1}
	\min_{x\in\RR^N} \sum_{i\in \rJ}\phi(|G_i^\top x|)+\frac{\beta}{q}\|Ax-b\|_q^q,
	\end{equation}
	where  $1\leq q<\infty$, $\phi$ is a potential function satisfying certain properties,
	$\rJ$ is an index set, $\{ G_i\}_{i\in \rJ}\subset\RR^N$ are the columns of matrix $G\in\RR^{N\times\sharp\rJ}$ which can be a set of sparsifying operators
	(e.g., the discrete gradient operators or the atoms in a wavelet transform),
	and $G_i^\top x$ denotes the difference of each pixel with its neighbors.
	As one can see, the $\ell_q$ norm $\|\cdot\|_q$ $(1\leq q<+\infty)$ is adopted in the data fidelity term of model \eqref{eqn:lqlp1}
	and
	it can be used to handle different additive noises by choosing different values of $q$ from a maximum \textit{a posterior} (MAP) derivation.
	For example, people use the squared $\ell_2$ norm  fidelity
	to deal with the white Gaussian noise (\cite{Bouman1993A,Chen2001Atomic,candes2006stable}), and the $\ell_1$ norm fidelity for
	 impulse noise removal (\cite{nikolova2004a,granai2005,rodriguez2008,Fuchs2009Fast}).
	Meanwhile, the 
	energy of the discrete derivatives of $x$ is penalized in the regularization term and the potential functions $\phi$ considered in this paper are nonconvex and non-Lipschitz.
	
	Nonconvex and nonsmooth regularizers have the advantages in finding sparse solutions than convex ones and they can help to preserve or generate neat edges in restored images (\cite{nikolova2005analysis,chartrand2007exact,Foucart2009Sparsest}).
	The very useful edge property of nonconvex and nonsmooth regularization in image restoration is due to the lower bound theory, which is to provide a uniform lower bound of nonzero image differences of local minimizers.
	One question we are interested in about model \eqref{eqn:lqlp1} is whether it satisfies the lower bound theory.
The other crucial question we are concerned about is how to design efficient and convergent algorithms for solving this model.
	Compared to the convex minimization model,
	the difficulty for solving model \eqref{eqn:lqlp1} arises mainly from the possible nonsmoothness of the fidelity term, the nonconvex and nonsmooth property of the regularization term, as well as the composition of $\phi$ and $G_i$'s, which makes the model a composite optimization problem.
	These properties make it a non-trivial task for solving the minimization model \eqref{eqn:lqlp1}.
	In the following two paragraphs, we review existing works studying these two problems.

	The lower bound theory is theoretical and has been studied in
	the literatures; see, e.g., \cite{nikolova2005analysis,nikolova2008Efficient,nikolova2010fast,chen2012non,2018fengOn,zeng2019nonlip,liu2019a,2019zengOn}.
	In general, the key to derive the lower bound theory is to apply the inequality derived from the optimality condition to some carefully constructed testing vectors.
For the model with an anisotropic regularizer as given in \eqref{eqn:lqlp1}, only the case of a squared $\ell_2$ norm fidelity term has been considered; see the pioneering work \cite{nikolova2005analysis}, or \cite{chen2012non} where box constraints were added to the minimization model.
	The technique used in \cite{nikolova2005analysis} and \cite{chen2012non} is based on the second order optimality condition, which cannot be applied to model \eqref{eqn:lqlp1} with other fidelity terms.
	Recently, the authors of \cite{zeng2018edge} and \cite{zeng2019nonlip} discussed the lower bound theory of models with non-Lipschitz isotropic regularization and different fidelities. This theory for nonconvex and nonsmooth isotropic models with box constraints was established in \cite{2019zengOn} and \cite{2018fengOn}, the latter of which focuses on the case of $\ell_0$ "norm" potential function. The derivation in \cite{zeng2018edge,zeng2019nonlip,2019zengOn,2018fengOn}  is based on the conservation of image gradient fields or the image-gradient relationship, and at the same time the discussion is restricted to the case that the regularization quantity corresponding to each pixel is only one.
	These limitations make such techniques difficult to be generalized to a regularization with an arbitrary sparsifying system.
			Also, the existing construction of testing vectors is either only suitable for the second order optimality condition, or restricted to the case of the isotropic first order regularization.
	Indeed, the lower bound theory of the anisotropic model \eqref{eqn:lqlp1} with non-quadratic fidelity terms has not been studied in existing works.

	For nonconvex and nonsmooth composite minimization models, the other crucial problem is how to design effective and convergent algorithms to solve them.
	One straightforward approach is to utilize variable splitting and alternating direction method of multipliers (ADMM) \cite{glowinski1989augmented} or equivalently, split Bregman iteration \cite{goldstein2009the}, but
	their convergence for nonconvex optimization problems is only guaranteed under specific assumptions like surjectiveness on the linear operator \cite{li2015global,hong2016convergence,wang2019global}, which do not hold in 2D image restoration models with gradient or higher order sparsifying systems.
	In the existing studies, one popular class of approaches are smoothing approximation methods, e.g. \cite{chen2012smoothing,bian2015linearly,chen2013optimality,hintermuller2014a,chan2014half}, in which smoothing functions with auxiliary parameters are utilized to approximate the original objective function.
	These include the smoothing gradient method (\cite{chen2012smoothing}), the smoothing quadratic regularization (SQR) method (\cite{bian2015linearly}), the smoothing trust region Newton method (\cite{chen2013optimality}) and the $R$-regularized Newton scheme (\cite{hintermuller2014a}) both for twice differentiable
	fidelity terms, the half-quadratic technique based method (\cite{chan2014half}) for specific potential function $\phi(t)=t^p$ with $0<p,q\leq2$.
In such methods, it was usually only proved that the iterative sequence converges to a stationary point of the smoothed variational model, or satisfies the subsequence convergence result by letting the smoothing parameter tend to zero.
Another class of approaches is the iteratively reweighted methods, which concentrate on the special case of $\phi(t)=t^p$.
Under the assumption that $0<p,q\leq 2$, the iterative reweighted norm (IRN) algorithm (\cite{rodriguez2009efficient}) and the generalized Krylov subspace method for $\ell_p$-$\ell_q$ minimization (GKSpq, \cite{lanza2015krylov}) are proposed,
where the $\ell_p$ (or $\ell_q$) term is approximated by a weighted $\ell_2$ norm with iteratively updated weighting matrices.
However, the convergence analysis of iterate sequences in \cite{rodriguez2009efficient} and \cite{lanza2015krylov} 
only focuses on the case of $1\leq p,q\leq 2$, which is not applicable to the more interesting case with nonconvex regularization terms.
The last approach we mention here is a support shrinking strategy derived respectively for various signal recovery problems \cite{xue2019efficient,liunew,feng2020the} and different image restoration models with isotropic first order regularization \cite{zeng2019iterative,zeng2019nonlip,zheng2020a,Gao2021a}, yielding some (two-loop) iterative algorithms with either exact or inexact inner loops, when incorporated with the iteratively reweighted $\ell_1$ \cite{Candes2008IRL1,Lai2009IRL1} or least squares \cite{Daubechies2010IRLS,Lai2013IRLS}.
During a current study on nonconvex wavelet image restoration, we got aware that a similar support shrinking strategy was also induced from a majorization-minimization framework for non-Lipschitz synthesis wavelet model \cite{Figueiredo2007Majorization}, but without convergence result.
In such works with convergence results,
the proof techniques for non-Lipschitz composite optimization problems all use the conservation of image gradient fields, which cannot be used to composite models with the anisotropic regularization by a general sparsifying system.
To our best knowledge, existing algorithms for anisotropic composite minimization models have no proved global sequence convergence to a stationary point of the original objective function.

As can be seen, the research on the lower bound theory and related algorithm study for nonconvex anisotropic regularization models is very limited.
Also, existing related derivation techniques cannot be applied to analyze anisotropic regularization models with non-quadratic fidelities, or iterative algorithms solving a general anisotropic model.
In this paper, we aim to analyze and solve the general non-Lipschitz anisotropic regularization model \eqref{eqn:lqlp1}.
	The key technique we introduce is a new construction of the testing vectors,
	based on which, the first order optimality condition can be applied to prove a lower bound theory and to analyze the algorithm in the situation of anisotropic regularization.
	The main contributions of this paper can be summarized as follows.
	\begin{itemize}
		\item[(1)]
		We show a lower bound theory for the composite minimization model \eqref{eqn:lqlp1} with an $\ell_1$ data fitting term, which is useful for impulse noise removal.
		For the general case with 
		$1\leq q<\infty$, a support inclusion analysis is provided.
		Such theoretical results apply to anisotropic  regularization by a general sparsifying system. They help us not only understand more about the model, but also to design iterative algorithms.
		\item[(2)] We propose an inexact version of an iterative thresholding and support shrinking algorithm with proximal linearization to solve model \eqref{eqn:lqlp1}. At each iteration, the algorithm thresholds the differences to determine the support, and construct a strongly convex problem, which can be easily solved.
This makes the algorithm more practical and computationally efficient. The sequence convergence of the generated iterates is also
		established.
	\end{itemize}

	The remainder of this paper is organized as follows.
	In Section \ref{sec:lowerbound}, we investigate the lower bound theory and support inclusion property of the non-Lipschitz anisotropic model \eqref{eqn:lqlp1}.
	Based on these conclusions, in Section \ref{sec:alg}, we introduce an iterative algorithm with the strategy of thresholding and support shrinking.
	Then we consider an inexact version of the algorithm and establish its convergence result. We present the details of algorithm implementation in Section \ref{sec:alg-detail}.
	In Section \ref{sec:experiment}, the experiments on image deconvolution and two stage image segmentation are carried out to exhibit the usefulness of the proposed algorithm.
	We conclude this paper in Section \ref{sec:con}.

	We now give some notations, which will be used throughout this paper.
	For a set $\rS$, let $\sharp \rS$ or $|\rS|$ denote its cardinality.
	For a matrix $B$, we use $B_i$ to denote the $i$'th column of $B$,  use $B^\top$ and $B_i^\top$ to denote the transposes of $B$ and $B_i$, respectively.  For a vector $x\in\RR^N$, $x_l$ is the $l$'th entry of $x$, $1\leq l\leq N$.
	Given $x\in\RR^N$, the $\ell_p$ (quasi-)norm $\|x\|_p$ with $p\in(0,+\infty)$ is defined by $\|x\|_p=\left(\sum_{1\leq l\leq N}|x_l|^p\right)^{1/p}$.
	We denote the set of indices corresponding to nonzero sparsifying coefficients $G_i^\top x$ of $x$ as the coefficient support (or, abbreviated as support) $\rS(x)$, i.e., $\rS(x)=\{ i\in \rJ:G_i^\top x\neq0 \}$.
The set of indices corresponding to differences $G_i^\top x$ whose absolute values are larger than $\tau$ ($\tau\geq0$), is called the coefficient $\tau$-support (or, abbreviated as $\tau$-support) of $x$ and written as
$\rT(x)=\{ i\in \rJ:|G_i^\top x|>\tau \}$.
Specifically, if taking $\tau=0$, $\rT(x)$ degenerates to $\rS(x)$.

          \section{Lower bound theory and support inclusion analysis}\label{sec:lowerbound}

	For the convenience of description, we denote
	\begin{equation}\label{eqn:oflqlp}
	\mathcal{F}(x):=\sum_{i\in \rJ}\phi(|G_i^\top x|)+\frac{\beta}{q}\|Ax-b\|_q^q,
	\end{equation}
	for our considered model \eqref{eqn:lqlp1}, and rewrite it as
    \begin{equation}\label{eqn:lqlp}
	(\mathfrak{F})\quad\min_{x\in\RR^N} \mathcal{F}(x).
	\end{equation}

	The potential function $\phi$ is supposed to satisfy the general assumptions.
	\begin{assumption}\label{thm:xp}
		\begin{itemize}
			\item[(a)] $\phi:[0,+\infty)\to[0,+\infty)$ is a continuous concave coercive function, 
			and $\phi(0)=0$.
			\item[(b)] $\phi$ is $C^1$ on $(0,+\infty)$, $\phi^\prime(t)|_{(0,+\infty)}>0$ and $\phi^\prime(0+)=+\infty$.
			\item[(c)] For any $\alpha>0$, $\phi^\prime$ is $L_\alpha$-Lipschitz continuous on $[\alpha,+\infty)$, i.e., there exists a constant $L_\alpha$ determined by $\alpha$, such that for any $t,s\in[\alpha,+\infty)$, $|\phi^\prime(t)-\phi^\prime(s)|\leq L_\alpha|t-s|$.
		\end{itemize}
	\end{assumption}

\begin{example}
	Two examples of potential functions satisfying Assumption \ref{thm:xp} are $\phi_1(t)=t^p$ ($0<p<1$) and $\phi_2(t)=\log(1+t^p)$ $(0<p<1)$ (\cite{zeng2019iterative,zheng2020a}).
	
	\end{example}

	By Assumption~\ref{thm:xp}, 
	the subdifferential of $\phi(|t|)$ at $t$ is given by
	$$
	\partial\phi\circ|\cdot|(t)=
	\begin{cases}
	(-\infty,+\infty),&\quad\text{if }t=0,\\
	\frac{t}{|t|}\phi^\prime(|t|) ,&\text{otherwise},
	\end{cases}
	$$
	and $\phi(|t|)$ is subdifferentially regular (\cite{rockafellar2009variational}) at any $t\in\RR^N$.

	Throughout this paper, $A$ and $G$ are assumed to satisfy the following basic property, which is trivial in image restoration problems \cite{Wang2008,Wu2010Augmented}.

	\begin{assumption}\label{assumption:ag}
	$\ker A\cap\ker G^\top=\{ 0\}$.	
	\end{assumption}

	Following the arguments in existing works, e.g., \cite{zheng2020a} and \cite{lou2015weighted}, one can show the coercive property of the objective function $\mathcal{F}(x)$.

	\begin{theorem}\label{thm:coercive}
		Suppose that Assumption \ref{thm:xp} and Assumption \ref{assumption:ag} hold true. Then the function $\mathcal{F}(x)$ in \eqref{eqn:oflqlp} is coercive and thus \eqref{eqn:lqlp} has at least one solution.
	\end{theorem}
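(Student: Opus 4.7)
The plan is to establish coercivity of $\mathcal{F}$ on $\RR^N$ and then invoke Weierstrass; continuity of $\mathcal{F}$ follows from the continuity of $\phi$ on $[0,+\infty)$ stated in Assumption~\ref{thm:xp}(a), of $\|\cdot\|_q^q$ for $1\leq q<+\infty$, and of the linear maps $A$ and $G^\top$.

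First I would introduce the auxiliary quantity
\[
\nu(x):=\|Ax\|_q+\max_{i\in\rJ}|G_i^\top x|.
\]
It is nonnegative, positively homogeneous, and subadditive. By Assumption~\ref{assumption:ag}, $\nu(x)=0$ forces $Ax=0$ and $G^\top x=0$, hence $x=0$. Thus $\nu$ is a norm on $\RR^N$, and by the equivalence of norms in finite dimension there exists $C>0$ with $\nu(x)\geq C\|x\|_2$ for every $x$. In particular, if $\|x\|_2\to\infty$ then $\max\{\|Ax\|_q,\ \max_{i\in\rJ}|G_i^\top x|\}\to\infty$.

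Next I would bound the two summands of $\mathcal{F}$ from below. Nonnegativity of $\phi$ (Assumption~\ref{thm:xp}(a)) lets me keep only the largest term of the regularizer, giving
\[
\sum_{i\in\rJ}\phi(|G_i^\top x|)\ \geq\ \phi\!\left(\max_{i\in\rJ}|G_i^\top x|\right),
\]
while the reverse triangle inequality yields
\[
\tfrac{\beta}{q}\|Ax-b\|_q^q\ \geq\ \tfrac{\beta}{q}\bigl(\|Ax\|_q-\|b\|_q\bigr)_+^q.
\]
Since $\phi$ is coercive on $[0,+\infty)$ (Assumption~\ref{thm:xp}(a)) and $t\mapsto t_+^q$ is coercive for $q\geq 1$, combining these bounds with the divergence of $\max\{\|Ax\|_q,\max_{i\in\rJ}|G_i^\top x|\}$ along any norm-divergent sequence produces $\mathcal{F}(x)\to+\infty$ as $\|x\|_2\to\infty$.

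I do not anticipate a serious obstacle: the only real content is recognizing that Assumption~\ref{assumption:ag} promotes $\nu$ from a mere seminorm to a genuine norm, which is precisely what couples the $Ax$-coercivity of the fidelity term to the $G^\top x$-coercivity of the regularizer. Coercivity together with continuity of $\mathcal{F}$ then yields a minimizer via restriction to any nonempty sublevel set.
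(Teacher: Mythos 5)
Your argument is correct. Note that the paper itself does not write out a proof of Theorem~\ref{thm:coercive}: it only remarks that coercivity follows ``following the arguments in existing works'' and points to \cite{zheng2020a} and \cite{lou2015weighted}. The argument in such works is typically phrased via the orthogonal decomposition $x=u+v$ with $u\in\ker G^\top$ and $v\in(\ker G^\top)^\perp$: boundedness of $\mathcal{F}$ along a sequence forces $G^\top x$ (hence $v$) and $Ax$ to stay bounded, and the fact that $A$ is injective on $\ker G^\top$ --- which is exactly the content of Assumption~\ref{assumption:ag} --- then bounds $u$ as well. Your packaging of the same mechanism, namely observing that $\nu(x)=\|Ax\|_q+\max_{i\in\rJ}|G_i^\top x|$ is promoted from a seminorm to a genuine norm by Assumption~\ref{assumption:ag} and then invoking equivalence of norms on $\RR^N$, is a clean and slightly more economical route that avoids the decomposition entirely. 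The one step worth spelling out a little more is the final limit: since the two lower bounds $\phi\bigl(\max_{i\in\rJ}|G_i^\top x|\bigr)$ and $\tfrac{\beta}{q}\bigl(\|Ax\|_q-\|b\|_q\bigr)_+^q$ are each nonnegative and each a coercive function of its own argument, their sum exceeds any prescribed level as soon as the larger of the two arguments is large enough, which is precisely what divergence of $\max\{\|Ax\|_q,\max_{i\in\rJ}|G_i^\top x|\}$ guarantees. With that made explicit, the coercivity claim and the Weierstrass conclusion are complete.
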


In this section, we focus on discussing the lower bound theory and related support inclusion analysis for the
model \eqref{eqn:lqlp1}, i.e., the minimization problem \eqref{eqn:lqlp}.
In \cite{nikolova2005analysis}, the authors studied the lower bound theory for a special case of \eqref{eqn:lqlp} with $q=2$, i.e.,
	\begin{equation}\label{thm:lpl2}
\min_{x\in\RR^N} \sum_{i\in \rJ}\phi(|G_i^\top x|)+\frac{\beta}{2}\|Ax-b\|_2^2.
\end{equation}
To establish such a conclusion, the second order derivative of $\phi$ needs to be considered.
	We revise the conclusion in \cite{nikolova2005analysis} to Theorem \ref{thm:lbl2} below, in which the requirement on $\phi$ is  modified to be consistent with Assumption \ref{thm:xp}.
\begin{theorem}\label{thm:lbl2}
Let $\phi$ be the potential function satisfying Assumption~\ref{thm:xp} and the followings:
	$\phi$ is $C^2$ on $(0,+\infty)$,
	$\phi''$ is increasing on $(0,+\infty)$ with $\phi''(t)\leq0$ for any $t>0$, $\phi''(0^+)=-\infty$ and $\lim\limits_{t\to+\infty}\phi''(t)=0$.
Then there exists a constant 
$\theta_2>0$, such that
for any local minimizer $x^\ast$  of model \eqref{thm:lpl2} (i.e., the problem \eqref{eqn:lqlp} with $q=2$),
the difference $G_i^\top x^\ast$ is either zero or satisfies $|G_i^\top x^\ast|\geq \theta_2$. 
\end{theorem}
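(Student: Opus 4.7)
The plan is to adapt the classical Nikolova-style second-order argument to the anisotropic setting by restricting the analysis to the subspace of directions that preserve the vanishing pattern of the sparsifying coefficients of $x^\ast$. Let $x^\ast$ be any local minimizer of \eqref{thm:lpl2}, fix $i\in\rJ$ with $G_i^\top x^\ast\neq 0$, set $\rS=\rS(x^\ast)$, and define
\[
V := \{\, v\in\RR^N : G_j^\top v = 0 \text{ for all } j\in\rJ\setminus\rS\,\}.
\]
The first thing I would check is that $V$ contains some $v$ with $G_i^\top v\neq 0$. Otherwise $G_i$ would lie in $\mathrm{span}\{G_j:j\notin\rS\}$; writing $G_i=\sum_{j\notin\rS}c_j G_j$ and pairing with $x^\ast$ would yield $G_i^\top x^\ast = \sum_{j\notin\rS}c_j(G_j^\top x^\ast)=0$, contradicting $i\in\rS$.

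For such $v$ and $|t|$ small, $G_j^\top(x^\ast+tv)=0$ persists for $j\notin\rS$, while $G_j^\top(x^\ast+tv)$ keeps its sign away from zero for $j\in\rS$. Hence $t\mapsto\mathcal{F}(x^\ast+tv)$ is $C^2$ near $t=0$ (the off-support contributions are constant equal to $\phi(0)=0$), and the standard second-order necessary condition for a local minimum gives
\[
\sum_{j\in\rS}\phi''(|G_j^\top x^\ast|)(G_j^\top v)^2 + \beta\|Av\|_2^2 \geq 0, \qquad \forall v\in V.
\]
Since $\phi''\leq 0$ on $(0,+\infty)$, every summand is non-positive, so dropping all but the $j=i$ term only strengthens the negativity, yielding
\[
|\phi''(|G_i^\top x^\ast|)|\,(G_i^\top v)^2 \leq \beta\,\|Av\|_2^2, \qquad \forall v\in V.
\]
Taking the infimum over $v\in V$ with $G_i^\top v=1$ and inverting $|\phi''|$, which is continuous and strictly decreasing on $(0,+\infty)$ with $|\phi''|(0^+)=+\infty$ and $|\phi''|(+\infty)=0$, one extracts a strictly positive lower bound on $|G_i^\top x^\ast|$ depending only on the data $(\rS,i,A,G,\phi,\beta)$. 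Because there are only finitely many pairs $(\rS,i)$ with $\rS\subseteq\rJ$ and $i\in\rS$, taking the minimum of the resulting bounds produces a single uniform constant $\theta_2>0$.

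The step I expect to require the most care is ruling out the borderline situation in which $\inf_{v\in V,\,G_i^\top v=1}\|Av\|_2^2=0$ for some support $\rS$ actually realized by a local minimizer: the displayed inequality would then force $|\phi''(|G_i^\top x^\ast|)|=0$, hence $|G_i^\top x^\ast|=+\infty$, which is impossible at a finite local minimizer, so the configuration cannot occur. Turning this into a clean dichotomy between ``infimum positive, yielding a usable bound'' and ``infimum zero, contradicting the existence of the minimizer'' is the key technical moment, and I expect it to rely on Assumption~\ref{assumption:ag} together with the coercivity of $\mathcal{F}$ from Theorem~\ref{thm:coercive} to prevent minimizing sequences from escaping to infinity. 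The remainder is routine packaging of the algebraic bound over the finitely many support configurations.
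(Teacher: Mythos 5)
Your argument is correct and is essentially the proof the paper intends: the paper itself only writes ``similar to \cite{nikolova2005analysis}'', and that reference's technique is exactly your restriction to the subspace $V$ preserving the zero pattern, the one-dimensional second-order necessary condition along $v\in V$, the sign argument dropping all but the $j=i$ term, and the finite union over supports. The only remark worth making is that your feared borderline case is vacuous: since $\phi''(0^+)=-\infty$, the threshold $\inf\{t>0:|\phi''(t)|\leq\beta\gamma(\rS,i)\}$ is strictly positive for \emph{any} finite $\gamma(\rS,i)\geq 0$ (including $\gamma=0$), so no appeal to Assumption~\ref{assumption:ag} or coercivity is needed there.
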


\begin{proof}
	The proof is similar to that in \cite{nikolova2005analysis}.
\end{proof}

Theorem \ref{thm:lbl2} is for the case of Gaussian measurement noise. Another common and important situation is the impulse noise removal, and the corresponding model is as follows,
	\begin{equation}\label{eqn:model1}
	\min_{x\in\RR^N} \sum_{i\in \rJ}\phi(|G_i^\top x|)+\beta\|Ax-b\|_1,
	\end{equation}
	which is the $q=1$ case of \eqref{eqn:lqlp}. The key technique for proving Theorem~\ref{thm:lbl2} is the second order necessary condition on local minimizers with the corresponding testing vector construction \cite{nikolova2005analysis}, which cannot be applied to deal with the model \eqref{eqn:model1}.
	For the model \eqref{eqn:model1}, we will instead establish a lower bound theory on the stationary points (including local minimizers) by using the first order optimality  condition with a new construction of testing vector.

\begin{theorem}\label{thm:local-min-lowerbd}
There exists a constant $\theta_1>0$, such that for any stationary point $x^\ast$ of \eqref{eqn:model1} (i.e., the problem \eqref{eqn:lqlp} with $q=1$), it satisfies
$$
\text{either }G_i^\top  {x}^\ast=0\text{ or }|G_i^\top  {x}^\ast|\geq\theta_1,\quad\forall i\in\rJ.
$$
\end{theorem}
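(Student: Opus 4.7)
The plan is to exploit the first-order stationarity $0 \in \partial \cF(x^*)$ via the subdifferential sum rule (valid by the regularity of $\phi(|\cdot|)$ stated just after Assumption~\ref{thm:xp}). Since $\phi'(0+) = +\infty$ makes $\partial (\phi\circ|\cdot|)(0) = \RR$, there exist free multipliers $\eta_i \in \RR$ for $i \notin \rS(x^*)$ and some $\zeta \in \RR^M$ with $\|\zeta\|_\infty \leq 1$ such that
\begin{equation*}
\sum_{i \in \rS(x^*)} G_i \phi'(|G_i^\top x^*|)\, \sgn(G_i^\top x^*) + \sum_{i \notin \rS(x^*)} G_i \eta_i + \beta A^\top \zeta = 0.
\end{equation*}
I would eliminate the unknown $\eta_i$ by pairing with a test vector $v \in Z := \{v \in \RR^N : G_i^\top v = 0,\; \forall i \notin \rS(x^*)\}$; the middle sum collapses and one obtains
\begin{equation*}
\Bigl|\sum_{i \in \rS(x^*)} \phi'(|G_i^\top x^*|)\, \sgn(G_i^\top x^*) \, (G_i^\top v)\Bigr| \leq \beta \|Av\|_1, \qquad \forall v \in Z.
\end{equation*}

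The novelty sits in the choice of $v$. For any fixed $i_0 \in \rS(x^*)$, writing $s_i := \sgn(G_i^\top x^*)$, I would search within $Z$ for a $v$ additionally verifying the sign-matching system $s_i(G_i^\top v) \leq 0$ for every $i \in \rS(x^*)$ together with the normalization $s_{i_0}(G_{i_0}^\top v) = -1$. The rescaled reflection $v_0 := -x^*/|G_{i_0}^\top x^*|$ is a natural feasibility witness: it lies in $Z$ because $G_i^\top x^* = 0$ for $i \notin \rS(x^*)$, it normalizes correctly at $i_0$, and on $\rS(x^*)$ satisfies $s_i(G_i^\top v_0) = -|G_i^\top x^*|/|G_{i_0}^\top x^*| \leq 0$. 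For any feasible $v$, every summand $\phi'(|G_i^\top x^*|)\,s_i(G_i^\top v)$ is nonpositive and the $i_0$-summand equals $-\phi'(|G_{i_0}^\top x^*|)$, so the bracket above exceeds $\phi'(|G_{i_0}^\top x^*|)$ in absolute value; the optimality estimate then collapses into
\begin{equation*}
\phi'(|G_{i_0}^\top x^*|) \leq \beta \|Av\|_1.
\end{equation*}

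To turn this into an $x^*$-free bound I would minimize $\|Av\|_1$ over the polyhedron cut out by the sign-matching system. This is a linear program whose data involves only $A$, $G$, and the discrete triple $(\rS,i_0,s)$; its optimal value $V^\ast(\rS,i_0,s)$ is therefore a constant independent of $x^*$, finite thanks to the witness $v_0$. Since the triples $(\rS,i_0,s)$ with $\rS \subseteq \rJ$, $i_0 \in \rS$, and $s \in \{\pm 1\}^{|\rS|}$ form a finite set, $C := \max V^\ast$ over feasible triples is a finite constant depending only on $A$, $G$, $\beta$. Any triple yielding $V^\ast = 0$ cannot arise from a stationary point, since it would force $\phi'(|G_{i_0}^\top x^*|) \leq 0$; hence $\phi'(|G_{i_0}^\top x^*|) \leq \beta C$ whenever $i_0 \in \rS(x^*)$, and inverting the non-increasing $\phi'$ (with $\phi'(0+) = +\infty$) yields the uniform lower bound $\theta_1 := \inf\{t > 0 : \phi'(t) \leq \beta C\} > 0$. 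The main technical obstacle I anticipate is producing the testing vector without extra rank or incoherence hypotheses on $G$; the rescaled reflection $v_0$ (rather than any Lagrange- or pseudo-inverse-based construction) is what resolves this cleanly, with Assumption~\ref{assumption:ag} keeping the underlying optimality condition nondegenerate.
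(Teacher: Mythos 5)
Your proposal is correct and follows essentially the same route as the paper: both derive the first-order inequality by pairing the stationarity condition with test vectors in $K(\rS(x^\ast))$, both use the sign-matched polyhedron with the rescaled copy of $x^\ast$ as the feasibility witness so that all summands share a sign and the $i_0$-term is isolated, and both obtain an $x^\ast$-independent constant by maximizing over the finitely many support/sign configurations. The only cosmetic difference is that you minimize $\|Av\|_1$ directly as a linear program, whereas the paper minimizes $\|v\|_2$ over the closure of the sign-matched set and then bounds $\|Av\|_1\leq\alpha\|v\|_2$ by Cauchy--Schwarz; your variant is marginally tighter but not a different argument.
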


\begin{proof}
Suppose that ${x}^\ast$ is a stationary point of \eqref{eqn:model1}. Define
$$
{\rS}^\ast=\rS(x^\ast)=\left\{ i\in \rJ: G_i^\top  {x}^\ast\neq0 \right\} \text{ and }
K(\rS^\ast)=\left\{ x\in\RR^N:\  G_i^\top x=0\ \forall i\in \rJ\backslash \rS^\ast \right\}.
$$

Since ${x}^\ast$ is a stationary point of $\mathcal{F}(x)$,
$s^\ast=0\in\partial \mathcal{F}({x}^\ast)$ and
\begin{equation}\label{eqn:first-order}
(s^\ast)^\top v=0\quad\forall v\in K({\rS}^\ast).
\end{equation}

Since all the conditions in Corollary 10.9 of \cite{rockafellar2009variational} hold, we derive
		\begin{align*}
\partial \mathcal{F}(x^\ast)
&= \partial\left(\beta \|Ax^\ast-b\|_1 \right) + \sum_{i\in\rS^\ast} \phi^\prime(|G_i^\top x^\ast|)\frac{G_i^\top x^\ast}{|G_i^\top x^\ast|}G_i + \partial\left( \sum_{i\in\rJ\backslash\rS^\ast}\phi(|G_i^\top x^\ast|) \right) \\
(\text{by \cite[Theorem 2.3]{zheng2020a} }) &=  \beta\partial \|\cdot\|_1 (Ax^\ast-b) + \sum_{i\in{\rS}^\ast} \phi^\prime(|G_i^\top x^\ast|)\frac{G_i^\top x^\ast}{|G_i^\top x^\ast|}G_i+\sum_{i\in\rJ\backslash{\rS}^\ast}(\ker G_i^\top)^\perp\\
& \subset
\beta A^\top [-1,1]^M
+\sum_{i\in{\rS}^\ast} \phi^\prime(|G_i^\top x^\ast|)\frac{G_i^\top x^\ast}{|G_i^\top x^\ast|}G_i+\sum_{i\in\rJ\backslash{\rS}^\ast}(\ker G_i^\top)^\perp.
\end{align*}

By \eqref{eqn:first-order}, there exists $d\in[-1,1]^M$, such that for any $v\in K(\rS^\ast)$,
$$
\sum_{i\in {\rS}^\ast}\phi^\prime(|G_i^\top {x}^\ast|)\frac{G_i^\top  {x}^\ast}{|G_i^\top  {x}^\ast|}G_i
^\top v
=-\beta d^\top Av
\leq
\beta\sum_{l=1}^M\|(A^\top)_l\|_2\|v\|_2=\alpha\|v\|_2,
$$
where $\alpha=\beta\sum\limits_{l=1}^M\|(A^\top)_l\|_2$ and $(A^\top)_l$ denotes the $l$'th column of $A^\top$.

Define
$$
{\rS}_+^\ast=\left\{ i\in{\rS}^\ast:G_i^\top {x}^\ast>0 \right\}\subset{\rS}^\ast.
$$
We consider a fixed $j\in{\rS}^\ast$,
and define
$$
V({\rS}^\ast,j,{\rS}_+^\ast)
=\left\{ v\in K(\rS^\ast): G_i^\top v>0\ \forall i\in \rS_+^\ast,\quad G_i^\top v<0\ \forall i\in \rS^\ast\backslash \rS_+^\ast,\quad |G_j^\top v|=1 \right\}
$$
which is nonempty, since
$v=\frac{{x}^\ast}{|G_j^\top {x}^\ast|}\in V({\rS}^\ast,j,{\rS}_+^\ast)$.
Let $\tilde{v}=\tilde{v}({\rS}^\ast,j,{\rS}_+^\ast)$ be the solution to the following problem
$$
\min\|v\|_2\quad\text{subject to }v\in \overline{V({\rS}^\ast,j,{\rS}_+^\ast)},
$$
where $\overline{V({\rS}^\ast,j,{\rS}_+^\ast)}$ denotes the closure of $V({\rS}^\ast,j,{\rS}_+^\ast)$. Then it implies that
$$
\tilde{v}\in K({\rS}^\ast);\text{ and for any } i\in{\rS}^\ast,\text{ either }G_i^\top {x}^\ast\cdot G_i^\top \tilde{v}>0\text{ or }G_i^\top \tilde{v}=0;\text{ and } |G_j^\top \tilde{v}|=1.
$$

	Similar to the definition of $K(\rS^\ast)$ and  $V({\rS}^\ast,j,{\rS}_+^\ast)$, for any set $\rI\subset \rJ$,   define
	$$
	K(\rI)=\left\{ u\in\RR^N:\  G_i^\top u=0\ \forall i\in \rJ\backslash \rI \right\},
	$$
	and for any subset $\rI_{+}\subset \rI$ and index $j\in\rI$, define
	$$
	V(\rI,j,\rI_+)=\{ v\in K(\rI): G_i^\top v>0\ \forall i\in \rI_+,\quad G_i^\top v<0\ \forall i\in \rI\backslash \rI_+,\quad |G_j^\top v|=1 \}.
	$$
	For those nonempty $V(\rI,j,\rI_+)$,
	let $\tilde{v}(\rI,j,\rI_+)$ be the solution to the following constrained optimization problem
	$$
	\min\|v\|_2\quad\text{ subject to }v\in \overline{V(\rI,j,\rI_+}).
	$$
Denote
$$
\mu(\rI)=\max_{\substack{j\in \rI, \rI_+\subset \rI
}}   \|\tilde{v}(\rI,j,\rI_+)\|_2
$$
and
\begin{equation}\label{eqn:mu}
\mu=\max_{\rI\subset\rJ }\{ \mu(\rI)\},
\end{equation}
where $\mu$ is well-defined and positive.
Then $\mu$ is a uniform upper bound of $\|\tilde{v}\|_2$, i.e.,
$\|\tilde{v}\|_2=\|\tilde{v}({\rS}^\ast,j,{\rS}_+^\ast)\|_2\leq\mu$ for any stationary point $x^\ast$.

By the assumption on function $\phi(\cdot)$ and the fact that for any $i\in{\rS}^\ast$, either $G_i^\top {x}^\ast\cdot G_i^\top \tilde{v}>0$ or $G_i^\top \tilde{v}=0$, we have
$$
\phi^\prime(|G_i^\top {x}^\ast|)\frac{G_i^\top {x}^\ast}{|G_i^\top {x}^\ast|}G_i
^\top \tilde{v}\geq0\quad\forall i\in{\rS}^\ast.
$$
Therefore,
$$
\phi^\prime(|G_j^\top {x}^\ast|)=
\phi^\prime(|G_j^\top {x}^\ast|)\frac{G_j^\top {x}^\ast}{|G_j^\top {x}^\ast|}G_j
^\top \tilde{v}
\leq\sum_{i\in{S}^\ast}\phi^\prime(|G_i^\top {x}^\ast|)\frac{G_i^\top {x}^\ast}{|G_i^\top {x}^\ast|}G_i
^\top \tilde{v}\leq\alpha\|\tilde{v}\|_2\leq\alpha\mu.
$$

	Since $\phi^\prime(0+)=+\infty$, the constant $\theta_1=\inf\{ t>0:\phi^\prime(t)\leq\alpha\mu \}>0$ is well defined and independent of ${x}^\ast$.
	Thus, we obtain that
	$$
	|G_j^\top {x}^\ast|\geq\theta_1,
	$$
	which holds for any $j\in {\rS}^\ast$ and any stationary point ${x}^\ast$.
	\end{proof}
\begin{remark}
	By Theorem \ref{thm:lbl2}, the lower bound theory holds for the local minimizers of the objective function $\mathcal{F}(x)$ in \eqref{eqn:oflqlp}, in the case of squared $\ell_2$ norm fidelity, i.e., $q=2$.
	By Theorem \ref{thm:local-min-lowerbd}, in the case of $\ell_1$ norm fidelity term, i.e., $q=1$, the lower bound theory
	can hold for all stationary points (including the local minimizers) of $\mathcal{F}(x)$.
\end{remark}
	
	By Theorem \ref{thm:lbl2} and
Theorem~\ref{thm:local-min-lowerbd}, there exists some constant $\theta$, such that for any local minimizer $x^\ast$ of $\mathcal{F}(x)$ with $q=2$ (or any stationary point $x^\ast$ of $\mathcal{F}(x)$ with $q=1$), it satisfies
$$
\text{either }G_i^\top  {x}^\ast=0\text{ or }|G_i^\top  {x}^\ast|\geq\theta.
$$
Suppose that  ${x}^\ast$ is very near to a given point $\tilde{x}$, such that $|G_i^\top {x}^\ast-G_i^\top \tilde{x}|\leq\|G_i\|_2\|{x}^\ast-\tilde{x}\|_2<\theta,\ \forall i\in \rJ$. Then $|G_i^\top {x}^\ast|\leq |G_i^\top {x}^\ast-G_i^\top \tilde{x}| + |G_i^\top \tilde{x}| <\theta+|G_i^\top \tilde{x}|$, which implies \textit{the support inclusion property}, i.e., for
$i\in\rJ$,
$$
G_i^\top {x}^\ast=0,\quad\text{when }G_i^\top \tilde{x}=0.
$$

In fact, although there are so far no lower bound theory for model \eqref{eqn:lqlp} with $q\in[1,+\infty)$ and $q\neq 1,2$,
the support inclusion property
can be shown to hold for the stationary points of model \eqref{eqn:lqlp} with any $q\in[1,+\infty)$ 
as stated in Theorem~\ref{thm:motivation}.

\begin{theorem}\label{thm:motivation}
Given $\tilde{x}\in\RR^N$, assume that a stationary point $\hat{x}$ of \eqref{eqn:lqlp} is sufficiently close to $\tilde{x}$. Then for $i\in \rJ$,
$$
G_i^\top \hat{x}=0,\quad\text{when  }G_i^\top \tilde{x}=0.
$$
\end{theorem}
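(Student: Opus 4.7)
The plan is to reduce the statement to a localized lower bound theory valid for arbitrary $q\in[1,+\infty)$: show that for every stationary point $\hat x$ in a small neighbourhood of $\tilde x$ there is a uniform constant $\theta>0$, depending only on $\tilde x$, such that every nonzero $G_i^\top\hat x$ satisfies $|G_i^\top\hat x|\ge\theta$. The theorem then follows by a triangle inequality: shrinking the neighbourhood so that $\|G_i\|_2\|\hat x-\tilde x\|_2<\theta$ for all $i\in\rJ$ forces $|G_i^\top\hat x|<\theta$ whenever $G_i^\top\tilde x=0$, which combined with the lower bound yields $G_i^\top\hat x=0$.

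To execute the lower bound step, I would mimic the strategy of Theorem \ref{thm:local-min-lowerbd}. Writing out $0\in\partial\mathcal F(\hat x)$ via the sum rule of \cite{rockafellar2009variational} gives, with $\rS^\ast=\rS(\hat x)$,
\[
0\in\beta A^\top \partial\bigl(\tfrac{1}{q}\|\cdot\|_q^q\bigr)(A\hat x-b)+\sum_{i\in\rS^\ast}\phi'(|G_i^\top\hat x|)\frac{G_i^\top\hat x}{|G_i^\top\hat x|}G_i+\sum_{i\in\rJ\setminus\rS^\ast}(\ker G_i^\top)^\perp.
\]
Pair this inclusion with the testing vector $\tilde v=\tilde v(\rS^\ast,j,\rS^\ast_+)$ built in Theorem \ref{thm:local-min-lowerbd}, whose $\ell_2$ norm is bounded by the geometric constant $\mu$ from \eqref{eqn:mu}, and whose sign pattern makes each regularization contribution $\phi'(|G_i^\top\hat x|)\tfrac{G_i^\top\hat x}{|G_i^\top\hat x|}G_i^\top\tilde v$ non-negative. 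Isolating the term at index $j\in\rS^\ast$ and sending the fidelity subgradient to the other side then produces an inequality of the form $\phi'(|G_j^\top\hat x|)\le \alpha_{\tilde x}\mu$, from which $\theta_{\tilde x}:=\inf\{t>0:\phi'(t)\le\alpha_{\tilde x}\mu\}>0$ is well defined by $\phi'(0^+)=+\infty$.

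The only place where $q$ enters, and hence the main obstacle, is the uniform upper bound on the fidelity subgradient. For $q=1$ this set is trivially $\beta A^\top[-1,1]^M$, which is bounded independently of $\hat x$; for $q\in(1,+\infty)$ it reduces to the single vector $\beta A^\top\bigl(|A\hat x-b|^{q-1}\odot\sgn(A\hat x-b)\bigr)$, whose $\ell_2$ norm scales like $\|A\hat x-b\|_\infty^{q-1}$ and is a priori unbounded. Here I would invoke the hypothesis of the theorem: restricting to $\hat x\in B(\tilde x,1)$ bounds $\|A\hat x-b\|_\infty\le C$ for a constant $C=C(\tilde x,A,b)$, and hence bounds the fidelity subgradient by some $\alpha_{\tilde x}$ determined entirely by $\tilde x$. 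With $\theta_{\tilde x}$ thus secured, shrinking the neighbourhood once more to $\|\hat x-\tilde x\|_2<\theta_{\tilde x}/\max_i\|G_i\|_2$ finishes the support inclusion by the triangle-inequality argument outlined in the first paragraph.
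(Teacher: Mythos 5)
Your proposal is correct and follows essentially the same route as the paper's proof: the paper likewise applies the first-order condition with the testing vector $\tilde v(\hat{\rS},j,\hat{\rS}_+)$ and the uniform bound $\mu$ from \eqref{eqn:mu}, controls the fidelity subgradient by confining $\hat x$ to a bounded neighbourhood of $\tilde x$ (it uses $\|\hat x-\tilde x\|<\|\tilde x\|$ rather than a unit ball) to get a constant $\alpha_1(\tilde x)$, and concludes via $\phi'(0^+)=+\infty$ and the same triangle-inequality shrinking of the neighbourhood. The only cosmetic difference is that the paper phrases the final step as a contradiction rather than as an explicit localized lower bound, but the logic is identical.
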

\begin{proof}
	At first, we assume  that there exists a  stationary point $\hat{x}$ of \eqref{eqn:lqlp}  satisfying $\|\hat{x}-\tilde{x}\|<\|\tilde{x}\|$, 
	which implies $\|\hat{x}\|<2\|\tilde{x}\|$.
	Suppose that
	for some $j\in \rJ$,  $G_j^\top \tilde{x}=0$ but $G_j^\top \hat{x}\neq 0$.
	Define
	$$
	\hat{\rS}=\rS(\hat{x})=\{ i\in \rJ:G_i^\top\hat{x}\neq0 \}\text{ and }
	K(\hat{\rS})=\left\{ x\in\RR^N:\  G_i^\top x=0\ \forall i\in \rJ\backslash \hat{\rS} \right\}.
	$$
	Since $\hat{x}$ is a stationary point of \eqref{eqn:lqlp},
	$\hat{s}=0\in\partial\mathcal{F}(\hat{x})$ and
	\begin{equation}\label{eqn:lqlp-first-order}
	\hat{s}^\top v=0\quad\text{for any } v\in K(\hat{\rS}).
	\end{equation}
	Since all the conditions in Corollary 10.9 of \cite{rockafellar2009variational} hold, we derive
		\begin{align*}
	\partial \mathcal{F}(\hat{x})
	&= \partial\left(\frac{\beta}{q} \|A\hat{x}-b\|_q^q \right) + \sum_{i\in\hat{\rS}} \phi^\prime(|G_i^\top \hat{x}|)\frac{G_i^\top \hat{x}}{|G_i^\top \hat{x}|}G_i + \partial\left( \sum_{i\in\rJ\backslash\hat{\rS}}\phi(|G_i^\top \hat{x}|) \right) \\
	&=  \frac{\beta}{q}A^\top\partial \|\cdot\|_q^q (A\hat{x}-b) + \sum_{i\in\hat{\rS}} \phi^\prime(|G_i^\top \hat{x}|)\frac{G_i^\top \hat{x}}{|G_i^\top \hat{x}|}G_i+\sum_{i\in\rJ\backslash\hat{\rS}}(\ker G_i^\top)^\perp.
	\end{align*}
	By \eqref{eqn:lqlp-first-order}, for any $v\in K(\hat{\rS})$,
	$$
	\sum_{i\in \hat{\rS}}\phi^\prime(|G_i^\top \hat{x}|)\frac{G_i^\top \hat{x}}{|G_i^\top \hat{x}|}G_i
	^\top v
	\leq
	\begin{cases}
	{\beta}\sum_{l=1}^M \left(\|(A^\top)_l\|_2\|\hat{x}\|_2+\|b_l\|_2 \right)^{q-1}\|(A^\top)_l\|_2\|v\|_2, & q>1 \\
	\beta\sum_{l=1}^M\|(A^\top)_l\|_2\|v\|_2, & q=1
	\end{cases}
	$$
Since $\|\hat{x}\|_2<2\|\tilde{x}\|_2$, we have	
	$$
	\sum_{i\in \hat{\rS}}\phi^\prime(|G_i^\top \hat{x}|)\frac{G_i^\top \hat{x}}{|G_i^\top \hat{x}|}G_i
	^\top v
	<
	\alpha_1(\tilde{x})\|v\|_2,
	$$
	where $\alpha_1(\tilde{x})
	=
	\beta\sum\limits_{l=1}^M \|(A^\top)_l\|_2
	\left(\max\left\{ 2\|(A^\top)_l\|_2\|\tilde{x}\|_2+\|b_l\|_2 ,1
	\right\}\right)^{q-1}$.

	By using similar notations in Theorem \ref{thm:local-min-lowerbd}, we denote
	$$
	\hat{\rS}_+=\{ i\in\hat{\rS}:G_i^\top \hat{x}>0 \}\subset\hat{\rS}
	$$
	and
	$$
	V(\hat{\rS},j,\hat{\rS}_+)=\{ v\in K(\hat{\rS}): G_i^\top v>0\ \forall i\in \hat{\rS}_+, G_i^\top v<0\ \forall i\in \hat{\rS}\backslash \hat{\rS}_+,\quad |G_j^\top v|=1 \}.
	$$
		Since $v=\frac{\hat{x}}{|G_j^\top \hat{x}|}\in V(\hat{\rS},j,\hat{\rS}_+)$, $
	V(\hat{\rS},j,\hat{\rS}_+)\neq\emptyset.
	$
	Let $\tilde{v}=\tilde{v}(\hat{\rS},j,\hat{\rS}_+)$ denote the solution to the following problem
	$$
	\min\|v\|_2\quad\text{subject to }v\in \overline{V(\hat{\rS},j,\hat{\rS}_+)},
	$$
	which satisfies
	$\|\tilde{v}\|_2=\|\tilde{v}(\hat{\rS},j,\hat{\rS}_+)\|_2\leq\mu$ with $\mu$ as defined in \eqref{eqn:mu}. Then,
	\begin{equation}\label{eqn:contr1}
	\phi^\prime(|G_j^\top \hat{x}|)=
	\phi^\prime(|G_j^\top \hat{x}|)\frac{G_j^\top \hat{x}}{|G_j^\top \hat{x}|}G_j
	^\top \tilde{v}\leq\sum_{i\in\hat{\rS}}\phi^\prime(|G_i^\top \hat{x}|)\frac{G_i^\top \hat{x}}{|G_i^\top \hat{x}|}G_i
	^\top \tilde{v}\leq\alpha_1(\tilde{x})\|\tilde{v}\|_2\leq\alpha_1(\tilde{x})\mu.
	\end{equation}
		Since $\phi^\prime(0+)=+\infty$, the constant $\theta=\inf\{ t>0:\phi^\prime(t)\leq\alpha_1(\tilde{x})\mu \}$ is well defined.
		If $\hat{x}$ is sufficiently close to $\tilde{x}$ such that $\|\hat{x}-\tilde{x}\|< \min\{\frac{\theta}{\max\limits_{i\in J}\|G_i\|},\|\tilde{x}\|\}$, we have
		$|G_j^\top \hat{x}|\leq |G_j^\top \hat{x}-G_j^\top  \tilde{x}| + |G_j^\top  \tilde{x}|\leq \|G_j\|\|\hat{x}-\tilde{x} \| <\theta$ and $\phi^\prime(|G_j^\top\hat{x}|)>\alpha_1(\tilde{x})\mu$, which  contradicts with \eqref{eqn:contr1}.
		Thus, for any $j\in \rJ$ with $G_j^\top \tilde{x}=0$,
		$
		G_i^\top \hat{x}=0.
		$
\end{proof}

          \section{Algorithms and convergence analysis}\label{sec:alg}

The theoretical results in the previous section not only exhibit some interesting model properties, but also help us to design iterative algorithms and perform convergence analysis for the non-Lipschitz composite minimization model \eqref{eqn:lqlp}.
\subsection{Algorithms}

We now derive an iterative algorithm to solve the model \eqref{eqn:lqlp}.
In order to find a local minimizer or stationary point near to a given point, the support inclusion analysis in Theorem~\ref{thm:motivation} naturally motivates a so-called support shrinking strategy at each iteration, like \cite{zeng2019iterative,zeng2019nonlip,zheng2020a} for various image restoration models
with isotropic regularization.
Given $x^k$, we denote $\rS^k=\rS(x^k)=\{ i\in \rJ:G_i^\top x^k\neq0 \}$,
and define
$C_k=\{x:G_i^\top x=0,\ \forall i\in \rJ\backslash \rS^k \}$. Then for $k=0,1,2,\ldots$, the following
$$
\begin{cases}
\min\limits_{x\in\RR^N}
\left\{\mathcal{F}_k(x):=
\sum\limits_{i\in \rS^k}\phi(|G_i^\top x|)+\frac{\beta}{q}\|Ax-b\|_q^q \right\},\\
\text{s.t. }x\in C_k,
\end{cases}
$$
is considered to compute $x^{k+1}$.

However, considering the finite word length of real computers and to avoid extremely large linearization weights described later, we do not track the indices just in $\rS^k$ or use the constraints $\rJ\backslash \rS^k=\{i\in \rJ: G_i^\top x^{k}=0 \}$.
Instead, we relax these at each iteration by a thresholding operation, and propose the strategy of  iterative thresholding and support shrinking (ITSS). Indeed, this relaxation is actually usually adopted in computer codes for support shrinking based algorithms, but so far, has been formulated explicitly only in \cite{feng2020the} with incorporation into the iteratively reweighted least square (IRLS) algorithm for group sparse signal recovery (where $G=$Identity).
In particular, let us denote the $\tau$-support
$\rT^k=\rT(x^k)=\{ i\in \rJ:|G_i^\top x^k|>\tau \}$ and the set $C_k^\tau=\{x:G_i^\top x=0,\ \forall i\in \rJ\backslash \rT^k \}$
accordingly.
At the $k$'th iteration, the differences $G_i^\top x^{k+1}$ with $i\in \rJ\backslash \rT^k$, instead of
$i\in \rJ\backslash \rS^k$,  are constrained to be zero.
That is, we construct the following
$$
({\mathfrak{F}}_k^\tau)\quad
\begin{cases}
\min\limits_{x\in\RR^N}
\left\{{\mathcal{F}}_k^\tau(x):=
\sum\limits_{i\in \rT^k}\phi(|G_i^\top x|) + \frac{\beta}{q}\|Ax-b\|_q^q \right\},\\
\text{s.t. }x\in C_k^\tau.
\end{cases}
$$
This problem is still nonconvex and difficult.
We however can linearize the $\phi$ at $|G_i^\top x^k|$ with $i\in\rT^k$. With a proximal term, we then write the following strongly convex optimization problem \begin{equation}\label{eqn:hktau}
	{({\mathfrak{H}}_k^\tau)}
	\left\{
	\begin{aligned}
	&\min\limits_{x\in\RR^N} \left\{{\mathcal{H}}_k^\tau(x):=\sum_{i\in \rT^k}\left\{\phi(|G_i^\top x^{k}|)+\phi'(|G_i^\top x^{k}|)\left( |G_i^\top x|-|G_i^\top x^k| \right) \right\} +\frac{\beta}{q}\|Ax-b\|_q^q+\frac{\rho}{2}\|x-x^k\|^2 \right\},\\
	&\text{s.t. }x\in C_k^\tau,
	\end{aligned}
	\right.
	\end{equation}
	for computing $x^{k+1}$.
Obviously ${\mathcal{H}}_k^\tau(x)$ in \eqref{eqn:hktau} has a
	unique optimal solution.
	Thus, we have derived the \textit{Iterative thresholding and support shrinking algorithm with proximal linearization (ITSS-PL)} as stated in Algorithm~\ref{alg:itssapl}, which is with iteratively reweighted $\ell_1$ flavor.

\begin{algorithm}[htbp]
	\caption{Iterative thresholding and support shrinking algorithm with proximal linearization (ITSS-PL)}
	\begin{algorithmic}
		\REQUIRE $A$, $b$, $\beta>0$, $\rho>0$,   $\tau\geq0$,  \text{MAXit}, $x^0\in\RR^N$.
		\WHILE{$k\leq \text{MAXit} $}
		\STATE{Compute $x^{k+1}$ by solving $({\mathfrak{H}}_k^\tau)$}.
		\ENDWHILE
	\end{algorithmic}
	\label{alg:itssapl}
\end{algorithm}

Precisely solving the subproblem $({\mathfrak{H}}_k^\tau)$ still needs infinite iteration steps, which is not practical in real computation. Therefore, instead of finding the exact minimizer, we solve the subproblem $({\mathfrak{H}}_k^\tau)$ in Algorithm~\ref{alg:itssapl} to some given accuracy, like \cite{Attouch2013Convergence,zhang2017nonconvexTV,zheng2020a}.
For any set $C$, recall the indicator function $\kappa_C$ as
$$
\kappa_C(x)=
\begin{cases}
0&\quad x\in C,\\
+\infty&\quad\text{otherwise.}
\end{cases}
$$
Then we come to the \textit{inexact iterative thresholding and support shrinking algorithm with proximal linearization (Inexact ITSS-PL)}, as given in Algorithm~\ref{alg:initssapl}.
We mention that \textit{Inexact ITSS-PL} degenerates to \textit{ITSS-PL}, if $\epsilon=0$.

\begin{algorithm}[htbp]
	\caption{Inexact iterative thresholding and support shrinking algorithm with proximal linearization (Inexact ITSS-PL)}
	\begin{algorithmic}
		\REQUIRE $A\in\RR^{M\times N}$, $b\in\RR^N$, $\beta>0$, $\rho>0$,   $\tau\geq0$, $0<\epsilon<1$, \text{MAXit}, $x^0\in\RR^N$.
		\WHILE{$k\leq \text{MAXit} $}
		\STATE{Compute $x^{k+1}$ by
			\begin{numcases}{}
			x^{k+1}\approx
			\arg\min\limits_{x\in\RR^N} {\mathcal{H}}_k^\tau(x)+\kappa_{C_k^\tau}(x)
			\text{ and }h^{k+1}\in\partial\left({\mathcal{H}}_k^\tau(x^{k+1})
			+\kappa_{C_k^\tau}(x^{k+1})\right)	\notag\\
			\text{s.t. }\|h^{k+1}\|_2\leq \frac{\rho}{2}\epsilon \|x^{k+1}-x^k\|_2
			\label{eqn:inequk},
			\end{numcases}
			where $C_k^\tau=\left\{x:G_i^\top x=0,\ \forall i\in \rJ\backslash \rT^k \right\}$.}
		\ENDWHILE
	\end{algorithmic}
\label{alg:initssapl}
\end{algorithm}

\subsection{Convergence analysis}
In what follows, we prove that the sequence generated by Algorithm~\ref{alg:initssapl} with the inexact inner loop does converge to a stationary point of the original minimization model (${\mathfrak{F}}$) in \eqref{eqn:lqlp}.
The proof is based on the
Kurdyka-{\L}ojasiewicz (KL) property (\cite{Lojasiewicz1963Une,Kurdyka1998gradients}), which has been extensively applied to the analysis of various optimization methods \cite{absil2005convergence,bolte2007the,Attouch2009convergence,Attouch2010Proximal,Attouch2013Convergence,zhang2017nonconvexTV,zheng2020a}.

Note that
$$
\partial\kappa_{C_k^\tau}(x)=
\begin{cases}\sum\limits_{i\in\rJ\backslash\rT^k}(\ker G_i^\top)^\perp,& x\in C_k^\tau,\\
\emptyset,&\text{otherwise.}
\end{cases}
$$
By \eqref{eqn:inequk}, we then have
$$
G_i^\top x^{k+1}=0,\quad i\in \rJ\backslash \rT^k,
$$
which implies $\rJ\backslash \rS^k\subset \rJ\backslash \rT^k\subset \rJ\backslash \rS^{k+1} \subset \rJ\backslash \rT^{k+1}$ and $\rT^{k+1}\subset\rS^{k+1}\subset \rT^k \subset \rS^k\subset\cdots\subset\rJ$.
Due to the finiteness of the set $\rJ$, it is also straightforward to see the finite
convergence of both support and $\tau$-support sequences $\rS^k $ and $\rT^k $, as shown in Lemma \ref{thm:nested}, like the finite convergence of the support sequence mentioned in the literature.
\begin{lemma}[Finite convergence of support and $\tau$-support]\label{thm:nested} For any $k\geq0$, $\rT^{k+1}\subset\rS^{k+1}\subset \rT^k \subset \rS^k$ and the sequences $\{\rS^k \}$ and $\{\rT^k\}$ converge in a finite number of iterations, i.e., there exists an integer $K>0$ such that $\rS^k=\rT^k=\rS^K$ for any $k\geq K$.
\end{lemma}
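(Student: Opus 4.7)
The plan is to unpack the algorithm's subdifferential condition to extract membership in $C_k^\tau$, establish the inclusion chain, and then invoke monotonicity on finite sets for the stabilization statement.

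First I would note that the inclusion $\rT^k \subset \rS^k$ is immediate from the definitions and the assumption $\tau \geq 0$: if $|G_i^\top x^k| > \tau \geq 0$ then $G_i^\top x^k \neq 0$. Next, the condition $h^{k+1} \in \partial\bigl(\mathcal{H}_k^\tau(x^{k+1}) + \kappa_{C_k^\tau}(x^{k+1})\bigr)$ in \eqref{eqn:inequk} forces $\partial\kappa_{C_k^\tau}(x^{k+1})$ to be nonempty, and from the formula for $\partial\kappa_{C_k^\tau}$ just above Lemma~\ref{thm:nested} this requires $x^{k+1} \in C_k^\tau$. Hence $G_i^\top x^{k+1} = 0$ for every $i \in \rJ \setminus \rT^k$, which is equivalent to $\rS^{k+1} \subset \rT^k$. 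Combined with the first observation applied at step $k+1$, I get the chain
\[
\rT^{k+1} \subset \rS^{k+1} \subset \rT^k \subset \rS^k,
\]
which is the first part of the statement.

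For the finite convergence, I would argue by a cardinality pigeonhole. The inclusion chain shows that $\{\rS^k\}$ is a non-increasing sequence of subsets of the finite set $\rJ$, so the integer sequence $\{|\rS^k|\}$ is non-increasing and bounded below by $0$. Thus there exists $K_1$ with $|\rS^k| = |\rS^{K_1}|$ for all $k \geq K_1$; combined with the nesting $\rS^{k+1} \subset \rS^k$, this yields $\rS^k = \rS^{K_1}$ for all $k \geq K_1$. The same reasoning applied to $\{\rT^k\}$ gives a corresponding $K_2$. Taking $K = \max\{K_1, K_2\}$, I then use the sandwich $\rT^k \subset \rS^k \subset \rT^{k-1}$ together with $\rT^{k-1} = \rT^k$ (valid for $k \geq K+1$) to conclude $\rS^k = \rT^k$; after possibly incrementing $K$ by one, this gives $\rS^k = \rT^k = \rS^K$ for all $k \geq K$.

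There is no real obstacle here: the whole argument rests on correctly reading off $x^{k+1} \in C_k^\tau$ from the inexact subgradient condition in \eqref{eqn:inequk} and then using the elementary fact that a monotone sequence of subsets of a finite set stabilizes. The only place one needs to be a little careful is the final equality $\rS^k = \rT^k$, which is not automatic from the chain alone and requires the stabilization of $\rT^k$ to be invoked before the sandwich step.
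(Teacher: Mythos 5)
Your proposal is correct and follows essentially the same route as the paper, which establishes the chain $\rT^{k+1}\subset\rS^{k+1}\subset\rT^k\subset\rS^k$ by reading off $x^{k+1}\in C_k^\tau$ from the subgradient condition \eqref{eqn:inequk} (via the formula for $\partial\kappa_{C_k^\tau}$) and then appeals to the finiteness of $\rJ$ for stabilization. Your added care in deriving the final equality $\rS^k=\rT^k$ from the sandwich $\rT^k\subset\rS^k\subset\rT^{k-1}$ after $\{\rT^k\}$ has stabilized is a detail the paper leaves implicit, and it is handled correctly.
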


By Lemma~\ref{thm:nested}, the sets $\rS^k$ and $\rT^k$ will be unchanged when $k\geq K$. In the following, to prove the convergence, we only need to focus on the case $k\geq K$. We denote
\begin{equation}\label{eqn:fixsupp}
\bar{\rS}=\rS^k=\rT^k=\{ i\in \rJ:G_i^\top x^k\neq0 \}=\{i\in\rJ: |G_i^\top x^k|>\tau \}\quad\text{for }k\geq K.
\end{equation}
and
$$
\bar{\mathcal{H}}_k^\tau(x):=
\sum\limits_{i\in \bar{\rS}}\left\{\phi(|G_i^\top x^{k}|)+\phi'(|G_i^\top x^{k}|)\left( |G_i^\top x|-|G_i^\top x^k| \right) \right\}
+ \frac{\beta}{q} \|Ax-b\|_q^q+\frac{\rho}{2}\|x-x^k\|^2.
$$
Then for $k\geq K$, the subproblem at the $k$'th step in Algorithm~\ref{alg:initssapl} becomes
\begin{equation}\label{eqn:fixmin}
\begin{cases}
x^{k+1}\approx
\arg\min\limits_{x\in\RR^N} \bar{\mathcal{H}}_k^\tau(x)
+\kappa_{\bar{C}}(x)
\text{ and }h^{k+1}\in\partial\left(\bar{\mathcal{H}}_k^\tau(x^{k+1})
+\kappa_{\bar{C}}(x^{k+1})
\right) \\
\text{s.t. }\|h^{k+1}\|_2\leq \frac{\rho}{2}\epsilon \|x^{k+1}-x^k\|_2
\end{cases},
\end{equation}
where $\bar{C}=\{x:G_i^\top x=0,\ \forall i\in \rJ\backslash \bar{\rS} \}$.

The next lemma is the decreasing property of the objective function.

\begin{lemma}[Sufficient decrease]\label{thm:decrease}
	The sequence $\{\mathcal{F}(x^k)\}_{k\geq K}$ is nonincreasing, and more precisely, for any $k\geq K$,
	$$
	(1-\epsilon)\frac{\rho}{2}\|x^{k+1}-x^k\|_2^2\leq \mathcal{F}(x^k)-\mathcal{F}(x^{k+1}).
	$$
\end{lemma}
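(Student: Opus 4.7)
The plan is to exploit two structural facts about the surrogate $\bar{\mathcal H}_k^\tau$: it upper bounds $\mathcal F$ (on $\bar C$) with equality and zero residual gradient at $x=x^k$, and, together with the indicator $\kappa_{\bar C}$, it is $\rho$-strongly convex. The inexactness criterion $\|h^{k+1}\|_2 \leq \tfrac{\rho}{2}\epsilon\|x^{k+1}-x^k\|_2$ then only erodes the descent by a factor of $\epsilon$.

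First I would record two baseline identities/inequalities. By the definition \eqref{eqn:fixsupp} of $\bar{\rS}$, we have $G_i^\top x^k = 0$ for all $i\in \rJ\setminus\bar{\rS}$, so $x^k\in\bar C$ and direct substitution gives $\bar{\mathcal H}_k^\tau(x^k) = \mathcal F(x^k)$ (the linearization residual and proximal term both vanish). Since $\phi$ is concave on $[0,\infty)$ with $\phi(0)=0$ (Assumption~\ref{thm:xp}), for each $i\in\bar{\rS}$ and any $t\ge 0$,
\[
\phi(t) \;\le\; \phi(|G_i^\top x^k|) + \phi'(|G_i^\top x^k|)\bigl(t - |G_i^\top x^k|\bigr),
\]
and because $G_i^\top x = 0 \Rightarrow \phi(|G_i^\top x|)=0$ for $i\in\rJ\setminus\bar{\rS}$ whenever $x\in\bar C$, summing gives
\[
\bar{\mathcal H}_k^\tau(x) \;\ge\; \mathcal F(x) + \tfrac{\rho}{2}\|x-x^k\|_2^2 \qquad\forall\, x\in\bar C.
\]
Applying this at $x=x^{k+1}$ (which lies in $\bar C$ since $h^{k+1}\in\partial(\bar{\mathcal H}_k^\tau + \kappa_{\bar C})(x^{k+1})$ requires $\kappa_{\bar C}(x^{k+1})<\infty$) yields
\[
\bar{\mathcal H}_k^\tau(x^{k+1}) \;\ge\; \mathcal F(x^{k+1}) + \tfrac{\rho}{2}\|x^{k+1}-x^k\|_2^2.
\]

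Next I would invoke strong convexity. Every summand in $\bar{\mathcal H}_k^\tau$ except the proximal one is convex (the weights $\phi'(|G_i^\top x^k|)$ are positive by Assumption~\ref{thm:xp}(b), and $t\mapsto|G_i^\top x|$ and $\tfrac{\beta}{q}\|A\cdot-b\|_q^q$ are convex for $1\le q<\infty$), and $\kappa_{\bar C}$ is convex. Hence $g:=\bar{\mathcal H}_k^\tau + \kappa_{\bar C}$ is $\rho$-strongly convex. The standard subgradient inequality applied with $h^{k+1}\in\partial g(x^{k+1})$ at the point $x^k$ gives
\[
g(x^k)\;\ge\; g(x^{k+1}) + \langle h^{k+1},\, x^k - x^{k+1}\rangle + \tfrac{\rho}{2}\|x^k-x^{k+1}\|_2^2.
\]
Using $g(x^k)=\mathcal F(x^k)$ and the bound for $\bar{\mathcal H}_k^\tau(x^{k+1})$ above, this becomes
\[
\mathcal F(x^k) \;\ge\; \mathcal F(x^{k+1}) + \rho\|x^{k+1}-x^k\|_2^2 + \langle h^{k+1},\, x^k - x^{k+1}\rangle.
\]

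Finally I would absorb the inexactness term. Cauchy--Schwarz and \eqref{eqn:inequk} yield
\[
\langle h^{k+1},\, x^k - x^{k+1}\rangle \;\ge\; -\|h^{k+1}\|_2\,\|x^{k+1}-x^k\|_2 \;\ge\; -\tfrac{\rho\epsilon}{2}\|x^{k+1}-x^k\|_2^2,
\]
and substituting gives $\mathcal F(x^k) - \mathcal F(x^{k+1}) \ge \tfrac{(2-\epsilon)\rho}{2}\|x^{k+1}-x^k\|_2^2$, which is stronger than the claimed $(1-\epsilon)\tfrac{\rho}{2}\|x^{k+1}-x^k\|_2^2$ bound (so the stated lemma follows a fortiori). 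The only subtle step is verifying $x^{k+1}\in\bar C$ from the subdifferential containment so that the concavity upper bound applies to the regularization part; everything else is a direct combination of the concavity linearization and the strongly-convex subgradient inequality. No serious obstacle is anticipated.
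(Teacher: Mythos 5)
Your proof is correct and follows essentially the same route as the paper's: both evaluate the convex surrogate at $x^k$ to get $\bar{\mathcal{H}}_k^\tau(x^k)=\mathcal{F}(x^k)$, lower-bound $\bar{\mathcal{H}}_k^\tau(x^{k+1})$ by $\mathcal{F}(x^{k+1})+\tfrac{\rho}{2}\|x^{k+1}-x^k\|_2^2$ using concavity of $\phi$ together with $x^{k+1}\in\bar{C}$, and absorb the inexactness via Cauchy--Schwarz and \eqref{eqn:inequk}. The only difference is that you use the $\rho$-strongly-convex form of the subgradient inequality where the paper uses plain convexity, which yields the slightly sharper constant $(2-\epsilon)\tfrac{\rho}{2}$ in place of $(1-\epsilon)\tfrac{\rho}{2}$ and hence implies the stated lemma a fortiori.
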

\begin{proof}
	For any $k\geq K$, by the convexity of function $\bar{\mathcal{H}}_k^\tau$ in \eqref{eqn:fixmin}, we have
	\begin{align*}
	\bar{\mathcal{H}}_k^\tau(x^{k+1})=&\sum_{i\in \bar{\rS}}
\left\{	\phi(|G_i^\top x^{k}|)+\phi'(|G_i^\top x^{k}|)\left( |G_i^\top x^{k+1}|-|G_i^\top x^k| \right)\right\}
+\frac{\beta}{q}\|Ax^{k+1}-b\|_q^q+\frac{\rho}{2}\|x^{k+1}-x^k\|_2^2\\
	\leq&\bar{\mathcal{H}}_k^\tau(x^k) - \langle h^{k+1},x^k-x^{k+1}\rangle\\
	\leq & \sum_{i\in \bar{\rS}}\phi(|G_i^\top x^{k}|)+\frac{\beta}{q}\|Ax^{k}-b\|_q^q +\frac{\rho}{2}\epsilon\|x^{k+1}-x^{k}\|_2^2\\
	=& \mathcal{F}(x^k) +\frac{\rho}{2}\epsilon\|x^{k+1}-x^{k}\|_2^2.
	\end{align*}
		By Assumption \ref{thm:xp},
		\begin{equation}\label{eqn:concave}
		\phi(t)\leq\phi(\bar{t})+\phi^\prime(\bar{t})(t-\bar{t}),\quad\forall t\geq0\text{ and } \bar{t}>0.
		\end{equation}
	Then by \eqref{eqn:concave},
	\begin{align*}
	\bar{\mathcal{H}}_k^\tau(x^{k+1})
	\geq & \sum_{i\in \bar{\rS}}\phi(|G_i^\top x^{k+1}|) +\frac{\beta}{q}\|Ax^{k+1}-b\|_q^q+\frac{\rho}{2}\|x^{k+1}-x^k\|_2^2\\
	= & \mathcal{F}(x^{k+1}) + \frac{\rho}{2}\|x^{k+1}-x^k\|_2^2.
	\end{align*}
	Therefore, we obtain
	$$
	\mathcal{F}(x^k)- \mathcal{F}(x^{k+1}) \geq (1-\epsilon)\frac{\rho}{2}\|x^{k+1}-x^k\|_2^2\quad\text{for any }k\geq K.
	$$
	\end{proof}

From Lemma~\ref{thm:decrease}, we see the boundness of the sequence $\{ x^k\}$.

\begin{lemma}[Square summable and asymptotic convergence]\label{thm:sum}
	The sequence $\{x^k\}$ is bounded and satisfies
	$$
	\sum_{k=0}^\infty \|x^{k+1}-x^k\|^2<\infty.
	$$
	Hence $\lim\limits_{k\to\infty}\|x^{k+1}-x^k\|=0$.
\end{lemma}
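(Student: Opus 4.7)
The plan is to combine the sufficient decrease estimate from Lemma~\ref{thm:decrease} with the coercivity of $\mathcal{F}$ from Theorem~\ref{thm:coercive}, and then exploit that $\mathcal{F}$ is bounded below by zero so that the telescoping sum is finite.

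First, I would address boundedness. Since $\mathcal{F}(x^k)$ is nonincreasing for $k \geq K$ by Lemma~\ref{thm:decrease}, every iterate $x^k$ with $k \geq K$ lies in the sublevel set $\{x \in \RR^N : \mathcal{F}(x) \leq \mathcal{F}(x^K)\}$. Theorem~\ref{thm:coercive} tells us this sublevel set is bounded. Appending the finitely many initial iterates $x^0, x^1, \ldots, x^{K-1}$ does not affect boundedness, so $\{x^k\}_{k \geq 0}$ is bounded.

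Next, for the square summability I would telescope the sufficient decrease inequality. Summing $(1-\epsilon)\frac{\rho}{2}\|x^{k+1}-x^k\|_2^2 \leq \mathcal{F}(x^k) - \mathcal{F}(x^{k+1})$ over $k = K, K+1, \ldots, N$ gives
$$
(1-\epsilon)\frac{\rho}{2}\sum_{k=K}^{N} \|x^{k+1}-x^k\|_2^2 \leq \mathcal{F}(x^K) - \mathcal{F}(x^{N+1}).
$$
Since $\mathcal{F}(x) \geq 0$ for all $x$ (both $\phi \geq 0$ and the $\ell_q$ fidelity are nonnegative), the right-hand side is bounded above by $\mathcal{F}(x^K) < \infty$. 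Letting $N \to \infty$ and using $1 - \epsilon > 0$ and $\rho > 0$, we obtain $\sum_{k=K}^{\infty} \|x^{k+1}-x^k\|_2^2 < \infty$. Adding the finite head $\sum_{k=0}^{K-1} \|x^{k+1}-x^k\|_2^2$ yields the stated series convergence from $k=0$.

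Finally, the asymptotic conclusion $\lim_{k\to\infty}\|x^{k+1}-x^k\| = 0$ is immediate: if a nonnegative series $\sum a_k^2$ converges, then $a_k^2 \to 0$, hence $a_k \to 0$. No step here is particularly delicate; the only thing to be careful about is to split the sum at $k = K$ so that Lemma~\ref{thm:decrease} applies, and to invoke the nonnegativity of $\mathcal{F}$ (rather than trying to identify $\lim_k \mathcal{F}(x^k)$) in order to keep the telescoping bound finite.
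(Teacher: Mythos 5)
Your proof is correct and follows essentially the same route as the paper: boundedness from the monotonicity of $\mathcal{F}(x^k)$ plus coercivity, and square summability by telescoping the sufficient decrease inequality against the nonnegativity of $\mathcal{F}$. If anything, you are slightly more careful than the paper, which telescopes from $k=0$ even though Lemma~\ref{thm:decrease} is only stated for $k\geq K$; your split of the sum at $k=K$ handles this cleanly.
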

\begin{proof}
	By Lemma~\ref{thm:decrease}, $\{ \mathcal{F}(x^k) \}$ is bounded and convergent. Since $\mathcal{F}(x)$ is coercive (Theorem \ref{thm:converge}), $\{x^k\}$ is bounded.	
	Again by Lemma~\ref{thm:decrease}, we get
	$$
	(1-\epsilon)\frac{\rho}{2}\sum_{k=0}^{K_1}\|x^{k+1}-x^k\|^2\leq\mathcal{F}(x^0)-\mathcal{F}(x^{K_1+1})\leq\mathcal{F}(x^0).
	$$
	Letting $K_1\to\infty$ completes the proof.
\end{proof}

A very useful lemma is the following uniform lower bound result of nonzero differences of the iterate sequence. 

\begin{lemma}[Lower bound of the sequence]\label{thm:lb-sequence}
	There exists a constant $\bar{\theta}>0$ such that
	$$
	|G_i^\top x^k|\geq\bar{\theta}\quad\forall k\geq K,\forall i\in\bar{\rS}.
	$$
\end{lemma}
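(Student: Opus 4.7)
The plan is to adapt the first-order optimality argument of Theorem \ref{thm:local-min-lowerbd} (the lower bound theorem for stationary points) to the inexact subproblem solution, exploiting the fact that for $k\geq K$ both the support and the $\tau$-support have already frozen to $\bar{\rS}$.

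First, I fix $k\geq K$ and look at the optimality condition \eqref{eqn:fixmin}. Using Corollary 10.9 of \cite{rockafellar2009variational} as in the proof of Theorem \ref{thm:local-min-lowerbd}, the element $h^{k+1}\in\partial(\bar{\mathcal{H}}_k^\tau(x^{k+1})+\kappa_{\bar C}(x^{k+1}))$ can be decomposed as
$$h^{k+1}=\sum_{i\in\bar{\rS}}\phi'(|G_i^\top x^{k}|)\sigma_i^{k+1}G_i+\beta A^\top\xi^{k+1}+\rho(x^{k+1}-x^k)+u^{k+1},$$
where $\sigma_i^{k+1}\in\partial|\cdot|(G_i^\top x^{k+1})$, $\xi^{k+1}\in\frac{1}{q}\partial\|\cdot\|_q^q(Ax^{k+1}-b)$, and $u^{k+1}\in\sum_{i\in\rJ\setminus\bar{\rS}}(\ker G_i^\top)^\perp$. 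By Lemma \ref{thm:nested}, $\rS^{k+1}=\bar{\rS}$, so every $i\in\bar{\rS}$ satisfies $G_i^\top x^{k+1}\neq 0$ and $\sigma_i^{k+1}=\sgn(G_i^\top x^{k+1})$.

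Next, for each $j\in\bar{\rS}$ I mimic the testing vector construction in Theorem \ref{thm:local-min-lowerbd}, replacing $\rS^\ast,\rS^\ast_+$ by $\bar{\rS}$ and $\bar{\rS}_+^{k+1}:=\{i\in\bar{\rS}:G_i^\top x^{k+1}>0\}$. Let $\tilde v^{k+1}=\tilde v(\bar{\rS},j,\bar{\rS}_+^{k+1})$; the vector $x^{k+1}/|G_j^\top x^{k+1}|$ again certifies the feasible set is nonempty. By construction $\tilde v^{k+1}\in K(\bar{\rS})\subset \bar{C}$, so $u^{k+1}$ is orthogonal to $\tilde v^{k+1}$, and $G_i^\top\tilde v^{k+1}$ carries the same sign as $G_i^\top x^{k+1}$ for every $i\in\bar{\rS}$, while $|G_j^\top\tilde v^{k+1}|=1$. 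Taking the inner product of the displayed identity with $\tilde v^{k+1}$ and isolating the $j$-th summand yields
$$\phi'(|G_j^\top x^k|)\leq\sum_{i\in\bar{\rS}}\phi'(|G_i^\top x^k|)|G_i^\top\tilde v^{k+1}|=\langle h^{k+1}-\beta A^\top\xi^{k+1}-\rho(x^{k+1}-x^k),\tilde v^{k+1}\rangle.$$

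To finish, I bound the right-hand side uniformly in $k$. Lemma \ref{thm:sum} gives boundedness of $\{x^k\}$, so $\{Ax^{k+1}-b\}$ is bounded and hence $\|\beta A^\top\xi^{k+1}\|_2\leq C_1$ for a constant $C_1$ depending only on $\beta,q,A,b$ and the bound on $\{x^k\}$. The inexact condition in \eqref{eqn:fixmin} gives $\|h^{k+1}\|_2\leq\tfrac{\rho\epsilon}{2}\|x^{k+1}-x^k\|_2$, and $\|x^{k+1}-x^k\|_2$ is bounded (in fact tends to $0$). Finally $\|\tilde v^{k+1}\|_2\leq\mu$ with $\mu$ defined in \eqref{eqn:mu}, independent of $k$. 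Collecting constants gives $\phi'(|G_j^\top x^k|)\leq C$ for all $k\geq K$ and $j\in\bar{\rS}$, and then $\phi'(0^+)=+\infty$ produces $\bar\theta:=\inf\{t>0:\phi'(t)\leq C\}>0$ with $|G_j^\top x^k|\geq\bar\theta$ as required.

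The main obstacle I anticipate is keeping every estimate independent of $k$: the testing vector $\tilde v^{k+1}$ depends on the sign pattern of $G_i^\top x^{k+1}$ (which may change with $k$), so I rely on the uniform bound $\mu$ from \eqref{eqn:mu} taken over the finitely many subsets of $\rJ$. Analogously, the $\|A^\top\xi^{k+1}\|_2$ bound must hold uniformly, which forces me to use the boundedness of $\{x^k\}$ from Lemma \ref{thm:sum} rather than any asymptotic information.
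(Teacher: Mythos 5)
Your proof is correct and follows essentially the same route as the paper's: the same subgradient decomposition of $h^{k+1}$, the same sign-compatible testing vector $\tilde v^{k+1}$ with the uniform norm bound $\mu$ from \eqref{eqn:mu}, and the same appeal to $\phi'(0^+)=+\infty$ after bounding all remaining terms via the boundedness of $\{x^k\}$ and the inexactness condition. If anything, your index bookkeeping is slightly cleaner, since isolating the $j$-th summand bounds $\phi'(|G_j^\top x^k|)$ directly for every $k\geq K$, whereas the paper phrases the resulting bound in terms of $|G_{i'}^\top x^{k+1}|$.
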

\begin{proof}
	Without loss of generality, suppose that $\bar{\rS}\neq\emptyset$.
	By \eqref{eqn:fixsupp}, a natural and obvious consequence is
	$$
	|G_i^\top x^k|>\tau\geq0\quad\forall k\geq K,\forall i\in\bar{\rS}.
	$$
    Next, we derive a positive lower bound of nonzero differences of the iterate sequence, even if $\tau=0$.
	
	Since $\kappa_{\bar{C}}(x)$ 
	is subdifferentially regular at $x\in\bar{C}$, we derive
	\begin{align*}
	h^{k+1}\in &\partial \left(\bar{\mathcal{H}}_k(x^{k+1}) +
	\kappa_{\bar{C}}(x^{k+1})
	\right)\\
	= & \partial\left(\frac{\beta}{q}\|Ax^{k+1}-b\|_q^q \right)
	+ \sum_{i\in\bar{\rS}}\phi^\prime(|G_i^\top x^{k}|)\frac{G_i^\top x^{k+1}}{|G_i^\top x^{k+1}|}G_i+\rho(x^{k+1}-x^{k})
	+ \partial\kappa_{\bar{C}}(x^{k+1}),
	\end{align*}
	where
	$\partial\left(\frac{\beta}{q}\|Ax^{k+1}-b\|_q^q \right)
	=\frac{\beta}{q}A^\top\partial \|\cdot\|_q^q (A\hat{x}-b)
	$
	  and $\partial\kappa_{\bar{C}}(x^{k+1})
	=\sum\limits_{i\in\rJ\backslash\bar{\rS}}(\ker G_i^\top)^\perp$.
Then there exist
$z^{k+1}\in \partial\left(\frac{\beta}{q}\|Ax^{k+1}-b\|_q^q \right)$
	and $\eta_i^{k+1}\in(-\infty,+\infty)$ ($i\in\rJ\backslash\bar{\rS}$) such that
	$$
	h^{k+1}=
	z^{k+1}
	+ \sum_{i\in\bar{\rS}}\phi^\prime(|G_i^\top x^{k}|)\frac{G_i^\top x^{k+1}}{|G_i^\top x^{k+1}|}G_i+\rho(x^{k+1}-x^{k}) + \sum_{i\in\rJ\backslash\bar{\rS}}\eta_i^{k+1}G_i.
	$$

	Denote $K(\bar{\rS})=\left\{ x\in\RR^N:\  G_i^\top x=0\ \forall i\in \rJ\backslash \bar{\rS} \right\}$ and $\bar{\rS}^{k+1}_+=\{i\in\bar{\rS}:G_i^\top x^{k+1}>0  \}$. Consider a fixed index $i^\prime\in \bar{\rS}$ and define the set
	$$
	V(\bar{\rS},i^\prime,\bar{\rS}^{k+1}_+)=\{ v\in K(\bar{\rS}): G_i^\top v>0\ \forall i\in \bar{\rS}^{k+1}_+,\quad G_i^\top v<0\ \forall i\in \bar{\rS}\backslash \bar{\rS}^{k+1}_+, \quad |G_{i^\prime}^\top v|=1 \}.
	$$
	Note that $V(\bar{\rS},i^\prime,\bar{\rS}^{k+1}_+)\neq\emptyset$, since $v=\frac{x^{k+1}}{|G_{i^\prime}^\top x^{k+1}|}\in V(\bar{\rS},i^\prime,\bar{\rS}^{k+1}_+)$. Take $v=\tilde{v}^{k+1}$ to be the solution to the following problem
	$$
	\min\|v\|_2\quad\text{s.t. }v\in \overline{V(\bar{\rS},i^\prime,\bar{\rS}^{k+1}_+)}
	$$
	implying that
	$$
	\tilde{v}^{k+1}\in K(\bar{\rS}); \text{ for any } i\in\bar{\rS},\text{ either } G_i^\top x^{k+1}\cdot G_i^\top \tilde{v}^{k+1}>0\text{ or }G_i^\top \tilde{v}^{k+1}=0;\ |G_{i^\prime}^\top \tilde{v}^{k+1}|=1,
	$$
	and $\|\tilde{v}^{k+1}\|_2\leq\mu$ with $\mu$ as defined in \eqref{eqn:mu}. It then follows that
	\begin{align*}
	\langle h^{k+1},\tilde{v}^{k+1}\rangle
	= \langle z^{k+1}, \tilde{v}^{k+1}\rangle
	+ \sum_{i\in\bar{\rS}}\phi^\prime(|G_i^\top x^{k_j}|)\frac{G_i^\top x^{k+1}}{|G_i^\top x^{k+1}|}G_i^\top \tilde{v}^{k+1} \\ +\rho\langle x^{k+1}-x^{k},\tilde{v}^{k+1}\rangle + \sum_{i\in\rJ\backslash\bar{\rS}}\eta_i^{k+1}G_i^\top \tilde{v}^{k+1},
	\end{align*}
	indicating
	\begin{align*}
	\langle h^{k+1},\tilde{v}^{k+1}\rangle -
	\langle z^{k+1},\tilde{v}^{k+1}\rangle
	- \rho\langle x^{k+1}-x^{k},\tilde{v}^{k+1}\rangle
	&=\sum_{i\in\bar{\rS}}\phi^\prime(|G_i^\top x^{k}|)\frac{G_i^\top x^{k+1}}{|G_i^\top x^{k+1}|}G_i^\top \tilde{v}^{k+1}\\
	&\geq\phi^\prime(|G_{i^\prime}^\top x^{k+1}|).
	\end{align*}
	On the other hand, by the boundness of $\{x^k\}$, there exists a constant $\Gamma>0$ such that $\|x^k\|_2\leq\Gamma$ for any $k\in\mathbb{N}$. Thus, the above inequality becomes
	\begin{align*}
	\phi^\prime(|G_{i^\prime}^\top x^{k+1}|)\leq
	\langle h^{k+1},\tilde{v}^{k+1}\rangle -
	\langle z^{k+1},\tilde{v}^{k+1}\rangle
	- \rho\langle x^{k+1}-x^{k},\tilde{v}^{k+1}\rangle\leq \bar{\alpha}
	\end{align*}
	where $\bar{\alpha}=
	(\epsilon+2)\mu\rho\Gamma
	+\beta\mu\sum\limits_{l=1}^M \left(\Gamma\| (A^\top)_l\|_2+|b_l|\right)^{q-1} \|(A^\top)_l\|_2$.	
	Since $\phi^\prime(0+)=+\infty$, the constant $\theta'=\inf\{ t>0:\phi^\prime(t)\leq\bar{\alpha} \}>0$ is well defined and consequently
	$$
	|G_{i'}^\top  x^{k+1}|\geq\theta^\prime,
	$$
	holds for any $i'\in \bar{\rS}$ and any $k\geq K$.

	Therefore, the uniform lower bound of nonzero differences of the iterate sequence can be taken as
	$$
	\bar{\theta}
	=\max\left\{ \tau,\theta' \right\}>0.
	$$
\end{proof}
	
	In Lemma~\ref{thm:lb-sequence}, we have proved the existence of a uniform lower bound for nonzero differences generated by the iterative sequence. Meanwhile, as stated in Assumption~\ref{thm:xp} (3), $\phi^\prime$ is $L_
	\alpha$-Lipschitz continuous on $[\alpha,+\infty)$ for any $\alpha>0$. We can then overcome the difficulties in the convergence analysis brought by  the non-Lipschitz property of $\phi^\prime$ at the zero point, and
	a subgradient lower bound for the iteration gap is given in the following lemma.

\begin{lemma}[A subgradient lower bound for the iteration gap]\label{thm:lb-subgradient}
	For each $k\geq K$, there exists $s^{k+1}\in\partial \mathcal{F}(x^{k+1})$ such that
	$$
	\|s^{k+1}\|_2\leq c\|x^{k+1}-x^{k}\|_2,
	$$
	for some constant $c$.
\end{lemma}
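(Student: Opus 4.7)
The plan is to exhibit an element $s^{k+1}\in\partial\mathcal{F}(x^{k+1})$ by ``replacing the linearization weight'' $\phi'(|G_i^\top x^k|)$ inside $h^{k+1}$ with the ``true'' weight $\phi'(|G_i^\top x^{k+1}|)$, and to show that this replacement plus the proximal term costs at most $O(\|x^{k+1}-x^k\|_2)$ in norm.

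First I would write down the explicit decomposition of $h^{k+1}$ already displayed in the proof of Lemma~\ref{thm:lb-sequence}: there exist $z^{k+1}\in\partial\bigl(\tfrac{\beta}{q}\|Ax^{k+1}-b\|_q^q\bigr)$ and scalars $\eta_i^{k+1}\in\RR$ for $i\in\rJ\backslash\bar{\rS}$ such that
\begin{equation*}
h^{k+1} \;=\; z^{k+1} + \sum_{i\in\bar{\rS}}\phi'(|G_i^\top x^{k}|)\frac{G_i^\top x^{k+1}}{|G_i^\top x^{k+1}|}G_i + \rho(x^{k+1}-x^{k}) + \sum_{i\in\rJ\backslash\bar{\rS}}\eta_i^{k+1}G_i.
\end{equation*}
Since $|G_i^\top x^{k+1}|\geq\bar{\theta}>0$ for $i\in\bar{\rS}$ by Lemma~\ref{thm:lb-sequence}, while $G_i^\top x^{k+1}=0$ for $i\in\rJ\backslash\bar{\rS}$, the calculus for $\partial\mathcal{F}$ at the start of the proof of Theorem~\ref{thm:local-min-lowerbd} shows that the vector
\begin{equation*}
s^{k+1} \;:=\; z^{k+1} + \sum_{i\in\bar{\rS}}\phi'(|G_i^\top x^{k+1}|)\frac{G_i^\top x^{k+1}}{|G_i^\top x^{k+1}|}G_i + \sum_{i\in\rJ\backslash\bar{\rS}}\eta_i^{k+1}G_i
\end{equation*}
belongs to $\partial\mathcal{F}(x^{k+1})$, where I reuse the \emph{same} $z^{k+1}$ and the \emph{same} $\eta_i^{k+1}$ (this is legitimate because $(\ker G_i^\top)^\perp=\RR G_i$ absorbs any real coefficient).

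Next, subtract the two expressions to obtain
\begin{equation*}
s^{k+1}-h^{k+1} \;=\; \sum_{i\in\bar{\rS}}\bigl[\phi'(|G_i^\top x^{k+1}|)-\phi'(|G_i^\top x^{k}|)\bigr]\frac{G_i^\top x^{k+1}}{|G_i^\top x^{k+1}|}G_i \;-\; \rho(x^{k+1}-x^{k}),
\end{equation*}
and estimate the norm by the triangle inequality. For each $i\in\bar{\rS}$, both $|G_i^\top x^k|$ and $|G_i^\top x^{k+1}|$ lie in $[\bar{\theta},+\infty)$, so Assumption~\ref{thm:xp}(c) gives
\begin{equation*}
\bigl|\phi'(|G_i^\top x^{k+1}|)-\phi'(|G_i^\top x^{k}|)\bigr|\;\leq\; L_{\bar{\theta}}\bigl||G_i^\top x^{k+1}|-|G_i^\top x^{k}|\bigr|\;\leq\; L_{\bar{\theta}}\|G_i\|_2\|x^{k+1}-x^{k}\|_2.
\end{equation*}
Combining with the inexactness bound $\|h^{k+1}\|_2\leq\tfrac{\rho}{2}\epsilon\|x^{k+1}-x^k\|_2$ from \eqref{eqn:fixmin} yields
\begin{equation*}
\|s^{k+1}\|_2 \;\leq\; \Bigl(L_{\bar{\theta}}\sum_{i\in\bar{\rS}}\|G_i\|_2^2+\rho+\tfrac{\rho\epsilon}{2}\Bigr)\|x^{k+1}-x^{k}\|_2,
\end{equation*}
which is the claim with $c:=L_{\bar{\theta}}\sum_{i\in\bar{\rS}}\|G_i\|_2^2+\rho+\tfrac{\rho\epsilon}{2}$.

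The only subtle point is the Lipschitz step: the function $\phi'$ blows up at $0$, so one cannot naively apply a Lipschitz constant on the whole positive axis. This is precisely where the uniform lower bound $\bar{\theta}$ from Lemma~\ref{thm:lb-sequence} is essential, allowing us to stay inside $[\bar{\theta},+\infty)$ and invoke Assumption~\ref{thm:xp}(c) with the local Lipschitz constant $L_{\bar{\theta}}$. The rest is routine calculus of subdifferentials plus the triangle inequality.
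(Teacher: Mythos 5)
Your proposal is correct and follows essentially the same route as the paper's proof: you construct the same vector $s^{k+1}$ (replacing the frozen weight $\phi'(|G_i^\top x^k|)$ by $\phi'(|G_i^\top x^{k+1}|)$ and dropping the proximal term), justify its membership in $\partial\mathcal{F}(x^{k+1})$ by the same subdifferential calculus, and bound the difference using the uniform lower bound $\bar{\theta}$ from Lemma~\ref{thm:lb-sequence} together with the local Lipschitz constant $L_{\bar{\theta}}$ from Assumption~\ref{thm:xp}(c), arriving at the same constant $c$ up to summing over $\bar{\rS}$ instead of $\rJ$. Your closing remark correctly identifies the one genuinely delicate point, namely that the Lipschitz estimate is only available away from zero, which is exactly why the lower-bound lemma is needed.
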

\begin{proof}
		It is known that  $\phi(|G_i^\top x|)$ is subdifferentially regular at all the points including $x^{k+1}$ for any $i\in \rJ$.
		Meanwhile,
		$\|Ax-b\|_q^q$ is also subdifferentially regular.
		We therefore have
	\begin{align*}
	\partial \mathcal{F}(x^{k+1})
&= \partial\left(\frac{\beta}{q} \|Ax^{k+1}-b\|_q^q \right) + \sum_{i\in\bar{\rS}} \phi^\prime(|G_i^\top x^{k+1}|)\frac{G_i^\top x^{k+1}}{|G_i^\top x^{k+1}|}G_i + \partial\left( \sum_{i\in\rJ\backslash\bar{\rS}}\phi(|G_i^\top x^{k+1}|) \right) \\
	&=  \partial\left(\frac{\beta}{q} \|Ax^{k+1}-b\|_q^q \right) + \sum_{i\in\bar{\rS}} \phi^\prime(|G_i^\top x^{k+1}|)\frac{G_i^\top x^{k+1}}{|G_i^\top x^{k+1}|}G_i+\sum_{i\in\rJ\backslash\bar{\rS}}(\ker G_i^\top)^\perp.
	\end{align*}

	By Algorithm~\ref{alg:initssapl}, $\|h^{k+1}\|_2\leq\frac \rho 2 \epsilon\|x^{k+1}-x^k\|_2$ and
	\begin{align*}
	h^{k+1}\in
	 & \partial\left(\frac{\beta}{q}\|Ax^{k+1}-b\|_q^q\right) + \sum_{i\in\bar{\rS}}\phi^\prime(|G_i^\top x^{k}|)\frac{G_i^\top x^{k+1}}{|G_i^\top x^{k+1}|}G_i+\rho(x^{k+1}-x^{k})
	+ \partial \kappa_{\bar{C}}(x^{k+1}),
	\end{align*}
	where  $\partial\kappa_{\bar{C}}(x^{k+1})=\sum\limits_{i\in\rJ\backslash\bar{\rS}}(\ker G_i^\top)^\perp
	$.
	
It is then easy to see
	$$
	s^{k+1}=h^{k+1}
	+\sum_{i\in\bar{\rS}} \left( \phi^\prime(|G_i^\top x^{k+1}|) - \phi^\prime(|G_i^\top x^{k}|) \right) \frac{G_i^\top x^{(k+1)}}{|G_i^\top x^{(k+1)}|}G_i-\rho(x^{k+1}-x^k),
	$$
	is in $\partial \mathcal{F}(x^{k+1})$.
By Assumption~\ref{thm:xp} (c) 
	and Lemma~\ref{thm:lb-sequence}, 
	we further derive
	\begin{align*}
	\|s^{k+1}\|_2 & \leq
	\|h^{k+1}\|_2
	+\sum_{i\in\bar{\rS}} | \phi^\prime(|G_i^\top x^{k+1}|) - \phi^\prime(|G_i^\top x^{k}|) |\|G_i\|_2 + \rho\|x^{k+1}-x^{k}\|_2 \\
	&  \leq \sum_{i\in\bar{\rS}} L_{\bar{\theta}} \left| |G_i^\top x^{k+1}| - |G_i^\top x^{k}| \right|\cdot \|G_i\|_2 + \rho(1+\frac\epsilon 2)\|x^{k+1}-x^{k}\|_2 \\
	& \leq  \left(  L_{\bar{\theta}} \sum_{i\in\rJ} \|G_i\|^2 + \rho(1+\frac\epsilon 2) \right) \|x^{k+1}-x^{k}\|_2.
	\end{align*}
	Taking $c= L_{\bar{\theta}} \sum\limits_{i\in\rJ} \|G_i\|^2 + \rho(1+\frac\epsilon 2) $ completes the proof.
\end{proof}

We can now establish the global convergence of the sequence generated by Inexact ITSS-PL (Algorithm~\ref{alg:initssapl}).

	\begin{theorem}[Global convergence]
		\label{thm:converge}
		Suppose that $\mathcal{F}(x)$ is a KL function.
		The sequence $\{ x^k \}$ generated by Algorithm~\ref{alg:initssapl} converges to a stationary point of $(\mathfrak{F})$.
	\end{theorem}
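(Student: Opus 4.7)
The plan is to invoke the now-standard Attouch--Bolte--Svaiter convergence scheme driven by the Kurdyka--{\L}ojasiewicz inequality, using the three structural ingredients already in place: the sufficient decrease estimate (Lemma~\ref{thm:decrease}), the subgradient lower bound for the iteration gap (Lemma~\ref{thm:lb-subgradient}), and the uniform lower bound on nonzero differences (Lemma~\ref{thm:lb-sequence}). Since the support and $\tau$-support stabilize after finitely many iterations (Lemma~\ref{thm:nested}), it suffices to analyze the tail $k\geq K$, where the reduced subproblem \eqref{eqn:fixmin} with the fixed support $\bar{\rS}$ applies.

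First I would study the limit point set $\omega(x^0)$ of $\{x^k\}$. Boundedness from Lemma~\ref{thm:sum} gives $\omega(x^0)\neq\emptyset$, and $\|x^{k+1}-x^k\|\to 0$ from the same lemma yields that $\omega(x^0)$ is compact and connected. Next I would show every $x^\ast\in\omega(x^0)$ is a stationary point of $\mathcal{F}$: taking a subsequence $x^{k_j}\to x^\ast$, Lemma~\ref{thm:lb-subgradient} furnishes $s^{k_j+1}\in\partial\mathcal{F}(x^{k_j+1})$ with $\|s^{k_j+1}\|_2\leq c\|x^{k_j+1}-x^{k_j}\|_2\to 0$. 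I would verify $\mathcal{F}(x^{k_j+1})\to \mathcal{F}(x^\ast)$ using lower semicontinuity of $\mathcal{F}$ together with an upper bound from the linearized $\bar{\mathcal{H}}_k^\tau$ evaluated at $x^\ast$ in $\bar{C}$, exactly in the spirit of the sufficient-decrease proof. Then closedness of the graph of $\partial\mathcal{F}$ forces $0\in\partial\mathcal{F}(x^\ast)$. Along the way, the sufficient decrease coupled with boundedness of $\mathcal{F}$ from below shows $\mathcal{F}$ is constant on $\omega(x^0)$, equal to some $\mathcal{F}^\ast$.

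With these pieces, I would apply the uniformized KL property on a neighborhood of $\omega(x^0)$ to obtain a desingularizing function $\varphi$ with
\[
\varphi'\bigl(\mathcal{F}(x^k)-\mathcal{F}^\ast\bigr)\cdot \mathrm{dist}\bigl(0,\partial \mathcal{F}(x^k)\bigr)\geq 1
\]
for all sufficiently large $k$. Concavity of $\varphi$ gives
\[
\varphi\bigl(\mathcal{F}(x^k)-\mathcal{F}^\ast\bigr)-\varphi\bigl(\mathcal{F}(x^{k+1})-\mathcal{F}^\ast\bigr)\geq \varphi'\bigl(\mathcal{F}(x^k)-\mathcal{F}^\ast\bigr)\bigl(\mathcal{F}(x^k)-\mathcal{F}(x^{k+1})\bigr),
\]
and substituting Lemma~\ref{thm:decrease} for the right factor and Lemma~\ref{thm:lb-subgradient} through the KL inequality for $\varphi'$ yields
\[
\varphi\bigl(\mathcal{F}(x^k)-\mathcal{F}^\ast\bigr)-\varphi\bigl(\mathcal{F}(x^{k+1})-\mathcal{F}^\ast\bigr)\geq \frac{(1-\epsilon)\rho}{2c}\cdot\frac{\|x^{k+1}-x^k\|_2^2}{\|x^k-x^{k-1}\|_2}.
\]
An AM--GM splitting $\|x^{k+1}-x^k\|_2\leq \tfrac{1}{4}\|x^k-x^{k-1}\|_2+C\bigl(\varphi(\mathcal{F}(x^k)-\mathcal{F}^\ast)-\varphi(\mathcal{F}(x^{k+1})-\mathcal{F}^\ast)\bigr)$ telescopes over $k$ to give $\sum_{k}\|x^{k+1}-x^k\|_2<\infty$. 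Hence $\{x^k\}$ is Cauchy, converges to some $x^\ast$, and by the limit-point analysis $x^\ast$ is a stationary point of $(\mathfrak{F})$.

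The main obstacle I anticipate is verifying the continuity hypothesis needed to close the subdifferential graph, because $\phi'(0+)=+\infty$ makes $\mathcal{F}$ genuinely non-Lipschitz. Lemma~\ref{thm:lb-sequence} is decisive here: it confines the iterates, for $k\geq K$, to the region where $\phi'$ is Lipschitz on $[\bar\theta,+\infty)$, so that $\sum_{i\in\bar{\rS}}\phi(|G_i^\top x^k|)\to\sum_{i\in\bar{\rS}}\phi(|G_i^\top x^\ast|)$ and the inclusion $0\in\partial\mathcal{F}(x^\ast)$ at limit points behaves as in the smooth case. With this technical point secured, the remaining KL telescoping argument is routine.
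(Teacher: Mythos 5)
Your proposal is correct and follows essentially the same route as the paper: the paper verifies the same three ingredients (sufficient decrease from Lemma~\ref{thm:decrease}, the relative-error bound from Lemma~\ref{thm:lb-subgradient}, and subsequential convergence with $\mathcal{F}(x^{k_j})\to\mathcal{F}(x^\ast)$) and then simply cites the abstract Attouch--Bolte--Svaiter convergence theorem, whereas you unpack that theorem's internal KL-desingularization and telescoping argument explicitly. Your additional attention to the continuity condition via Lemma~\ref{thm:lb-sequence} is sound (indeed $\mathcal{F}$ is continuous here, so that step is even easier than you anticipate), and nothing in your outline would fail.
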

	\begin{proof}
		By Lemma~\ref{thm:sum}, $\{x^k\}$ is bounded. Then there exists a subsequence $\{x^{k_j}\}$ and a point $x^\ast$ such that
		$$
		x^{k_j}\to x^\ast\text{ and }\mathcal{F}(x^{k_j})\to\mathcal{F}(x^\ast)\quad\text{as }j\to\infty.
		$$
		Since $\mathcal{F}(x)$ is a KL function, by Lemma~\ref{thm:decrease}, Lemma~\ref{thm:lb-subgradient} and \cite[Theorem 2.9]{Attouch2013Convergence}, the sequence $\{x^k\}$ converges globally to $x^\ast$, which is a stationary point of problem $(\mathfrak{F})$.
	\end{proof}
	\begin{remark}
		A proper lower semicontinous function satisfying the KL property at all points in its domain is called a KL function.
		The objective functions $\mathcal{F}(x)$ in our examples are KL functions.
		Indeed, it is known that any proper lower semicontinuous function that is definable on an o-minimal structure is a KL function. See~\cite{bolte2007clarke} and~\cite[Theorem 4.1]{Attouch2010Proximal}. A class of o-minimal structure is the log-exp structure (\cite[Example 2.5]{Lou1996Geometric}). By this structure,
		the examples of potential function $\phi(t)=t^p$ and $\phi(t)=\log(1+t^p)$ are definable, and $|G_i^\top x|$, as a semi-algebraic function, is also definable.
		Then the composition $\phi(|G_i^\top x|)$ and the finite sum $\sum\limits_{i\in\rJ}\phi(|G_i^\top x|)$ are definable functions.
		Similarly, the fidelity $\frac{\beta}{q}\|Ax-b\|_q^q$ is a definable function.
		As the sum of regularization and fidelity terms, the objective function $\mathcal{F}(x)$ is definable, therefore it is a KL function.
	For the related preliminaries, one may refer to the appendix.
	\end{remark}

          \section{Algorithm implementation}\label{sec:alg-detail}

          The subproblem $({\mathfrak{H}}_k^\tau)$ at each iteration of Inexact ITSS-PL can be solved by many effective convex optimization algorithms.
We consider to use the Alternating Direction Method of Multipliers (ADMM \cite{glowinski1989augmented,Wu2010Augmented,boyd2011distributed,he2012on,glowinski2016splitting}) and
the followings are the implementation details.

In the description of ADMM, the variable $x$ in the subproblem $({\mathfrak{H}}_k^\tau)$ is replaced by $u$ to avoid confusion with the outer iterations.
We first consider the general form of $({\mathfrak{H}}_k^\tau)$ with $q\in[1,+\infty)\backslash\{2\}$ in the fidelity.
To solve a such subproblem,
we introduce the variables $v\in\RR^M$ and $\{w_{i}\}_{i\in \rT^{k}}\in\RR^{|\rT^k|}$, and reformulate the subproblem $({\mathfrak{H}}_k^\tau)$ as the following equivalent form,
\begin{equation}
\begin{aligned}
& \min_{u,v,\{w_{i}\}_{i\in \rT^{k}}}   \sum_{i\in \rT^{k}} \phi^{'}(|G_{i}^{\top}x^{k}|)|w_{i}|
+ \beta \|v\|_q^q
+ \frac{1}{2}\rho \left\| u-x^{k} \right\|_2^2   \\
& \qquad\text{s.t.}
\begin{cases}
G_{i}^{\top}u=0,  &\forall i\in \rJ\backslash\rT^{k}  \\
G_{i}^{\top}u=w_{i},  &\forall i\in \rT^{k}  \\
Au-b=v.
\end{cases}
\end{aligned}
\end{equation}
The augmented Lagrangian function of the above constrained problem reads
\begin{equation}
\begin{aligned}
& \mathcal{L}(u, v, \{w_{i}\}_{i\in \rT^{k}}, \lambda_v, \lambda_w; r_v, r_w)  \\
= & \left. \beta \|v\|_q^q + \sum_{i\in \rT^{k}} \phi^{'}(|G_{i}^{\top}x^{k}|)|w_{i}| + \frac{1}{2}\rho \left\| u-x^{k} \right\|_2^2 \right.
\left. + \langle \lambda_v, Au-b-v \rangle + \frac{1}{2}r_v\left\| Au-b-v \right\|_2^2 \right.  \\
& \left. + \sum_{i\in \rT^{k}} \langle (\lambda_w)_{i}, G_{i}^{\top}u-w_{i} \rangle + \frac{1}{2}r_w\sum_{i\in \rT^{k}} |G_{i}^{\top}u-w_{i}|^2 \right.
\left. + \sum_{i\in \rJ\backslash \rT^{k}} \langle (\lambda_w)_{i}, G_{i}^{\top}u \rangle + \frac{1}{2}r_w\sum_{i\in \rJ\backslash \rT^{k}} |G_{i}^{\top}u|^2 \right.  
\end{aligned}
\end{equation}
where $\lambda_v\in\RR^M$ and $\lambda_w\in\RR^{\sharp\rJ}$ are the Lagrangian multipliers and $r_v,r_w>0$.
Then the ADMM to solve $({\mathfrak{H}}_k^\tau)$ is given in Algorithm $\ref{algo:ADMM-L1ITSSAPL}$.
\begin{algorithm}[htbp]
	\caption{ADMM to solve the subproblem $({\mathfrak{H}}_k^\tau)$ with $q\in[1,+\infty)\backslash\{2\}$} \label{algo:ADMM-L1ITSSAPL}
	\begin{algorithmic}
		\REQUIRE 
		$r_v>0$, $r_w>0$, MAXit1, $\epsilon_1>0$,
		$u^{0}=x^{k}$, $\lambda_v^{0}=0\in\RR^{M}$,
		$\lambda_w^0=0\in\RR^{\sharp\rJ}$
		\WHILE{$l\leq$MAXit1 and $\frac{\left\|u^{l+1}-u^{l}\right\|_2}{\left\|u^{l+1}\right\|_2} > \epsilon_{1}$}
		\STATE 1. Compute $(v^{l+1},\{w_{i}^{l+1}\}_{i\in \rT^{k}})$ by
		$\min_{v,\{w_{i}\}_{i\in \rT^{k}}} \{\mathcal{L}(u^{l},v,\{w_{i}\}_{i\in \rT^{k}},\lambda_v^{l},\lambda_w^{l};r_v,r_w)\}$;
		\STATE 2. Compute $u^{l+1}$ by
		$\min_{u} \{\mathcal{L}(u,v^{l+1},\{w_{i}^{l+1}\}_{i\in \rT^{k}},\lambda_v^{l},\lambda_w^{l};r_v,r_w)\}$;
		\STATE 3. Update $\lambda_v^{l+1}$ and $\lambda_w^{l+1}$ by
		$\lambda_v^{l+1} = \lambda_v^{l}+r_v(Au^{l+1}-b-v^{l+1})$ and
		$$
		(\lambda_w^{l+1})_{i} =
		\begin{cases}
		(\lambda_w^l)_{i}+r_w(G_{i}^{\top}u^{l+1}-w_{i}^{l+1}), & \text{if} \quad i\in \rT^{k},  \\
		(\lambda_w^l)_{i}+r_w G_{i}^{\top}u^{l+1}, & \text{if} \quad i\in \rJ\backslash \rT^{k}.
		\end{cases}
		$$
		\ENDWHILE
	\end{algorithmic}
\end{algorithm}

	For the ($v$, $\{w_{i}\}_{i\in \text{T}^{k}}$)-subproblem in Algorithm \ref{algo:ADMM-L1ITSSAPL}, $\{w_{i}^{l+1}\}_{i\in \rT^{k}}$ has the explicit form solution given by the soft-thresholding (\cite{donoho1994ideal,Donoho1995DenoisingBS}):
	\begin{equation*}
	w_{i}^{l+1}
	 = \max\left\{ |G_{i}^{\top}u^{l} + \frac{1}{r_{w}}(\lambda_{w}^{l})_{i}| - \frac{1}{r_{w}}\phi^{'}(|G_{i}^{\top}x^{k}|), 0\right\}\cdot \text{sign}\left(G_{i}^{\top}u^{l} + \frac{1}{r_{w}}(\lambda_{w}^{l})_{i}\right),\forall i\in \text{T}^{k},
	\end{equation*}
	and $v^{l+1}$ can be solved by numerical algorithms from the following convex optimization problem
	\begin{equation}
	\min_{v} \left\{ \beta\|v\|_q^q + \langle \lambda_{v}, Au^{l}-b-v \rangle + \frac{1}{2}r_{v}\left\| Au^{l}-b-v \right\|_2^2 \right\}.
	\end{equation}
	If $q=1$, $v^{l+1}$ also has the explicit form solution:
	\begin{equation*}
	v_{j}^{l+1}
	=  \max\left\{ |(Au^{l})_j-b_{j}+\frac{1}{r_{v}}(\lambda_{v}^{l})_{j}| - \frac{\beta}{r_{v}},0 \right\}\cdot \text{sign}\left((Au^{l})_j-b_{j}+\frac{1}{r_{v}}(\lambda_{v}^{l})_{j}\right), \forall 1\leq j\leq M.
	\end{equation*}
where $(Au^l)_j$ is the $j$'th element of vector $Au^l$.

	For solving the $u$-subproblem, we augment $\{w_{i}^{l+1}\}_{i\in \rT^{k}}$ by $w_{i}^{l+1} = 0$, $\forall i\in \text{J}\backslash\text{T}^{k}$, so that we can make full use of the structure of the operators $\{G_{i}\}_{i\in\text{J}}$, like \cite{zeng2019iterative,zheng2020a}. Then the $u$-subproblem can be equivalent to
	\begin{align}\label{u-subproblem-formula}
	\begin{split}	&\min_{u} \left\{ \frac{1}{2}u^{\top}\left(\rho I_{N} + r_{v} A^{\top}A + r_{w}\sum_{i\in \rJ} G_{i}G_{i}^{\top}\right)u\right.  \\
	& \left.\qquad\qquad  - \left(\rho x^{k} + A^{\top}(r_{v} b + r_{v}v^{l+1} - \lambda_{v}^{l}) + \sum_{i\in \text{J}} \left(r_{w}w_{i}^{l+1}-(\lambda_{w}^{l})_{i}\right)G_{i}\right)^{\top}u  \right\},
	\end{split}
	\end{align}
	whose solution can be obtained by solving the corresponding normal equation.
	In the case of convolutional operators $A,G$ with periodic boundary condition raised in image-deblur like applications, $\rho I_{N} + r_{v} A^{\top}A + r_{w}\sum_{i\in \rJ} G_{i}G_{i}^{\top}$
	is a block circulant matrix
	and it can be diagonalized by the
	two-dimensional discrete Fourier transforms. The $u$-subproblem (\ref{u-subproblem-formula}) can then be efficiently solved by utilizing the  fast Fourier transform, like \cite{Wang2008,Wu2010Augmented}.

We are then left with the special case of $({\mathfrak{H}}_k^\tau)$ with $q=2$ in the fidelity, i.e., the case of quadratic fidelity term. This is an easier case and the subproblem $({\mathfrak{H}}_k^\tau)$ can be reformulated into
\begin{equation}
\begin{aligned}
& \min_{u,\{w_{i}\}_{i\in \rT^{k}}}   \sum_{i\in \rT^{k}} \phi^{'}(|G_{i}^{\top}x^{k}|)|w_{i}|
+ \beta \|Au-b\|_2^2
+ \frac{1}{2}\rho \left\| u-x^{k} \right\|_2^2   \\
& \qquad\text{s.t.}
\begin{cases}
G_{i}^{\top}u=0,  &\forall i\in \rJ\backslash\rT^{k}  \\
G_{i}^{\top}u=w_{i},  &\forall i\in \rT^{k}.
\end{cases}
\end{aligned}
\end{equation}
whose corresponding augmented Lagrangian function
\begin{equation}
\begin{aligned}
\mathcal{L}(u, \{w_{i}\}_{i\in \rT^k}, \lambda_w; r_w)
= &  \beta\left\| Au-b \right\|_2^2 + \sum_{i\in \rT^k} \phi^{'}(|G_{i}^{\top}x^{k}|)|w_{i}| + \frac{\rho}{2} \left\| u-x^{k} \right\|_2^2
+ \sum_{i\in \rT^{k}} \langle (\lambda_w)_{i}, G_{i}^{\top}u-w_{i} \rangle \\
& + \frac{1}{2}r_w\sum_{i\in \rT^{k}} |G_{i}^{\top}u-w_{i}|^2
+ \sum_{i\in \rJ\backslash \rT^{k}} \langle (\lambda_w)_{i}, G_{i}^{\top}u \rangle + \frac{1}{2}r_w\sum_{i\in \rJ\backslash \rT^{k}} |G_{i}^{\top}u|^2,
\end{aligned}
\end{equation}
where $\{w_{i}\}_{i\in \rT^k}\in\RR^{|\rT^k|}$ is the auxiliary variable, $\lambda_w\in\RR^{\sharp\rJ}$ is the Lagrangian multiplier and $r_w>0$.
The ADMM to solve $({\mathfrak{H}}_k^\tau)$ with $q=2$
is shown in Algorithm $\ref{algo:ADMM-L2ITSSAPL}$.
\begin{algorithm}[htbp]
	\caption{ADMM to solve the subproblem $({\mathfrak{H}}_k^\tau)$ with $q=2$}
	\label{algo:ADMM-L2ITSSAPL}
	\begin{algorithmic}
		\REQUIRE  
		$r_w>0$, MAXit2, $\epsilon_2>0$,
		$u^{0}=x^{k}$, $\lambda_w^0=0\in\RR^{\sharp\rJ}$
		\WHILE{$l\leq$MAXit2 and $\frac{\left\|u^{l+1}-u^{l}\right\|_2}{\left\|u^{l+1}\right\|_2} > \epsilon_{2}$}
		\STATE 1. Compute $\{w_{i}^{l+1}\}_{i\in \rT^{k}}$ by
		$\min_{\{w_{i}\}_{i\in \rT^{k}}} \{\mathcal{L}(u^{l},\{w_{i}\}_{i\in \rT^{k}},\lambda_w^{l};r_w)\}$;
		\STATE 2. Compute $u^{l+1}$ by
		$\min_{u} \{\mathcal{L}(u,\{w_{i}^{l+1}\}_{i\in \rT^{k}},\lambda_w^{l};r_w)\}$;
		\STATE 3. Update $\lambda_w^{l+1}$ by
		$$
		(\lambda_w^{l+1})_{i} =
		\begin{cases}
		(\lambda_w^{l})_{i}+r_w(G_{i}^{\top}u^{l+1}-w_{i}^{l+1}), & \text{if} \quad i\in \rT^{k},  \\
		(\lambda_w^{l})_{i}+r_w G_{i}^{\top}u^{l+1}, & \text{if} \quad i\in \rJ\backslash \rT^{k}.
		\end{cases}
		$$
		\ENDWHILE
	\end{algorithmic}
\end{algorithm}

\begin{remark}
		Since the ADMM converges to the unique minimizer (with zero subgradient) of the strongly convex $({\mathfrak{H}}_k^\tau)$,
	the condition \eqref{eqn:inequk} on the subdifferential $h^{k+1}$ in Algorithm \ref{alg:initssapl} is satisfied, after a large enough number of iterations of ADMM and a projection to the feasible set. However, checking condition \eqref{eqn:inequk} at each iteration of the subsolver is time consuming.
	Therefore, in practical computation, we propose a simple stopping criterion, that is to check whether it satisfies  $\frac{\left\|u^{l+1}-u^{l}\right\|_2}{\left\|u^{l+1}\right\|_2} < \epsilon_{1}\ (\text{or } \epsilon_2)$ and whether the iteration number $l$ exceeds the predefined maximum iteration number MAXit1 (or MAXit2).
Such an approach can save running time and achieve fairly good restoration results.
\end{remark}

          \section{Experiments}\label{sec:experiment}

In this section, we test the performance of the proposed algorithm in image deconvolution, and specifically, we consider two types of noises: 1) the
salt and pepper impulse noise and 2) the i.i.d.
Gaussian noise.
Therefore,
we recall here the model \eqref{eqn:model1} and model \eqref{thm:lpl2}, both of which
can be solved by the Inexact ITSS-PL given in Algorithm \ref{alg:initssapl}.
For convenience, we rename the Inexact ITSS-PL for model \eqref{eqn:model1} as
\textit{Inexact iterative thresholding and support shrinking algorithm with proximal linearization for $\ell_1$ fidelity (Inexact ITSS-PL-$\ell_1$)}, and rename the ITSS-PL for model \eqref{thm:lpl2} as
\textit{Inexact iterative thresholding and support shrinking algorithm with proximal linearization for $\ell_2$ fidelity (Inexact ITSS-PL-$\ell_2$)}.

In the experiments, we take the potential function $\phi$ as $\phi(t) := t^{p}$, $\forall t\in\mathbb{R}$, with $p\in(0,1)$, and take the sparsifying system $\{G_i\}_{i\in\rJ}$ as the horizontal and vertical discrete derivative operators.
In this case, model \eqref{eqn:model1} and model \eqref{thm:lpl2} become the anisotropic
$\ell_{1}\text{TV}^{p}$ and $\ell_{2}\text{TV}^{p}$ models (namely the $\ell_{1}\text{aTV}^{p}$ and $\ell_{2}\text{aTV}^{p}$ models), and they are respectively solved by the Inexact ITSS-PL-$\ell_1$ and Inexact ITSS-PL-$\ell_2$. 
The subproblems  $({\mathfrak{H}}_k^\tau)$ in Inexact ITSS-PL-$\ell_1$ and Inexact ITSS-PL-$\ell_2$ are  solved by the ADMM given in Algorithm~\ref{algo:ADMM-L1ITSSAPL} and  Algorithm~\ref{algo:ADMM-L2ITSSAPL}, respectively.

\subsection{Test platform and parameter choices}\label{subsec:parameters}

The experiments are performed under Windows 8 and MATLAB R2018a running on a desktop equipped with an Intel Core i7-6700 CPU @ 3.40GHz and 8.00G RAM memory.

For the experiments of image deconvolution, the test images are first degraded by a blur kernel and
then added by some noise.
Three types of blurring kernels are used, which include (1) averaging blur 
(fspecial('average',5));
(2) Gaussian blur 
(fspecial('gaussian',[23,23],12));
(3) disk blur 
(fspecial('disk',6)).
Two types of noises are considered:
(1) salt-and-pepper impulse noise with noise level $30\%$; (2) i.i.d. Gaussian white noise with variance $10^{-6}$.
The quality of the restored images is measured by
the peak signal-to-noise ratio (PSNR), which is defined as follows:
\begin{equation}
\text{PSNR} = 10  \log_{10}\frac{N}{\|x_{\text{Alg}} -\underline{x}\|_2^2},  \notag
\end{equation}
where $N$ is the number of image pixels, $x_{\text{Alg}} $ is the restored image, and $\underline{x}$ is the true image.
The test images are shown in Figure \ref{figs-origin}.

\begin{figure}[htbp]
	\subfigure[]
	{
		\includegraphics[width=2.5cm]{figures//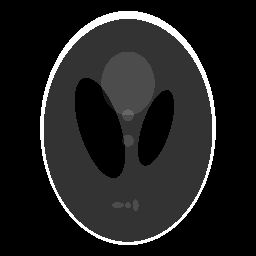}
	}
\hspace{-10pt}
	\subfigure[]
	{
		\includegraphics[width=2.5cm]{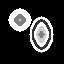}
	}
\hspace{-10pt}
	\subfigure[]
	{
		\includegraphics[width=2.5cm]{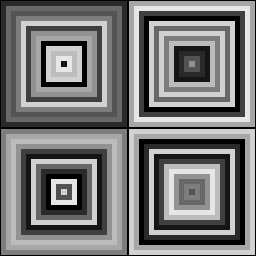}
	}
\hspace{-10pt}
	\subfigure[]
	{
		\includegraphics[width=2.5cm]{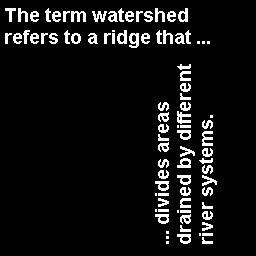} 
	}
\hspace{-10pt}
	\subfigure[]
	{
		\includegraphics[width=2.5cm]{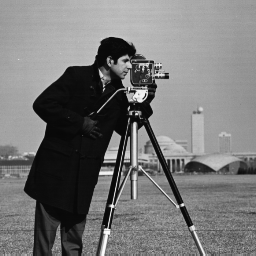}
	}
	\caption{
			Test images for restoration problem.
			(a): Phantom ($256\times 256$); (b): Twocircles ($64\times64$)
			(c): Squares ($256\times 256$); (d): Text ($256\times 256$);
			(e): Cameraman ($256\times 256$).}
	\label{figs-origin}
\end{figure}

For the model parameters,
	we use $p=0.5$ in the $\ell_{q}\text{aTV}^{p}\ (q=1,2)$ model,
	and the parameter $\beta$ is tuned up to achieve the best performance of each method, which will be specified in the subsequent experiments.
For the algorithm parameters, they will be set as follows and remain unchanged throughout the experiments, unless otherwise specified.
For both Inexact ITSS-PL-$\ell_1$ and Inexact ITSS-PL-$\ell_2$, the parameter $\rho$ in the proximal term is set as $\rho=10^{-10}$, the parameter $\tau$ for defining the $\tau$-support $\rT^{k}$ is taken as $\tau=10^{-7}$.
In this experiment, the tolerance of the outer loop in the Inexact ITSS-PL-$\ell_{q}$ ($q=1,2$) is set to be $\epsilon=10^{-3}$, and the maximum outer iteration number is set to be $\text{MAXit}=25$.
We use $r_v=3\times10^5$  and $r_w = 200$ in the ADMM.
The tolerance of the inner loop is set to be $\epsilon_1 \text{ (or }\epsilon_2) = 10^{-5}$, and the maximum inner iteration number is set to be $\text{Maxit1 (or Maxit2)}=500$.

The initial value of the Inexact ITSS-PL-$\ell_q$ ($q=1,2$) is taken as the solution to the $\ell_q\text{aTV}$ model (i.e., the anisotropic $\ell_q\text{TV}^p$ model with $p=1$) solved by the ADMM described in Algorithm \ref{algo:ADMM-L1ITSSAPL} and Algorithm \ref{algo:ADMM-L2ITSSAPL} with $p=1$ and $\rho=0$.
For solving the	$\ell_q\text{aTV}$ model, we set $r_v=3\times10^3$, $r_w = 200$, $\epsilon_1 \text{ (or }\epsilon_2) = 10^{-5}$ and $\text{Maxit1 (or Maxit2)}=500$ in the ADMM.

\subsection{The convergence and $\tau$-support shrinkage properties}

In this subsection, we test the numerical evolution behavior of objective function values and support sizes when applying the Inexact ITSS-PL-$\ell_{q}$ in image restoration.
We test on three sample images corrupted by different blur kernels.
To reveal the convergence property, the maximum outer iteration number is set to be $\text{MAXit}=30$ in both algorithms.
We show the evolution behavior of  $\mathcal{F}(x^k)$ and the percentage $\sharp\rT^k/\sharp\rJ$ in Figure \ref{L1convergesupp} and Figure \ref{L2convergesupp}, for $q=1$ and 2 respectively.
It can be observed that $\mathcal{F}(x^k)$ is monotonically decreasing and converges, which reveals the global convergence of the proposed algorithms.
Meanwhile, the support size also decreases and converges, which is consistent with the support shrinkage property of Inexact ITSS-PL.

\begin{figure}[htbp]
	\hspace{15pt}
	{
		\begin{minipage}[t]{0.2\linewidth}
			\centering
			\centerline{\includegraphics[width=4.8cm]{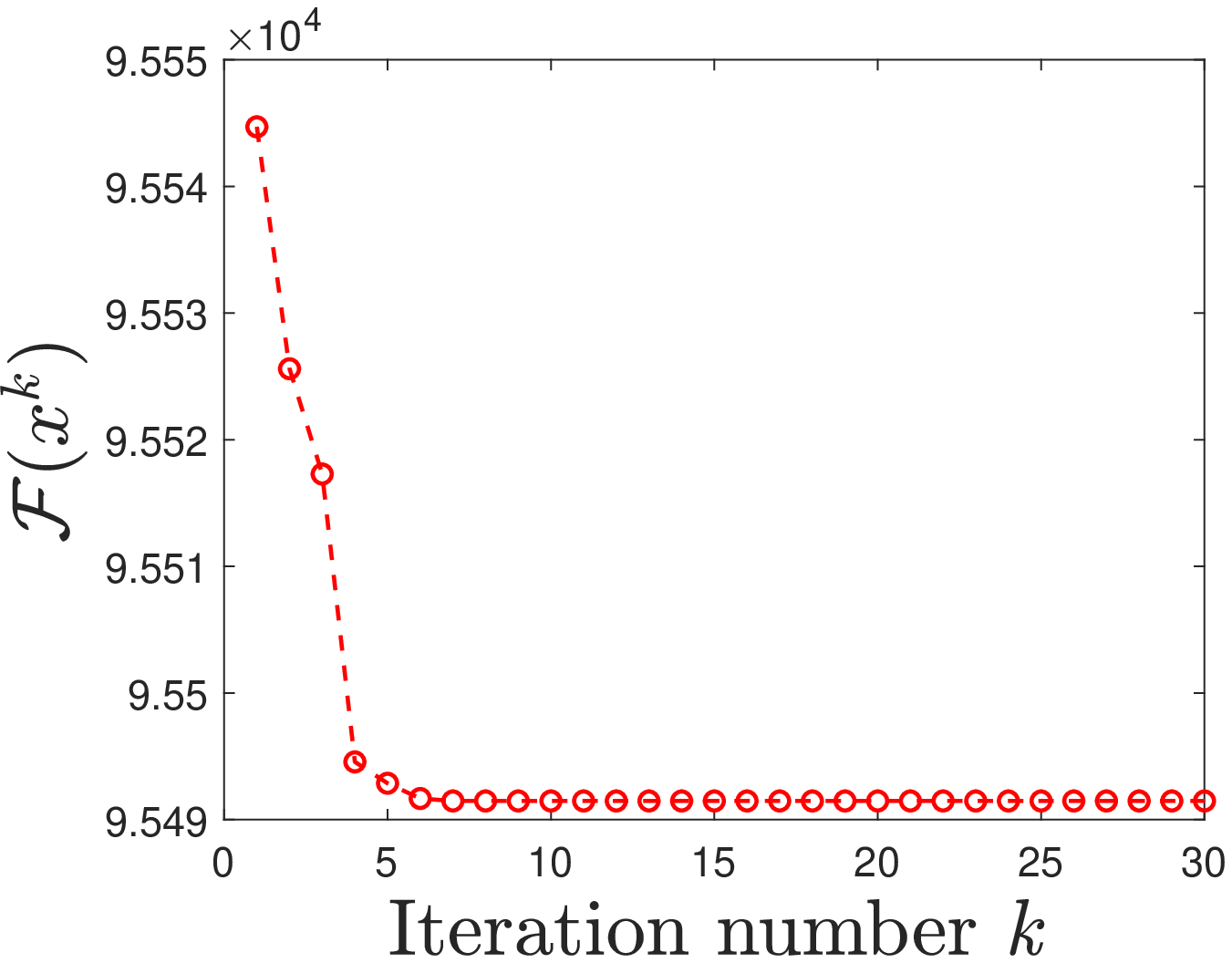}}
			\centerline{(1a) Squares} 
			\vspace{10pt}
			\centering
			\centerline{\includegraphics[width=4.8cm]{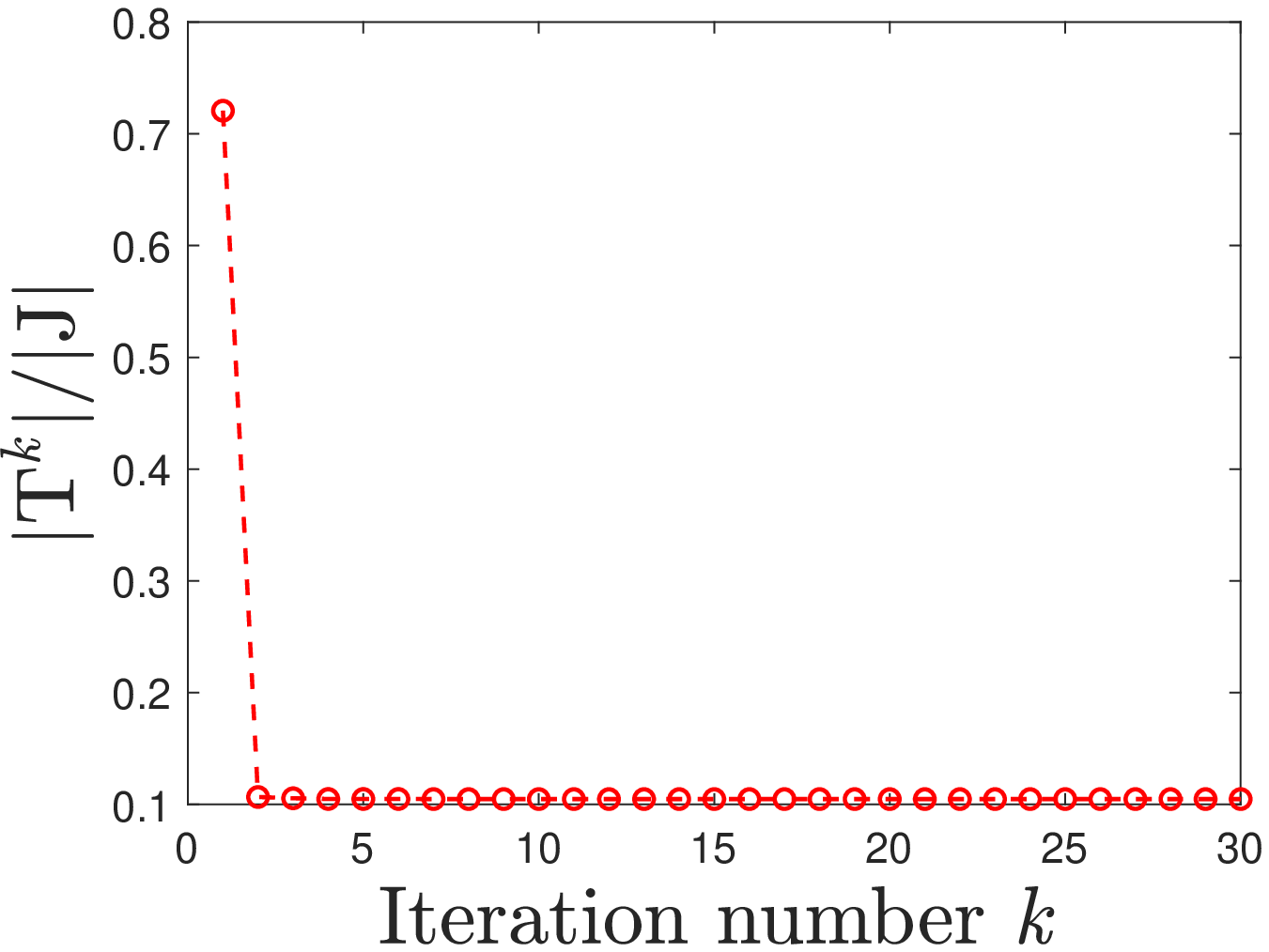}}
			\centerline{(1b) Squares} 
		\end{minipage}
	}
	\hspace{50pt}
	{
		\begin{minipage}[t]{0.2\linewidth}
			\centering
			\centerline{\includegraphics[width=4.8cm]{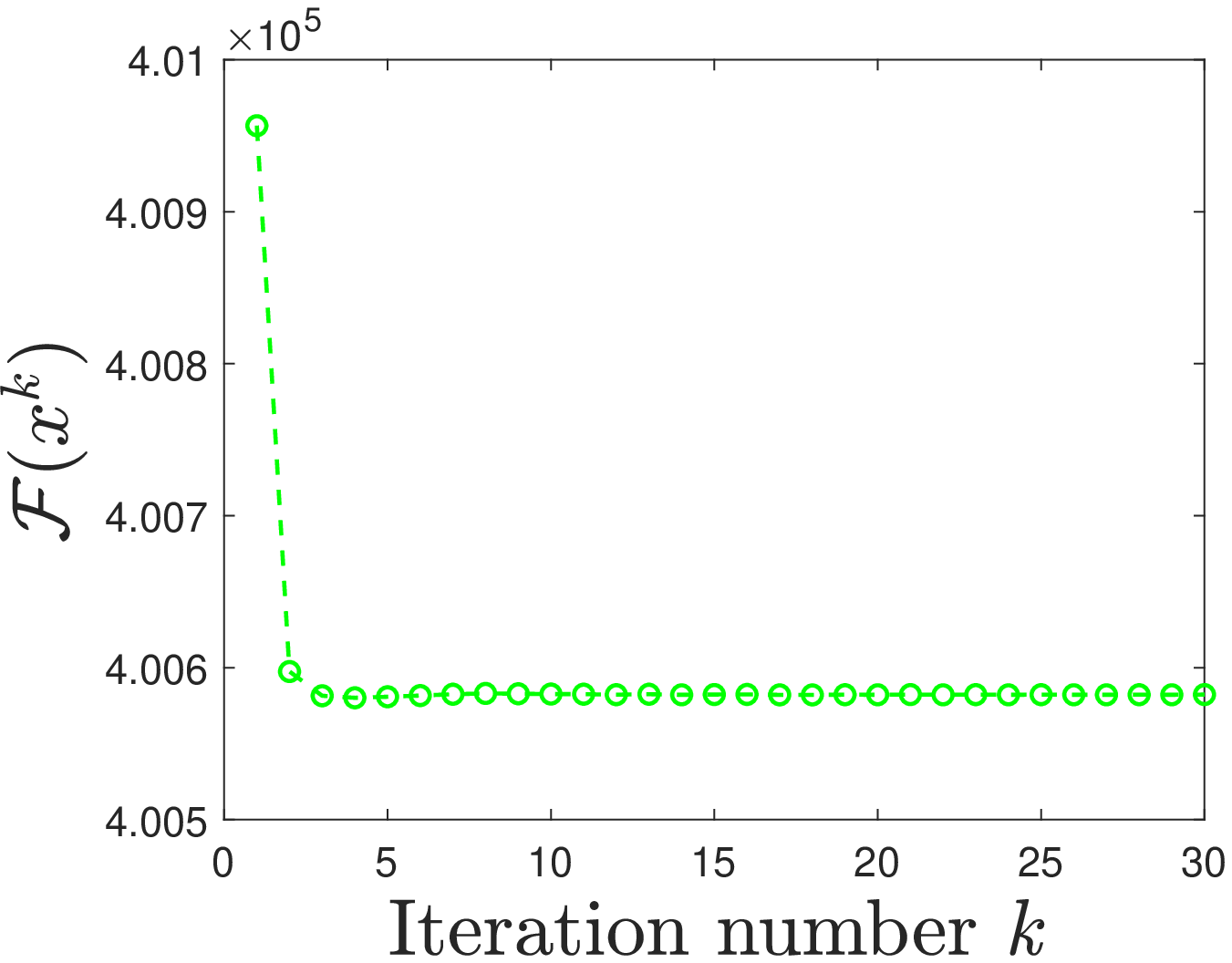}}
			\centerline{(2a) Twocircles} 
			\vspace{10pt}
			\centering
			\centerline{\includegraphics[width=4.8cm]{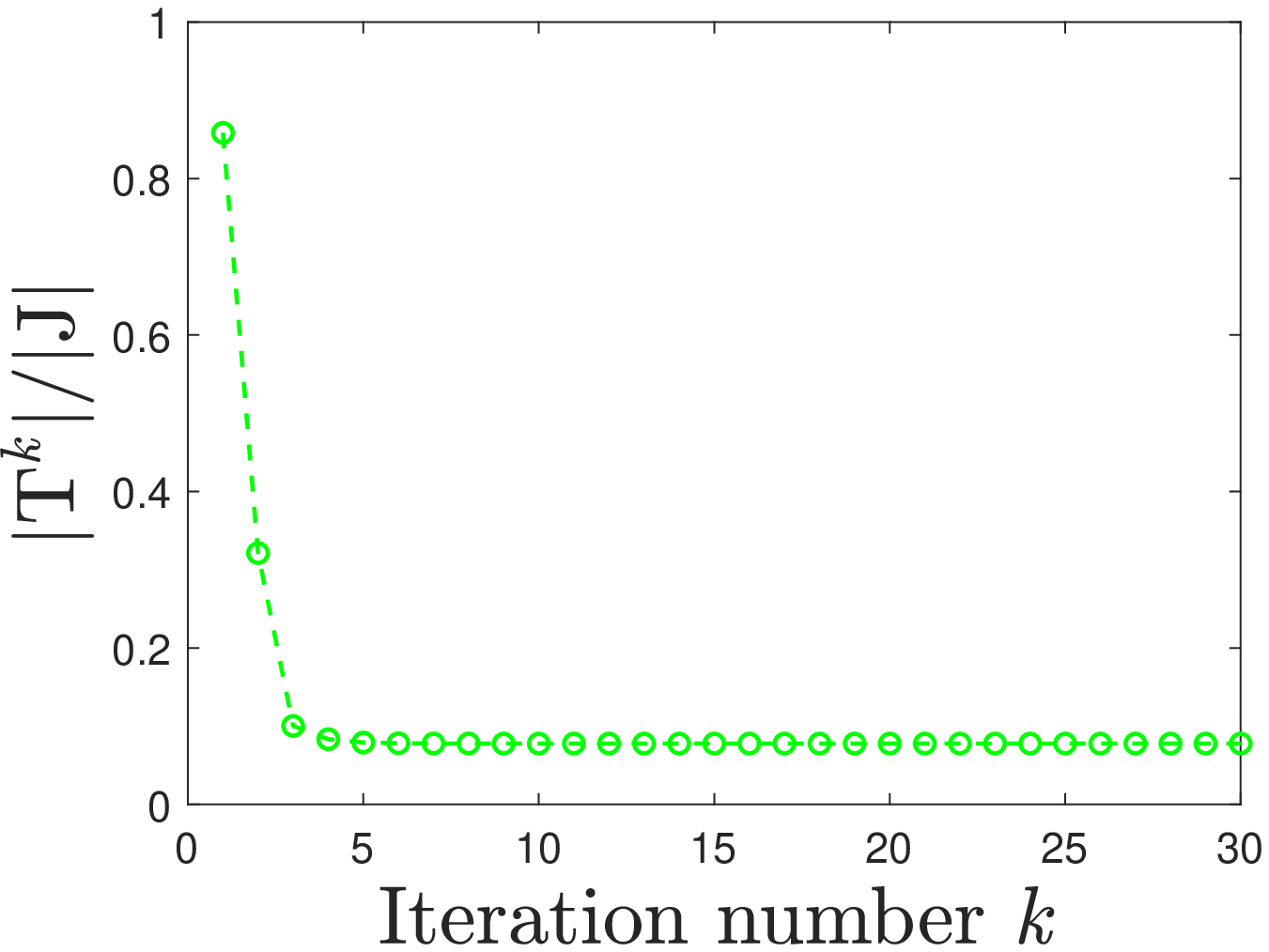}}
			\centerline{(2b) Twocircles} 
		\end{minipage}
	}
	\hspace{50pt}
	{
		\begin{minipage}[t]{0.2\linewidth}
			\centering
			\centerline{\includegraphics[width=4.8cm]{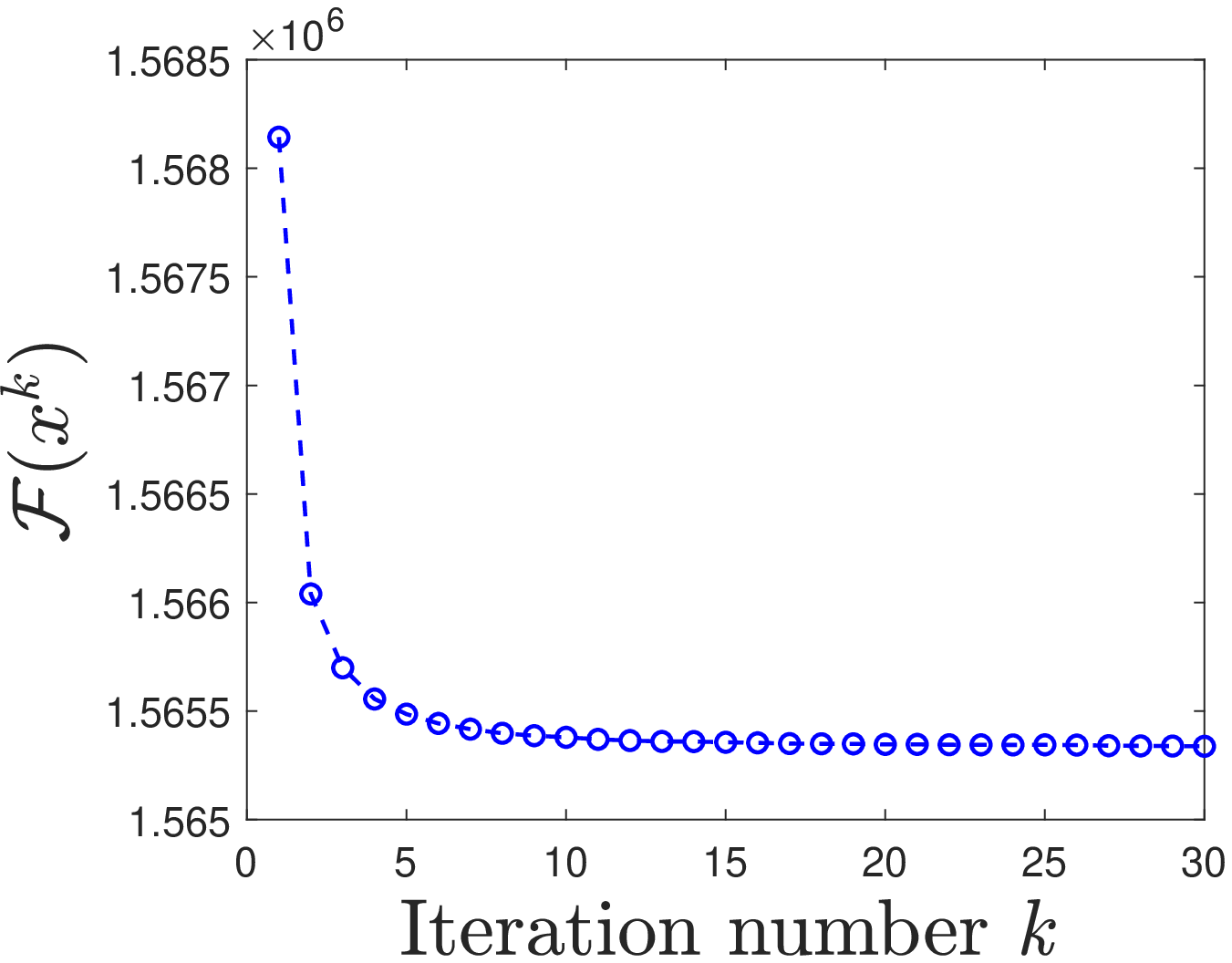}}
			\centerline{(3a) Cameraman} 
			\vspace{10pt}
			\centering
			\centerline{\includegraphics[width=4.8cm]{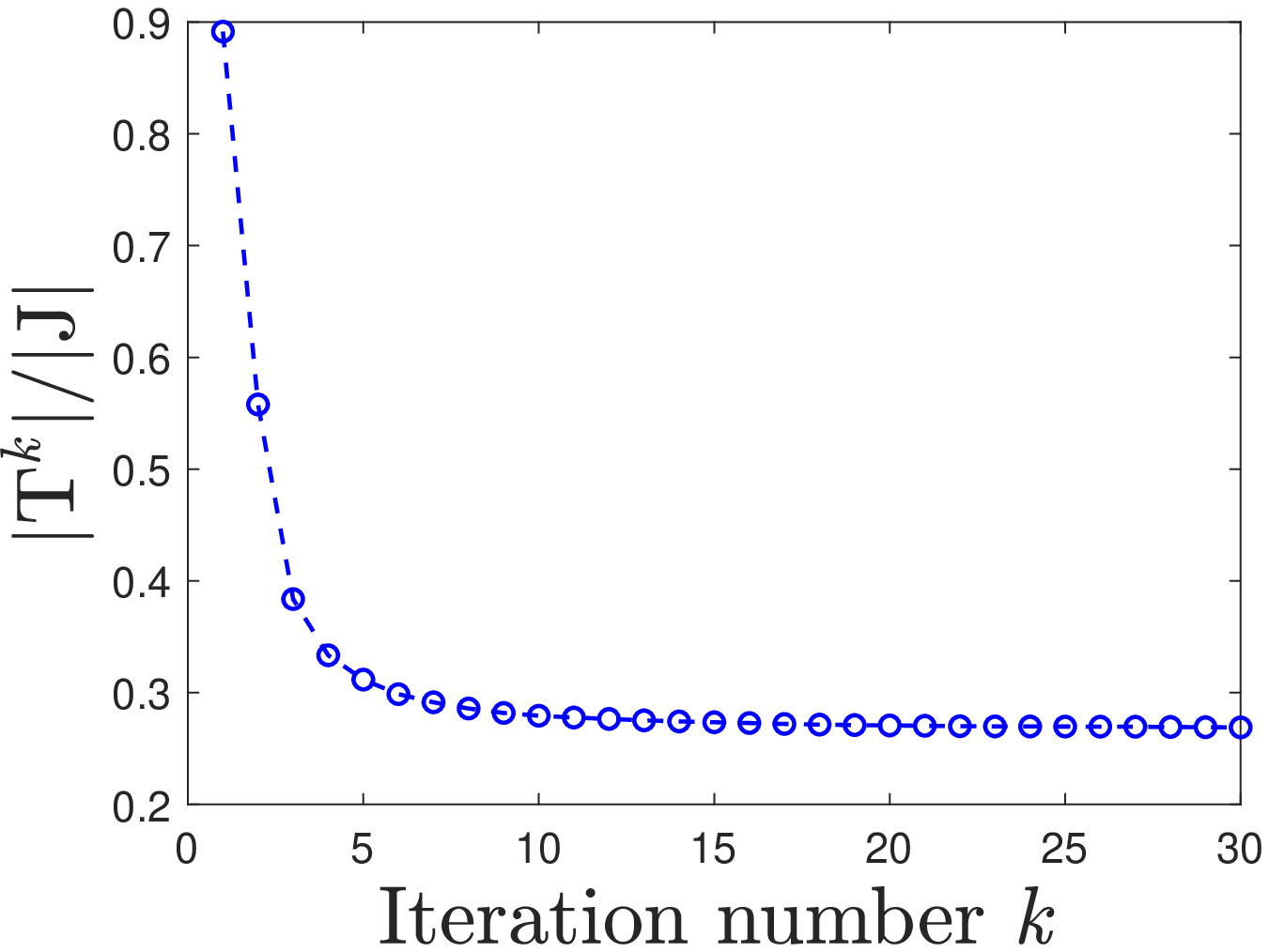}}
			\centerline{(3b) Cameraman} 
		\end{minipage}
	}
	\caption{
		The behavior of objective function values and support sizes of the Inexact ITSS-PL-$\ell_{1}$ for impulse noise removal.
			(1a)(1b): results of the "Squares" corrupted by the average blur; (2a)(2b): results of the "Twocircles" corrupted by the Gaussian blur; (3a)(3b): results of the "Cameraman" corrupted by the disk blur.
			(1a)(2a)(3a): the objective function value $\mathcal{F}(x^k)$ versus the outer iteration number $k$; 
			(1b)(2b)(3b): the percentage of cardinality of the support set $|\rT^k|\backslash|\rJ|$ versus the outer iteration number $k$. 
		}
	\label{L1convergesupp}
\end{figure}

\begin{figure}[htbp]
	\hspace{15pt}
	{
		\begin{minipage}[t]{0.2\linewidth}
			\centering
			\centerline{\includegraphics[width=4.8cm]{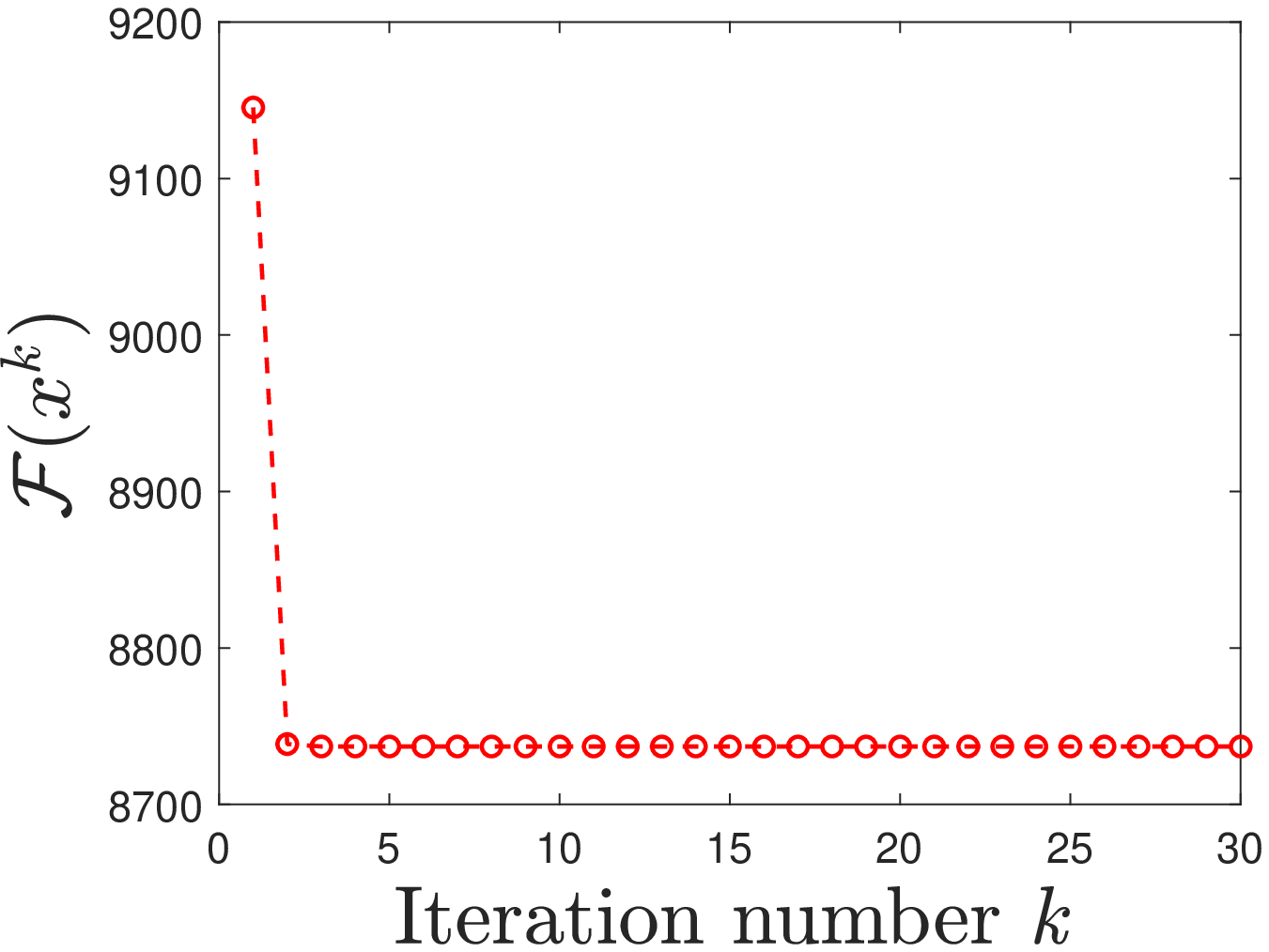}}
			\centerline{(1a) Squares}
			\vspace{10pt}
			\centering
			\centerline{\includegraphics[width=4.8cm]{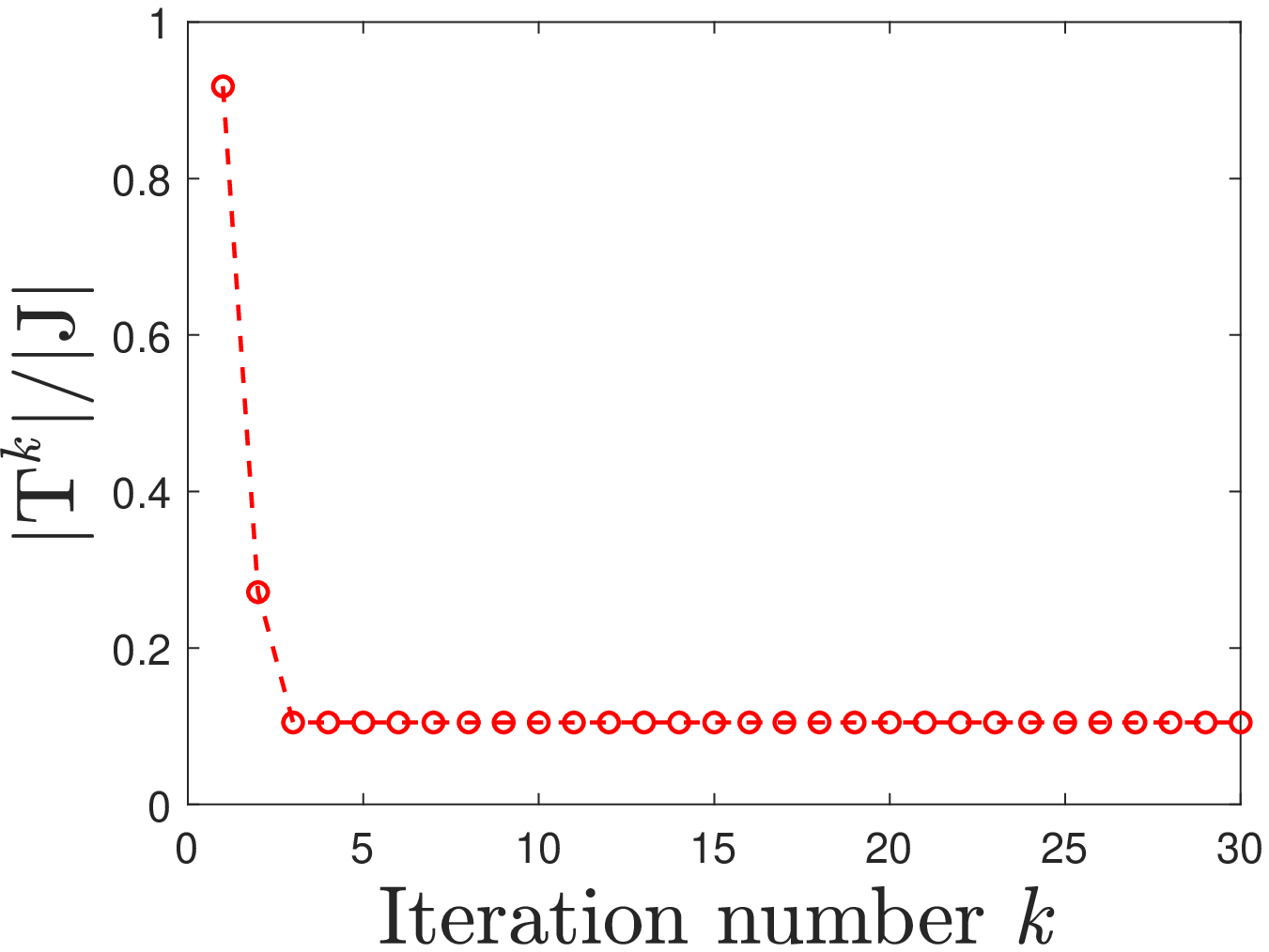}}
			\centerline{(1b) Squares}
		\end{minipage}
	}
	\hspace{50pt}
	{
		\begin{minipage}[t]{0.2\linewidth}
			\centering
			\centerline{\includegraphics[width=4.8cm]{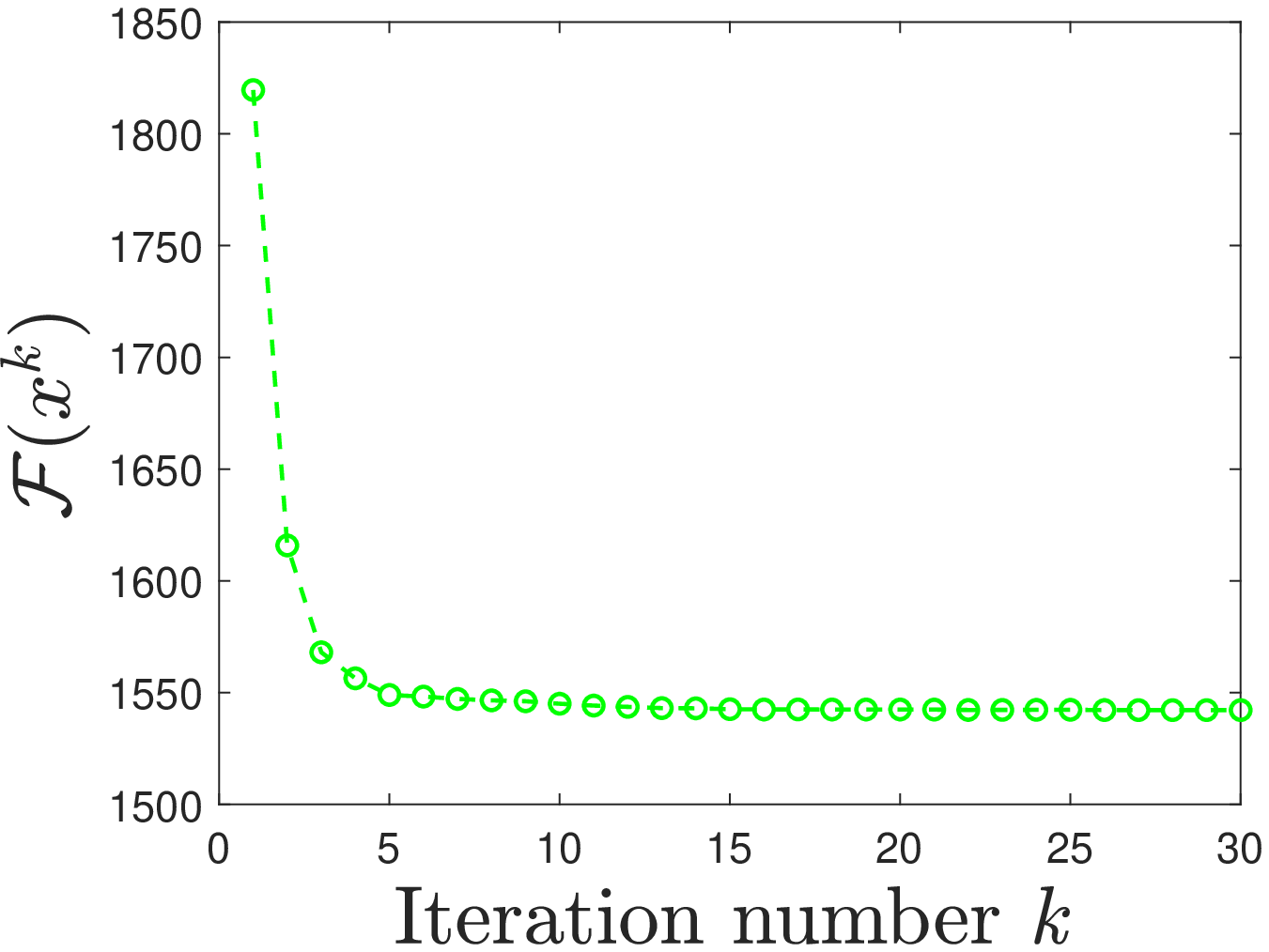}}
			\centerline{(2a) Twocircles}
			\vspace{10pt}
			\centering
			\centerline{\includegraphics[width=4.8cm]{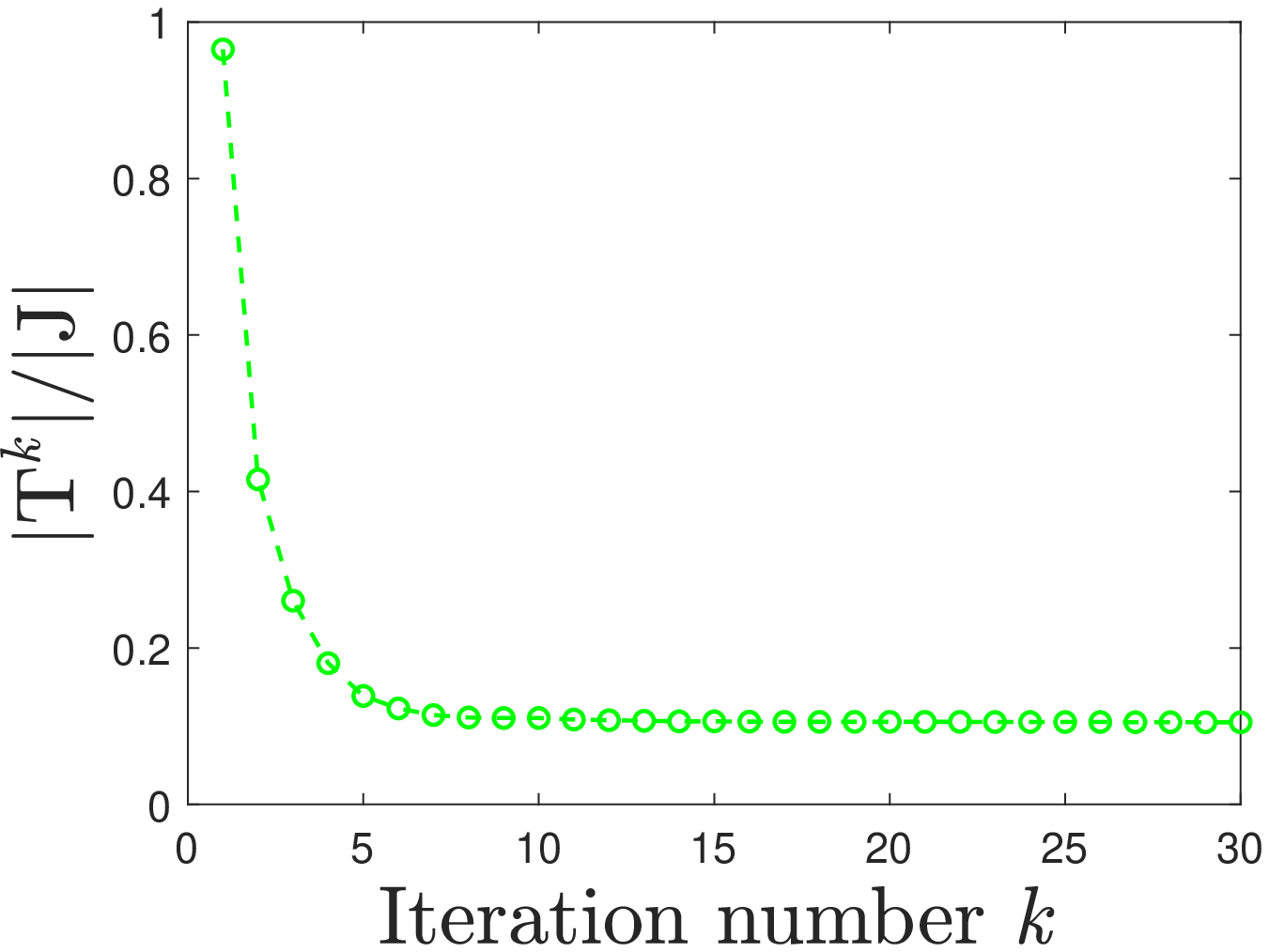}}
			\centerline{(2b) Twocircles}
		\end{minipage}
	}
	\hspace{50pt}
	{
		\begin{minipage}[t]{0.2\linewidth}
			\centering
			\centerline{\includegraphics[width=4.8cm]{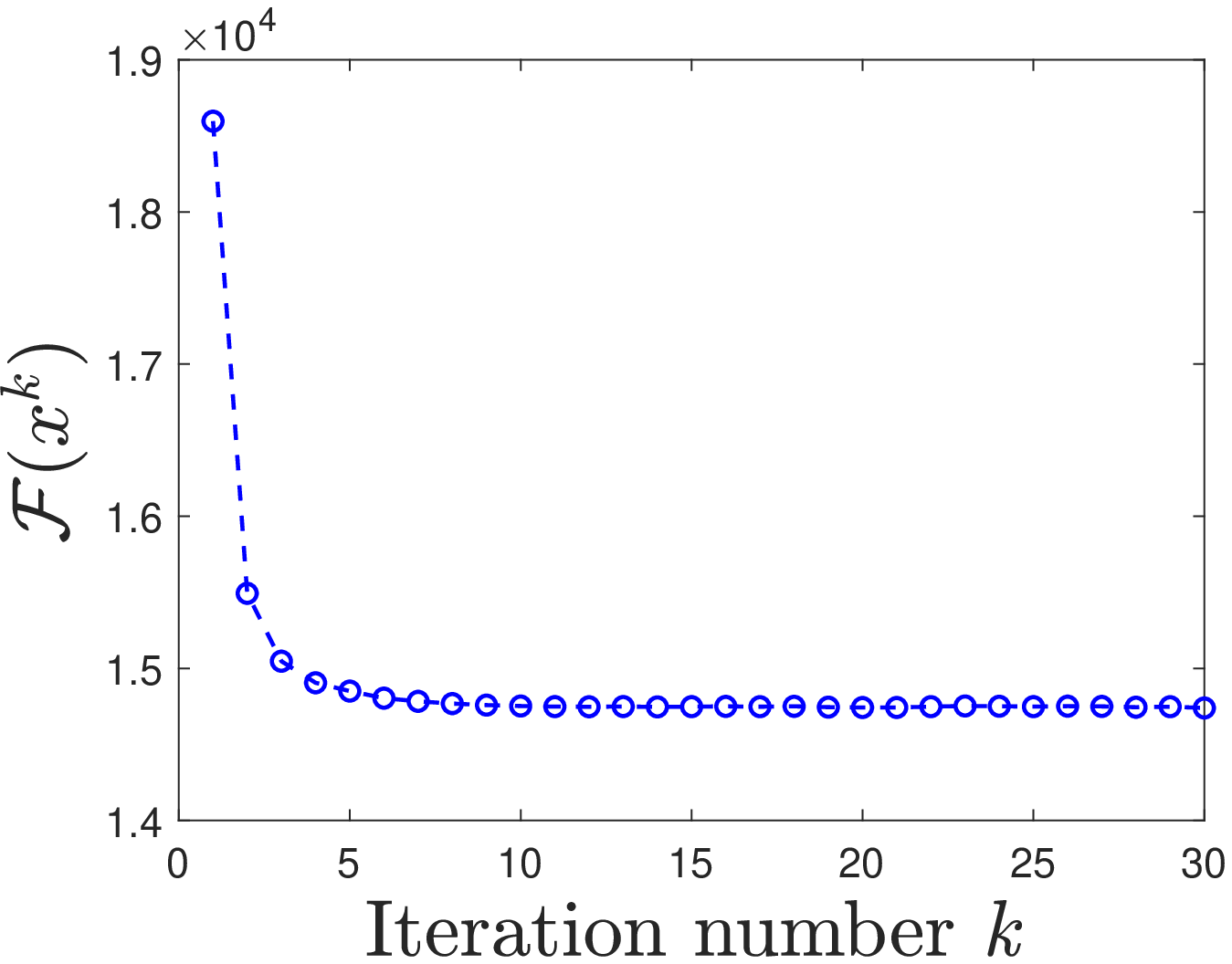}}
			\centerline{(3a) Cameraman}
			\vspace{10pt}
			\centering
			\centerline{\includegraphics[width=4.8cm]{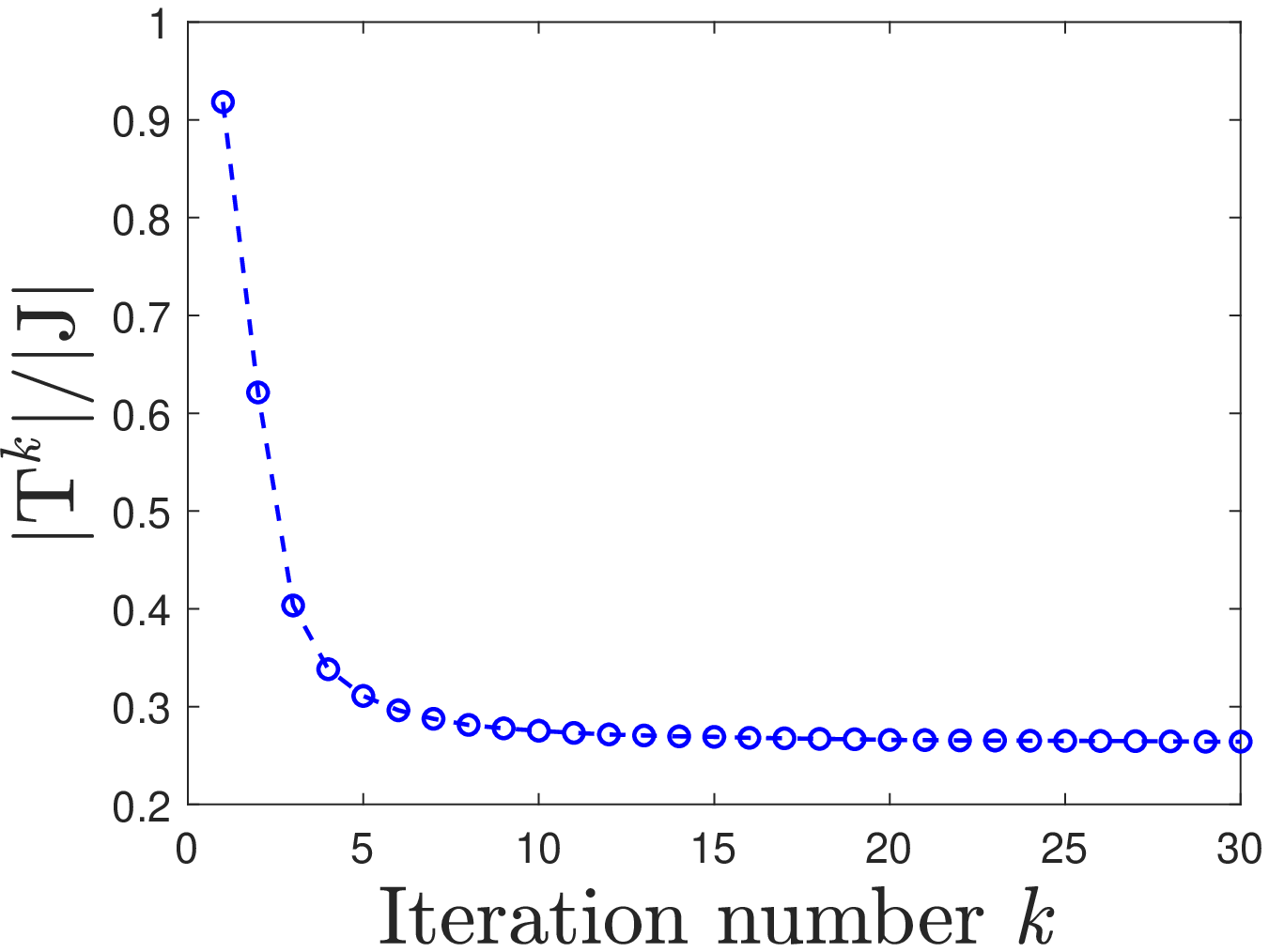}}
			\centerline{(3b) Cameraman}
		\end{minipage}
	}
	\caption{
		The behavior of objective function values and support sizes of the Inexact ITSS-PL-$\ell_{2}$ for Gaussian noise removal.
			(1a)(1b): results of the "Squares" corrupted by the average blur; (2a)(2b): results of the "Twocircles" corrupted by the Gaussian blur; (3a)(3b): results of the "Cameraman" corrupted by the disk blur.
			(1a)(2a)(3a): the objective function value $\mathcal{F}(x^k)$ versus the outer iteration number $k$; 
			(1b)(2b)(3b): the percentage of cardinality of the support set $|\rT^k|\backslash|\rJ|$ versus the outer iteration number $k$.	
	}
	\label{L2convergesupp}
\end{figure}

\subsection{Restoration performance}\label{sec:restoration}

In this subsection, we test the performance of $\ell_{q}\text{aTV}^p$ ($q=1,2$) model and the Inexact ITSS-PL in image restoration.
Figure \ref{restoration} shows four restoration results by our methods.
The performance of
the $\ell_{1}\text{aTV}^p$ and $\ell_{2}\text{aTV}^p$ are then compared to 
the $\ell_{1}\text{aTV}$ and $\ell_{2}\text{aTV}$, respectively.
The $\ell_{q}\text{aTV}(q=1,2)$ model for comparison is also solved by the ADMM with the same algorithm parameters used in the $\ell_{q}\text{aTV}$ model for initializing the $\ell_{q}\text{aTV}^p(q=1,2)$ model.
The PSNR values of the results recovered by different methods are illustrated in Table \ref{L1TVp} and Table \ref{L2TVp}.
It can be seen that in both cases of impulse noise and Gaussian noise, the $\ell_{q}\text{aTV}^p$ ($q=1,2$) model performs much better than the $\ell_{q}\text{aTV}$ ($q=1,2$) model on piecewise constant images, while the $\ell_{q}\text{aTV}$ model outperforms the $\ell_{}\text{aTV}^p$ model on the natural image.

\begin{table}[htbp]
	\centering
	\footnotesize{
	\begin{tabular}{|c|c|c|c|c|c|c|c|}
		\hline
		\multirow{2}{*}{Images}     & \multirow{2}{*}{} & \multicolumn{2}{c|}{average blur} & \multicolumn{2}{c|}{gaussian blur} & \multicolumn{2}{c|}{disk blur} \\ \cline{3-8}
		&     &$\ell_{1}\text{aTV}$   & $\ell_{1}\text{aTV}^p$   & $\ell_{1}\text{aTV}$    & $\ell_{1}\text{aTV}^p$   & $\ell_{1}\text{aTV}$  & $\ell_{1}\text{aTV}^p$    \\ \hline
		\multirow{2}{*}{Phantom}    & PSNR(dB)           & 39.89        & \textbf{44.07}          & 37.21         & \textbf{56.69}          & 99.56       & \textbf{156.96}       \\
		& $\beta$          & 25             & 12               & 500             & 140              & 100           & 25             \\
		\hline
		\multirow{2}{*}{Twocircles} & PSNR(dB)           & 28.96        & \textbf{35.37}          & 25.05         & \textbf{36.87}          & 63.51       & \textbf{216.01}       \\
		& $\beta$          & 35             & 30               & 700             & 650              & 160           & 75             \\
		\hline
		\multirow{2}{*}{Squares}        & PSNR(dB)           & 47.70        & \textbf{48.86}          & 91.70         & \textbf{150.23}         & 91.89       & \textbf{263.52}       \\
		& $\beta$          & 15             & 9                & 750             & 30               & 45            & 60             \\
		\hline
		\multirow{2}{*}{Text}       & PSNR(dB)           & 27.42        & \textbf{30.97}          & 24.29         & \textbf{28.34}          & 40.98       & \textbf{188.52}       \\
		& $\beta$          & 40             & 35               & 800             & 800              & 140           & 85             \\
		\hline
		\multirow{2}{*}{Cameraman}  & PSNR(dB)           & \textbf{29.51}        & 27.97          & \textbf{28.07}         & 27.74          & \textbf{30.04}       & 29.11        \\
		& $\beta$          & 30             & 25               & 550             & 700              & 130           & 160            \\
		\hline
	\end{tabular}
}
	\caption{
		Restoration from blurry images with $30\%$ salt-and-pepper impulse noise.}
	\label{L1TVp}
\end{table}

\begin{table}[htbp]
		\centering
		\footnotesize{
		\begin{tabular}{|c|c|c|c|c|c|c|c|}
			\hline
			\multirow{2}{*}{Images}     & \multirow{2}{*}{} & \multicolumn{2}{c|}{average blur} & \multicolumn{2}{c|}{gaussian blur} & \multicolumn{2}{c|}{disk blur}                                \\ \cline{3-8}
			&  & $\ell_{2}\text{aTV}$   & $\ell_{2}\text{aTV}^p$   & $\ell_{2}\text{aTV}$    & $\ell_{2}\text{aTV}^p$   & $\ell_{2}\text{aTV}$   & $\ell_{2}\text{aTV}^p$          \\ \hline
			\multirow{2}{*}{Phantom}    & PSNR(dB)           & 56.89        & \textbf{73.76}          & 28.44        & \textbf{34.57}          & 45.21                       & \textbf{51.99}                       \\
			& $\beta$       & $7\times10^3$    & $2\times10^4$    & $3\times10^5$    & $4\times10^5$    & $4\times10^4$    & $5\times10^4$ \\
			\hline
			\multirow{2}{*}{Twocircles} & PSNR(dB)           & 46.79        & \textbf{69.72}          & 20.34        & \textbf{23.47}          & 36.39                       & \textbf{45.34}                       \\
			& $\beta$       & $2\times10^4$    & $2\times10^4$    & $6\times10^5$    & $5\times10^5$    & $8\times10^4$    & $9\times10^4$ \\
			\hline
			\multirow{2}{*}{Squares}        & PSNR(dB)           & 57.67        & \textbf{71.83}          & 43.47         & \textbf{53.03}          & 42.08                       & \textbf{60.83}                       \\
			& $\beta$       & $8\times10^3$    & $7\times10^3$    & $5\times10^4$    & $6\times10^4$    & $4\times10^4$    & $7\times10^4$ \\
			\hline
			\multirow{2}{*}{Text}       & PSNR(dB)           & 49.00        & \textbf{70.60}          & 21.75         & \textbf{23.42}          & 39.63                       & \textbf{61.41}                       \\
			& $\beta$       & $2\times10^4$    & $2\times10^4$    & $5\times10^5$    & $7\times10^5$    & $8\times10^4$    & $3\times10^4$ \\
			\hline
			\multirow{2}{*}{Cameraman}  & PSNR(dB)           & \textbf{35.48}        & 34.80          & \textbf{25.63 }        & 25.11          & \textbf{31.17}                       & 30.60                       \\
			& $\beta$       & $8\times10^4$    & $2\times10^5$    & $2\times10^5$    & $3\times10^5$    & $1\times10^5$    & $2\times10^5$ \\
			\hline
		\end{tabular}
}
	\caption{
		Restoration from blurry images with 0 mean and $10^{-6}$ variance Gaussian noise.}
	\label{L2TVp}
\end{table}

\begin{figure}[htbp]
	\centering
	{
		\begin{minipage}[t]{0.2\linewidth}
			\centering
			\centerline{\includegraphics[width=3.2cm]{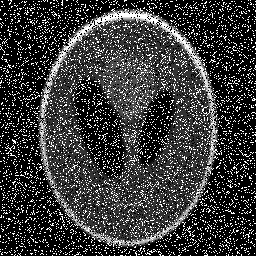}}
			\centerline{(1a) average blur}
			\vspace{10pt}
			\centering
			\centerline{\includegraphics[width=3.2cm]{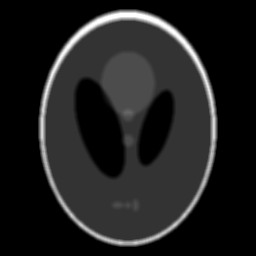}}
			\centerline{(2a) average blur}
		\end{minipage}
	}
	\hspace{5pt}
	{
		\begin{minipage}[t]{0.2\linewidth}
			\centering
			\centerline{\includegraphics[width=3.2cm]{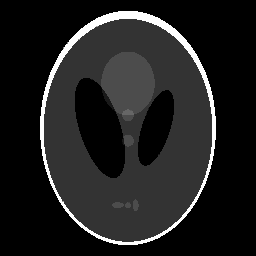}}
			\centerline{(1b) recovery of (1a)}
			\vspace{10pt}
			\centering
			\centerline{\includegraphics[width=3.2cm]{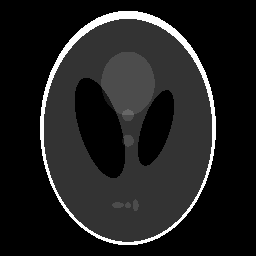}}
			\centerline{(2b) recovery of (2a)}
		\end{minipage}
	}
	\hspace{25pt}
	{
		\begin{minipage}[t]{0.2\linewidth}
			\centering
			\centerline{\includegraphics[width=3.2cm]{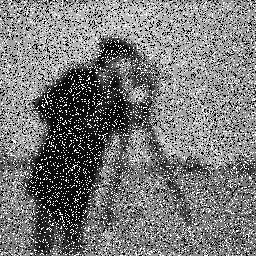}}
			\centerline{(1c) disk blur}
			\vspace{10pt}
			\centering
			\centerline{\includegraphics[width=3.2cm]{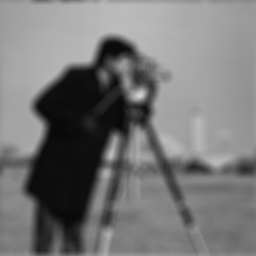}}
			\centerline{(2c) disk blur}
		\end{minipage}
	}
	\hspace{5pt}
	{
		\begin{minipage}[t]{0.2\linewidth}
			\centering
			\centerline{\includegraphics[width=3.2cm]{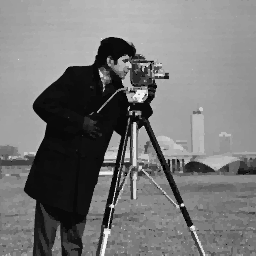}}
			\centerline{(1d) recovery of (1c)}
			\vspace{10pt}
			\centering
			\centerline{\includegraphics[width=3.2cm]{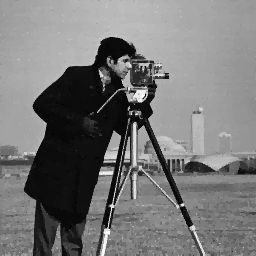}}
			\centerline{(2d) recovery of (2c)}
		\end{minipage}
	}
	\caption{
			Image restoration by ITSS-PL-$\ell_1$ and ITSS-PL-$\ell_2$. (1a)(1c): The test images degraded by different types of blur kernels and the salt-and-pepper impulse noise with noise level 30$\%$. (2a)(2c): The test images degraded by different types of blur kernels and the i.i.d. Gaussian white noise with variance $10^{-6}$. (1b)(1d): The recovered images by the $\ell_{1}\text{aTV}^{p}$ model. (2b)(2d): The recovered images by the $\ell_{2}\text{aTV}^{p}$ model.
	}
	\label{restoration}
\end{figure}

\subsection{Applications in Image Segmentation}

Our proposed algorithm can also be applied in the two-stage image segmentation method.
Given a blurry and noisy image, the segmentation problem is to partition the image into several regions based on the image intensity.
The two-stage image segmentation method (\cite{cai2013segmentation}) includes two major steps: Stage One is to get a piecewise constant approximation of the observation; Stage Two is a simple thresholding operation applied to the approximation  
and to obtain the segmentation result.
We apply the $\ell_q\text{aTV}^p$ model, and the $\ell_q\text{aTV}$ model as a comparison, at the first stage of finding the approximations.
At the second stage, we use the same segmentation method proposed in \cite{cai2013segmentation}.

In the experiment of image segmentation, we use two piece-wise constant test images shown in Figure \ref{segorigin}, which ground truth of segmentation can be obtained by MATLAB function \textit{tabulate}, and thus convenient for the quantitive comparision defined later.
We use three different blurring kernels: (1) average blur  (fspecial('average',9)); 
(2) Gaussian blur  
(fspecial('gaussian',[15,15],10)); 
(3) disk blur (fspecial('disk',6)).
Two types of noises are considered:
(1) $40\%$ salt-and-pepper impulse noise; (2) Gaussian noise with mean 0 variance $10^{-6}$.
At the first stage of image restoration, 
the $\ell_1\text{aTV}^p$ model with $p=0.5$ and the $\ell_1\text{aTV}$ model, as a comparison, are used to obtain a piecewise constant approximation of the observation degraded by the impulse noise; the $\ell_2\text{aTV}^p$ model with $p=0.5$ and the $\ell_2\text{aTV}$ model are used to get an approximation of the images degraded by the Gaussian noise.
All the algorithm parameters for solving the $\ell_q\text{aTV}^p$ model and the $\ell_q\text{aTV}$ model ($q=1,2$) are the same to those used in the experiments of image deconvolution in Section \ref{subsec:parameters}.
The $Jaccard$ $Similarity$ (denoted as JS, \cite{jaccard1912the}) value is used to evaluate the segmentation results, which is defined as follows:
\begin{equation}
\text{JS}(S_{\text{GT}}^{\text{phase}},S_{\text{Alg}}^{\text{phase}}):=\frac{|S_{\text{GT}}^{\text{phase}}\cap S_{\text{Alg}}^{\text{phase}}|}{|S_{\text{GT}}^{\text{phase}}\cup S_{\text{Alg}}^{\text{phase}}|},
\end{equation}
where $S_{\text{GT}}^{\text{phase}}$ and $S_{\text{Alg}}^{\text{phase}}$ denote the region of one certain phase in the ground truth and the region in the segmentation result, $|\cdot|$ denotes the area of a region. Clearly,
the higher the JS value, the better the corresponding segmentation result.

\begin{figure}[htbp]
	\centering
	\subfigure[Geometry]
	{
		\includegraphics[width=3.5cm]{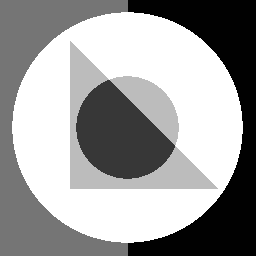}
	}
	\quad
	\subfigure[Phantom]
	{
		\includegraphics[width=3.5cm]{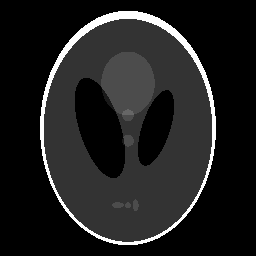}
	}
	\quad
	\caption{
		Test images for segmentation. (a): 5-phase Geometry ($256\times 256$); (b): 6-phase Phantom ($256\times 256$).}
	\label{segorigin}
\end{figure}

For the case of impulse noise and Gaussian noise, the segmentation results 
are shown in Table \ref{L1JSvalue} and Table \ref{L2JSvalue}, respectively. For a visual illustration, one may check Figure \ref{L1seg} and Figure \ref{L2seg}.
The results in Table \ref{L1JSvalue} and Table \ref{L2JSvalue} indicate that the performances of our approach is better than those of the $\ell_{q}\text{aTV}$ ($q=1,2$) model.
In particular, our method performs a little bit better on phases with large area than the $\ell_q\text{aTV}$ model does, and much more better on phases of small areas, e.g., phase 5 in "Geometry" and phase 5, 6 in "Phantom".
	This can also be observed from the sample regions within the yellow boxes shown in (2b) (2c) and (2d) of Figure \ref{L1seg} or Figure \ref{L2seg}, which reveals the advantage of the $\ell_q\text{aTV}^p$ model on dealing with the adjacent phases of small areas  and similar intensities.

\begin{figure}[htbp]
	\centering
	\begin{minipage}[t]{0.18\linewidth}
		\centering
		\centerline{\includegraphics[width=3.2cm]{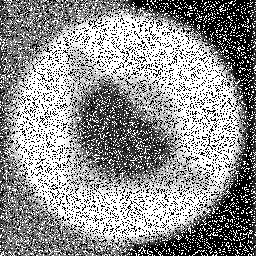}}
		\centerline{(1a) corrupted image}
		\vspace{10pt}
		\centering
		\centerline{\includegraphics[width=3.2cm]{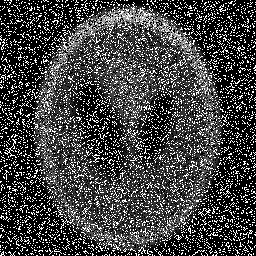}}
		\centerline{(2a) corrupted image}
	\end{minipage}
	\hspace{20pt}
	\begin{minipage}[t]{0.18\linewidth}
		\centering
		\centerline{\includegraphics[width=3.2cm]{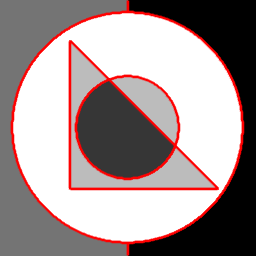}}
		\centerline{(1b) ground truth}
		\vspace{10pt}
		\centering
		\centerline{\includegraphics[width=3.2cm]{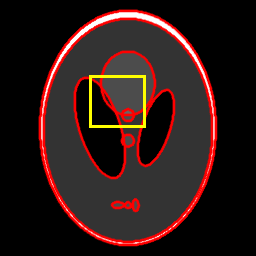}}
		\centerline{(2b) ground truth}
	\end{minipage}
	\hspace{20pt}
	\begin{minipage}[t]{0.18\linewidth}
		\centering
		\centerline{\includegraphics[width=3.2cm]{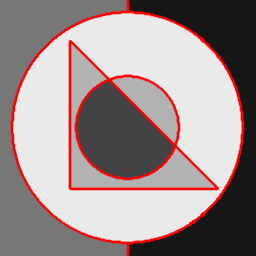}}
		\centerline{(1c) $\ell_{1}\text{aTV}$}
		\vspace{10pt}
		\centering
		\centerline{\includegraphics[width=3.2cm]{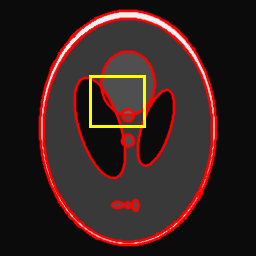}}
		\centerline{(2c) $\ell_{1}\text{aTV}$}
	\end{minipage}
	\hspace{20pt}
	\begin{minipage}[t]{0.18\linewidth}
		\centering
		\centerline{\includegraphics[width=3.2cm]{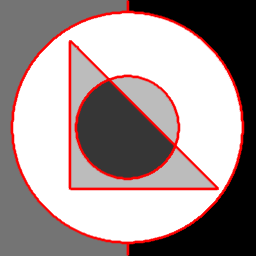}}
		\centerline{(1d) $\ell_{1}\text{aTV}^{p}$}
		\vspace{10pt}
		\centering
		\centerline{\includegraphics[width=3.2cm]{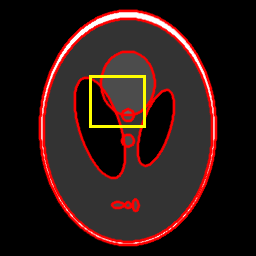}}
		\centerline{(2d) $\ell_{1}\text{aTV}^{p}$}
	\end{minipage}
	\caption{
			(1a) (2a): images corrupted by Gaussian blur and salt-and-pepper noise with level 40$\%$;
			(1b) to (1d) and (2b) to (2d): segmentation results of groud truth and various methods.
	}
	\label{L1seg}
\end{figure}

\begin{table}[htbp]
	\centering
	\footnotesize{
		\begin{tabular}{|c|c|c|c|c|c|c|c|c|}
			\hline
			\multirow{2}{*}{Images}    & \multicolumn{2}{c|}{\multirow{2}{*}{}}  & \multicolumn{2}{c|}{average blur} & \multicolumn{2}{c|}{gaussian blur} & \multicolumn{2}{c|}{disk blur} \\ \cline{4-9}
			& \multicolumn{2}{c|}{}  & $\ell_{1}\text{aTV}$  & $\ell_{1}\text{aTV}^p$  & $\ell_{1}\text{aTV}$  & $\ell_{1}\text{aTV}^{p}$ & $\ell_{1}\text{aTV}$  & $\ell_{1}\text{aTV}^p$  \\ \hline
			\multirow{6}{*}{Geometry}  & \multirow{5}{*}{JS} & phase 1(43.667\%) & 0.99951       & \textbf{0.99990}         & 0.99969        & \textbf{0.99997}         & \textbf{1.00000}      & \textbf{1.00000}      \\ \cline{3-9}
			&                     & phase 2(18.370\%) & 0.99859       & \textbf{0.99983}        & 0.99875        & \textbf{1.00000}         & \textbf{1.00000}      & \textbf{1.00000}      \\ \cline{3-9}
			&                     & phase 3(18.018\%) & \textbf{0.99958}       & \textbf{0.99958}        & 0.99983        & \textbf{1.00000}        & \textbf{1.00000}    & \textbf{1.00000}       \\ \cline{3-9}
			&                     & phase 4(10.487\%) & 0.99811       & \textbf{0.99971}        & 0.99855        & \textbf{0.99986}        & \textbf{1.00000}      & \textbf{1.00000}       \\ \cline{3-9}
			&                     & phase 5( 9.459\%) & 0.99759       & \textbf{1.00000}        & 0.99743        & \textbf{1.00000}         & \textbf{1.00000}      & \textbf{1.00000}       \\ \cline{2-9}
			& \multicolumn{2}{c|}{$\beta$}             & 55             & 65               & 250             & 350              & 110           & 45             \\
			\hline
			\multirow{7}{*}{Phantom}   & \multirow{6}{*}{JS} & phase 1(58.177\%) & 0.99874       & \textbf{0.99997}         & 0.99782        & \textbf{1.00000}        & 0.99990      & \textbf{1.00000}      \\ \cline{3-9}
			&                     & phase 2(32.927\%) & 0.99722       & \textbf{0.99995}         & 0.99579        & \textbf{1.00000}         & 0.99944      & \textbf{1.00000}       \\ \cline{3-9}
			&                     & phase 3( 4.343\%) & 0.99789       & \textbf{1.00000}         & 0.99824        & \textbf{1.00000}         & \textbf{1.00000}      & \textbf{1.00000}       \\ \cline{3-9}
			&                     & phase 4( 4.335\%) & 0.98577       & \textbf{1.00000}         & 0.97965        & \textbf{1.00000}         & 0.99649      & \textbf{1.00000}       \\ \cline{3-9}
			&                     & phase 5( 0.139\%) & 0.68421       & \textbf{0.97850}         & 0.61074        & \textbf{1.00000}         & 0.97850      & \textbf{1.00000}       \\ \cline{3-9}
			&                     & phase 6( 0.079\%) & 0.75362       & \textbf{1.00000}         & 0.72222        & \textbf{1.00000}         & \textbf{1.00000}      & \textbf{1.00000}       \\ \cline{2-9}
			& \multicolumn{2}{c|}{$\beta$}             & 80            & 40               & 190             & 70              & 110           & 40             \\
			\hline
		\end{tabular}
	}
	\caption{
		JS values of segmentation results for Geometry and Phantom corrupted by $40\%$ impulse noise. The percentages in the brackets are the ratios of the phase areas to the whole image in the ground truth.}
	\label{L1JSvalue}
\end{table}

\begin{figure}[htbp]
	\centering
	\begin{minipage}[t]{0.18\columnwidth}
		\centering
		\centerline{\includegraphics[width=3.2cm]{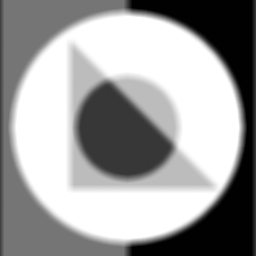}}
		\centerline{(1a) corrupted image}
		\vspace{10pt}
		\centering
		\centerline{\includegraphics[width=3.2cm]{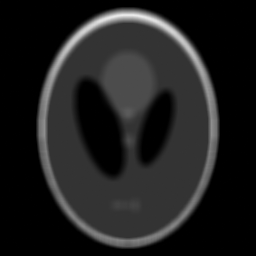}}
		\centerline{(2a) corrupted image}
	\end{minipage}
	\hspace{20pt}
	\begin{minipage}[t]{0.18\columnwidth}
		\centering
		\centerline{\includegraphics[width=3.2cm]{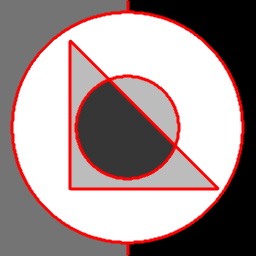}}
		\centerline{(1b) ground truth}
		\vspace{10pt}
		\centering		
		\centerline{\includegraphics[width=3.2cm]{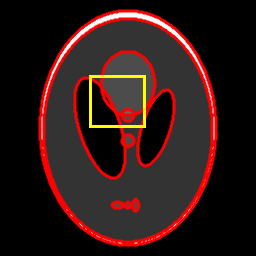}}
		\centerline{(2b) ground truth}
	\end{minipage}
	\hspace{20pt}
	\begin{minipage}[t]{0.18\columnwidth}
		\centering
		\centerline{\includegraphics[width=3.2cm]{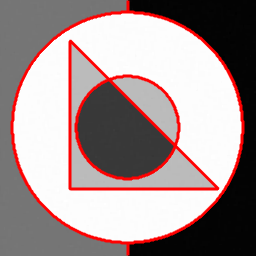}}
		\centerline{(1c) $\ell_{2}\text{aTV}$}
		\vspace{10pt}
		\centering
		\centerline{\includegraphics[width=3.2cm]{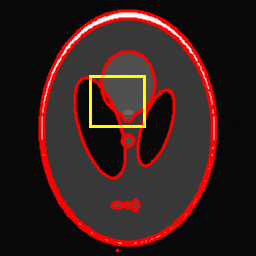}}
		\centerline{(2c) $\ell_{2}\text{aTV}$}
	\end{minipage}
	\hspace{20pt}
	\begin{minipage}[t]{0.18\columnwidth}
		\centering
		\centerline{\includegraphics[width=3.2cm]{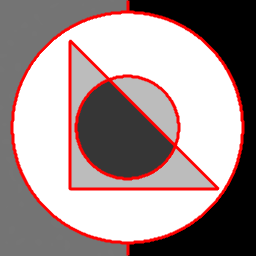}}
		\centerline{(1d) $\ell_{2}\text{aTV}^{p}$}
		\vspace{10pt}
		\centering
		\centerline{\includegraphics[width=3.2cm]{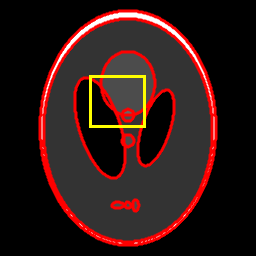}}
		\centerline{(2d) $\ell_{2}\text{aTV}^{p}$}
	\end{minipage}
	\caption{
			(1a)(2a): images corrupted by average blur and Gaussian noise with $0$ mean and $10^{-6}$ variance;
			(1b) to (1d) and (2b) to (2d): segmentation results of ground truth and various methods.
	}
	\label{L2seg}
\end{figure}

\begin{table}[htbp]
	\centering
	\footnotesize{
		\begin{tabular}{|c|c|c|c|c|c|c|c|c|}
			\hline
			\multirow{2}{*}{Images}    & \multicolumn{2}{c|}{\multirow{2}{*}{}}  & \multicolumn{2}{c|}{average blur} & \multicolumn{2}{c|}{gaussian blur} & \multicolumn{2}{c|}{disk blur} \\ \cline{4-9}
			& \multicolumn{2}{c|}{}  & $\ell_{2}\text{aTV}$  & $\ell_{2}\text{aTV}^p$   & $\ell_{2}\text{aTV}$  & $\ell_{2}\text{aTV}^p$  & $\ell_{2}\text{aTV}$  & $\ell_{2}\text{aTV}^p$    \\ \hline
			\multirow{6}{*}{Geometry}  & \multirow{5}{*}{JS} & phase 1(43.666\%) & \textbf{1.00000}       & \textbf{1.00000}        & 0.99710        & \textbf{0.99951}         & 0.99990      & \textbf{0.99993}       \\ \cline{3-9}
			&                     & phase 2(18.370\%) & \textbf{1.00000}       & \textbf{1.00000}         & 0.99552        & \textbf{1.00000}         & 0.99975      & \textbf{1.00000}       \\ \cline{3-9}
			&                     & phase 3(18.018\%) & \textbf{1.00000}       & \textbf{1.00000}         & 0.99460        & \textbf{1.00000}         & 0.99975      & \textbf{0.99983}       \\ \cline{3-9}
			&                     & phase 4(10.487\%) & \textbf{1.00000}       & \textbf{1.00000}         & 0.98349        & \textbf{0.99797}         & 0.99942      & \textbf{0.99956}       \\ \cline{3-9}
			&                     & phase 5( 9.459\%) & \textbf{1.00000}       & \textbf{1.00000}         & 0.98720        & \textbf{1.00000}         & 0.99919      & \textbf{0.99984}       \\ \cline{2-9}
			& \multicolumn{2}{c|}{$\beta$}             & $3.6\times10^4$  & $4.4\times10^4$    & $1.72\times10^5$   & $8.3\times10^4$    & $6.9\times10^4$     & $8.4\times10^4$   \\ 
			\hline
			\multirow{7}{*}{Phantom}   & \multirow{6}{*}{JS} & phase 1(58.177\%) & 0.99835       & \textbf{0.99995}         & 0.99040        & \textbf{0.99903}         & 0.99725      & \textbf{0.99976}       \\ \cline{3-9}
			&                     & phase 2(32.927\%) & 0.99773       & \textbf{0.99968}         & 0.97888        & \textbf{0.99519}         & 0.99412      & \textbf{0.99861}       \\ \cline{3-9}
			&                     & phase 3( 4.343\%) & 0.93008       & \textbf{1.00000}         & 0.99684        & \textbf{1.00000}         & \textbf{1.00000}      & \textbf{1.00000}       \\ \cline{3-9}
			&                     & phase 4( 4.335\%) & 0.96789       & \textbf{0.99789}         & 0.90039        & \textbf{0.96983}         & 0.95959      & \textbf{0.98951}       \\ \cline{3-9}
			&                     & phase 5( 0.139\%) & 0.55128       & \textbf{0.96703}         & 0.15024        & \textbf{0.60684}         & 0.37173      & \textbf{0.84694}       \\ \cline{3-9}
			&                     & phase 6( 0.079\%) & 0.00000       & \textbf{1.00000}         & 0.40909       & \textbf{0.88889}        & 0.00000      & \textbf{0.92593}       \\ \cline{2-9}
			& \multicolumn{2}{c|}{$\beta$}             & $5.8\times10^4$  & $2.3\times10^4$    & $1.45\times10^5$   & $6.4\times10^4$   & $6.2\times10^4$   & $3.5\times10^4$       \\ 
			\hline
		\end{tabular}
	}
	\caption{
		JS values of segmentation results for Geometry and Phantom corrupted by Gaussian noise with mean $0$ and variance $10^{-6}$. The percentages in the brackets are the ratios of the phase areas to the whole image in the ground truth.}
	\label{L2JSvalue}
\end{table}

          \section{Conclusions}\label{sec:con}
          
          In this paper, we studied a non-Lipschitz restoration model with anisotropic regularization and the $\ell_q,q\in[1,+\infty)$ fidelity.
We proved the lower bound theory for the case of $q=1$, and for a general $q\in[1,+\infty)$, the support inclusion property was  derived.
For solving such a nonconvex and non-Lipschitz model, we proposed the inexact iterative thresholding and support shrinking algorithm with proximal linearization, which is shown to globally converge to a stationary point of the objective function. The proof techniques used in such analysis can be applied to other image models with anisotropic regularizations.
Numerical experiment on image deconvolution and two stage image segmentation also illustrate the advantages of the proposed algorithm in applications.

          \appendix
          \section*

\subsection{Subdifferential}\label{app:sub}
\begin{definition}~\cite[Definition 8.3]{rockafellar2009variational}
    Let $\omega:\RR^n \rightarrow \RR \cup \{+\infty\}$ be a proper lower semicontinuous function. The domain of $\omega$ is defined by
    $\text{dom} \omega := \{y\in \RR^n: \omega(y)<+\infty)\}$,
    \begin{enumerate}
        \item For each $y \in \text{dom } \omega$, the regular subdifferential of $\omega$ at $y$ is defined as:
               \[\widehat{\partial}\omega(y) :=\{y^\star \in \RR^n : \liminf_{\substack{z\to y\\ z\neq y}} \frac{\omega(z) - \omega(y) - \langle y^\star,z - y \rangle}{\lVert z - y\rVert} \geq 0 \}
              .\]
              If $y \notin \text{dom} \omega$, then $\hat{\partial}\omega(y) = \emptyset.$
        \item The limiting subdifferential of $\omega$ at $y$ is defined as:
              \[\partial \omega(y):=\{y^\star \in \RR^n : \exists y^k \rightarrow y, \omega(y^k)\rightarrow \omega(y), s^k\in \hat{\partial} \omega(y_k) \rightarrow y^\star \quad \text{as}\quad k\rightarrow +\infty\}.\]
        \item The horizon subdifferential of $\omega$ at $y$ is defined as:
        \begin{equation*}
        \begin{aligned}
        \partial^{\infty} \omega(y):= \{y^\star \in \RR^n : \exists \nu^k \rightarrow 0, \nu^k > 0, &\exists y^k \rightarrow y, \omega(y^k)\rightarrow \omega(y), \\
        &s^k\in \hat{\partial} \omega(y^k), \nu^k s^k \rightarrow y^\star \quad \text{as}\quad k\rightarrow +\infty\}.
        \end{aligned}
        \end{equation*}
    \end{enumerate}
\end{definition}
\begin{remark}
        A point $y$ is said to be a stationary point of $\omega$, if  $0\in \partial \omega(y)$.
\end{remark}

\subsection{Kurdyka-{\L}ojasiewicz function}
The definition for a proper lower semicontinuous function $f$ to have the Kurdyka-{\L}ojasiewicz (KL) property at $\bar{x}\in \text{dom} \partial f$ can be found in \cite[Definition 3.1]{Attouch2010Proximal}.
A proper lower semicontinuous function $f$ satisfying the KL property at all points in $\text{dom}\partial f$ is called a \textit{KL function}.
A large class of KL functions widely used in applications are given by functions definable in an o-minimal structure introduced in \cite{Lou1996Geometric}.
\begin{definition}~\cite[Definition 4.1]{Attouch2010Proximal}\label{def:o-minimal}
Let $\mathcal{O}=\{\mathcal{O}_{n}\}_{n\in N}$  such that each $\mathcal{O}_{n}$ is a collection of subsets of $\mathbb{R}^{n}$. The family $\mathcal{O}$ is an
o-minimal structure over $\mathbb{R}$, if it satisfies the following axioms:
\begin{itemize}
    \item[1.] Each $\mathcal{O}_{n}$ is a boolean algebra. Namely $\emptyset \in \mathcal{O}_n$ and for each $A,B\in \mathcal{O}_{n}, A\cup B, A\cap B,$ and $\mathbb{R}^{n}\setminus A$ belong to $\mathcal{O}_{n}$.
    \item[2.] For all $A\in \mathcal{O}_{n}, A\times R$ and $R\times A$ belong to $\mathcal{O}_{n+1}$.
    \item[3.] For all $A\in \mathcal{O}_{n+1}$, $\prod(A):=\{(x_1,...,x_n)\in \mathbb{R}^{n}\;|\; (x_1,...,x_n,x_{n+1})\in A\}$ belongs to $\mathcal{O}_{n}$.
    \item[4.] For all $i\neq j$ in $\{1,2,...,n\}$, $\{(x_1,...,x_n)\in \mathbb{R}^{n}\;|\; x_i=x_j\}$ belong to $\mathcal{O}_{n}$.
    \item[5.] The set $\{(x_1,x_2)\in \mathbb{R}^2\;|\;x_1<x_2\}$ belong to $\mathcal{O}_{2}$.
    \item[6.] The elements of $\mathcal{O}_{1}$ are exactly finite unions of intervals.
\end{itemize}
\end{definition}
We say that a set $A \subseteq \RR^n$ belongs to $\mathcal{O}$ if $A\in \mathcal{O}_{n}$. A map $\Psi: A \rightarrow \RR^m$ with $A\subseteq \RR^n$ is said to belong to $\mathcal{O}$ if its graph $\{(x,\Psi(x))|x\in\text{dom}\Psi\} \subseteq \RR^{n+m}$ belongs to $\mathcal{O}$. We say that sets and maps are definable in $\mathcal{O}$ if they belong to $\mathcal{O}$. 
Definable functions are defined like definable maps.
By \cite{Attouch2010Proximal}, the o-minimal structure has the properties that 1) the composition of definable functions is definable; 2) the finite sum of definable functions is definable.

A class of o-minimal structure is the log-exp structure given in~\cite[Example 2.5]{Lou1996Geometric}, by which the following functions are definable:
\begin{itemize}
  \item[1.] semi-algebraic functions~\cite[Definition 5]{bolte2014proximal}, including real polynomial functions.
  \item[2.]  $x^{r}: \RR\rightarrow \RR$ with $r\in\mathbb{R}$, which is given by
\[
	a \mapsto\begin{cases}
	a^r, & a>0\\
	0, &a\leq 0.
	\end{cases}
\]
  \item[3.] the exponential function: $\mathbb{R}\rightarrow \mathbb{R}$ defined by $x \mapsto e^{x}$ and the logarithm function: $(0, \infty) \rightarrow \mathbb{R}$ defined by $x \mapsto \text{log}(x)$.
\end{itemize}
It has been shown that any proper lower semicontinuous function that is definable in an o-minimal structure is a KL function (\cite{bolte2007clarke} and~\cite[Theorem 4.1]{Attouch2010Proximal}).
Then by the aforementioned properties and examples of definable functions, the objective functions $\mathcal{F}(x)$ in the examples of this paper are KL functions.

\bibliographystyle{elsarticle-num}
\bibliography{lowerbound_ref}

          \end{document}